\documentclass[a4paper,12pt]{amsart}

\usepackage{tikz}
\usetikzlibrary{positioning,arrows,cd}

\usepackage{amsmath,amssymb,amsthm}

\usepackage[shortlabels]{enumitem}

\usepackage{verbatim}

\usepackage{aliascnt}
\usepackage[pdfpagelabels]{hyperref}

\newtheorem{thmx}{Theorem}

\newaliascnt{corx}{thmx}

\aliascntresetthe{corx}

\newtheorem{theorem}{Theorem}[section]

\newaliascnt{proposition}{theorem}
\newtheorem{proposition}[proposition]{Proposition}
\aliascntresetthe{proposition}

\newaliascnt{lemma}{theorem}
\newtheorem{lemma}[lemma]{Lemma}
\aliascntresetthe{lemma}

\newaliascnt{corollary}{theorem}
\newtheorem{corollary}[corollary]{Corollary}
\aliascntresetthe{corollary}

\newaliascnt{question}{theorem}
	
\aliascntresetthe{question}

\newaliascnt{conjecture}{theorem}
\newtheorem{conjecture}[conjecture]{Conjecture}	
\aliascntresetthe{conjecture}

\theoremstyle{definition}
\newaliascnt{definition}{theorem}
\newtheorem{definition}[definition]{Definition}
\aliascntresetthe{definition}

\theoremstyle{remark}

\newaliascnt{example}{theorem}
\newtheorem{example}[example]{Example}	
\aliascntresetthe{example}

\newaliascnt{remark}{theorem}
\newtheorem{remark}[remark]{Remark}	
\aliascntresetthe{remark}

\def\equationautorefname~#1\null{Equation~(#1)\null}

\newcommand{\Aut}{\operatorname{Aut}}

\newcommand{\Z}{{\mathbb Z}}
\newcommand{\K}{{\mathbb K}}

\newcommand{\G}{\mathcal{G}}

\renewcommand{\b}{\operatorname{b}}
\newcommand{\rad}{\operatorname{rad}}

\makeatletter
\newcommand*{\udot}{{\mathpalette\ud@t\relax}} 
\newcommand*{\ud@t}[2]{%
   \sbox\z@{\m@th$#1.$}%
   \sbox\tw@{$#1:$}%
   \raise\dimexpr\ht\tw@-\ht\z@\relax\box\z@
}
\makeatother

\DeclareMathOperator{\lcm}{lcm}
\DeclareMathOperator{\evol}{Evol}
\DeclareMathOperator{\nevol}{NEvol}
\newcommand{\Evol}{\evol^\mathrm{nd}}
\newcommand{\NEvol}{\nevol^\mathrm{nd}}

\newcommand{\iso}{\cong}
\providecommand{\Cyc}{\mathcal{R}}

\title[On the Grothendieck ring of evolution algebras]
{On the Grothendieck Ring of Finite-Dimensional Non-Degenerate Evolution Algebras}

\author[I.\ Alshatnawi]{Idrees Alshatnawi}
\address{Departamento de \'Algebra, Geometr{\'\i}a y Topolog{\'\i}a, Universidad de M{\'a}laga, 29071 M{\'a}laga, Spain}
\email{idrees.kh.alshatnawi@uma.es} 

\author[C. Costoya]{Cristina Costoya}
\address{CITMAga, Departamento de Matem{á}ticas, Universidade de Santiago de Compostela, 15782-Santiago de Compostela, Spain.}
\email{cristina.costoya@usc.es}

\author[A.\ Viruel]{Antonio Viruel}
\address{Departamento de \'Algebra, Geometr{\'\i}a y Topolog{\'\i}a, Universidad de M{\'a}laga, 29071 M{\'a}laga, Spain}
\email{viruel@uma.es}
\thanks{This work was partially supported by MCIN/AEI/10.13039/501100011033 [PID2023-149804NB-I00 to C.C., and A.V].}

\begin{document}

\begin{abstract} We study the Grothendieck ring of finite-dimensional non-degenerate evolution algebras over a field $\K$, with addition induced by direct sum and multiplication induced by tensor product. Although its underlying abelian group is freely generated by indecomposable isomorphism classes, the ring structure is much smaller: tensor products create systematic non-cancellation phenomena and many zero-divisors. The key invariant is the balance of the directed graph associated with a natural basis. We prove that, for non-degenerate evolution algebras, this balance is independent of the chosen natural basis. We then show that balance controls cancellation after tensoring: indecomposable factors of balance $1$ cancel, whereas factors of larger balance produce zero-divisor relations under mild hypotheses. Finally, we analyze the subring generated by cyclic evolution algebras and prove the predicted zero-divisor criterion for all finite direct sums of cyclic algebras.\end{abstract}

\maketitle

\section{Introduction}
Evolution algebras form a class of commutative, generally non-associative algebras designed to encode non-Mendelian inheritance. Their defining feature is the existence of a natural basis $B=\{e_1,\ldots,e_n\}$ in which mixed products vanish, $e_i e_j=0$ for $i\ne j$. Consequently, the whole multiplication is encoded by the squares of the basis elements, or equivalently by a single structure matrix. This places evolution algebras at the intersection of non-associative algebra, weighted directed graphs, and Markov-chain dynamics \cite{TianVojtechovsky2006,Tian2008,ElduqueLabra2015,ElduqueLabra2019}.

A central problem in the theory is classification, usually understood as the determination of the isomorphism types of indecomposable evolution algebras. Although low-dimensional classifications are known \cite{CabreraSilesVelasco2017}, the case-by-case approach quickly becomes unfeasible. In fact, realization results show that every finite group occurs as the full automorphism group of a non-degenerate, finite-dimensional indecomposable evolution algebra \cite{CostoyaLigourasTocinoViruel2022, CostoyaMunozTocinoViruel2023}. Thus, the classification problem for evolution algebras is at least as complicated as the classification of finite groups, and cannot reasonably be expected to admit a simple complete invariant. This obstruction motivates the categorical viewpoint adopted in this paper: we study how evolution algebras combine, rather than trying to enumerate them one by one.

The category $\Evol_\K$ of finite-dimensional non-degenerate
evolution algebras over a field $\K$ is closed under two operations:
the direct sum $\oplus$, which is indeed the categorical product in $\Evol_\K$, and the tensor
product $\otimes$. Both are symmetric monoidal, and $\otimes$ is
distributive with respect to $\oplus$, see \autoref{sec:monoidal}. This is exactly the input needed
to form a \emph{Grothendieck ring} $\G(\Evol_\K)$, in which isomorphism
classes are added by $\oplus$ and multiplied by $\otimes$, see \autoref{sec:grothendieck}.

The pay-off is a change of perspective on what ``classification'' should
mean. As an abelian group, $(\G(\Evol_\K),+)$ is free on the isomorphism
classes of non-degenerate indecomposable algebras, so describing its \emph{additive} generators
is just the original, intractable problem in disguise. As a ring,
however, $\G(\Evol_\K)$ is ``smaller"; there exist indecomposable non-degenerate evolution algebras which are tensorially decomposable, see \autoref{sec:decomposition_tensor}, and moreover $\G(\Evol_\K)$ has many zero-divisors, see \autoref{thm:zero-divisors} below. Therefore, we propose to classify \emph{ring} generators of $\G(\Evol_\K)$: these are some of the additive generators but also exploit the multiplication, and
there are far fewer of them. 

Understanding $\G(\Evol_\K)$ as a ring, rather than merely as an abelian
group, reframes the classification of evolution algebras. A description of
its ring generators and of its ideal of zero-divisors would, in
particular, open the door to a \emph{probabilistic} classification (asking
for the typical behaviour of a random evolution algebra rather than a
complete list), and to the study of growth, that is, the asymptotics of the
number of indecomposables of a given dimension. 

We conclude this section by proving that the ring \(\G(\Evol_\K)\) contains zero-divisors. This introductory argument also serves as a preview of the methods developed in the sequel, where the relevant constructions and proofs are given in full detail. The central invariant in the arguments is the balance of the directed graph associated with a natural basis. While the graph itself may depend on the chosen basis, its balance does not: for non-degenerate evolution algebras it is an intrinsic invariant (\autoref{thm:balance_invariant}).

\begin{thmx}\label{thm:zero-divisors}
    Let $X$ be a finite dimensional non-degenerate evolution $\K$-algebra, and let
    $$X=\bigoplus_{j=1}^r X_j$$
    be its optimal direct-sum decomposition into indecomposable evolution ideals. Assume each $X_j$ has balance $$\b(X_j)>1.$$ Then $X$ gives rise to a zero-divisor in $\G(\Evol_\K)$, that is, there exist non-degenerate finite-dimensional evolution $\K$-algebras $A$ and $B$ such that $A\otimes X\cong B\otimes X$, while $A\not\cong B$.
\end{thmx}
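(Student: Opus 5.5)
Every argument below rests on the graph attached to the tensor product. Let $X$ have natural basis $\{e_i\}$ with directed graph $\Gamma_X$: there is an edge $i\to j$ when the coefficient of $e_j$ in $e_i^2$ is nonzero. Then $A\otimes X$ has natural basis $\{a_k\otimes e_i\}$, since
\[
(a_k\otimes e_i)^2 = a_k^2\otimes e_i^2 ,
\]
and its graph is the tensor (categorical) product $\Gamma_A\times\Gamma_X$. Non-degeneracy is preserved, because no product $a_k^2\otimes e_i^2$ vanishes.

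The plan is to exploit the classical fact about balanced digraphs. Let $\b(\Gamma)=n>1$ be the balance, that is, the gcd of (forward minus backward) edge counts along cycles of the underlying graph. A connected digraph of balance $n$ admits a level map $\ell:V\to\Z/n$ with every edge increasing $\ell$ by $1$. Consequently, the product $C_n\times\Gamma$ with the directed $n$-cycle $C_n$ splits into $n$ connected components, each isomorphic to $\Gamma$.

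The construction, first for a single indecomposable $X_j$ with $\b(X_j)=n_j$, is as follows. Let $C_{n}$ denote the cyclic evolution algebra with $c_k^2=c_{k+1}$ (indices mod $n$). I would show
\[
C_{n_j}\otimes X_j\cong X_j^{\oplus n_j}=\K^{n_j}\otimes X_j ,
\]
where $\K^{m}$ is the $m$-dimensional algebra with $f_k^2=f_k$. The graph statement above must be upgraded to an algebra isomorphism: send $f_k\otimes e_i \mapsto \lambda\, c_{k+\ell(i)}\otimes e_i$, with scalars $\lambda$ chosen to absorb the structure constants. Here I take $\ell$ from a natural basis realizing the balance; by \autoref{thm:balance_invariant} the choice does not matter. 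Scalars are not an issue since $c_k^2=c_{k+1}$ has coefficient $1$. So the map $\phi(c_{m}\otimes e_i)=f_{m-\ell(i)}\otimes e_i$ is checked directly: $(c_m\otimes e_i)^2=\sum_j \omega_{ij}\,c_{m+1}\otimes e_j$, and $\ell(j)=\ell(i)+1$ on edges, so the image matches $(f_{m-\ell(i)}\otimes e_i)^2$.

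For general $X=\bigoplus X_j$, let $N=\lcm_j n_j$, or more simply take $N=\prod_j n_j$ (any common multiple works). Each $n_j$ divides $N$, so the level map of $X_j$ composed with $\Z/n_j$ is incompatible with $\Z/N$. I therefore instead take $d_j=n_j$ and use $C_{n_j}$ for each $j$ separately. Observe that $C_{n}\otimes X_j\cong X_j^{\oplus n}$ whenever $n\mid n_j$, because the level map reduces mod $n$. Since all $n_j>1$, pick a prime $p_j\mid n_j$. Then set $A=\bigotimes$ / combinations so that the identity holds simultaneously. The cleanest choice is a prime $p$ dividing every $n_j$; without one, I take
\[
A=\bigoplus_j\Bigl(C_{p_j}\otimes \textstyle\bigotimes_{i\ne j}\K\Bigr)\quad\text{suitably weighted.}
\]
Concretely, I would use the product algebra $P=C_{p_1}\otimes\cdots\otimes C_{p_r}$. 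Its tensor with $X_j$ becomes, via the $j$-th factor, $(C_{p_1}\otimes\cdots\widehat{\ }\cdots)\otimes \K^{p_j}\otimes X_j$. That gives identities per summand but not a uniform one.

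The more robust route is to use additivity and prove a zero-divisor relation $(A-B)\cdot[X]=0$ where $A-B$ is a product of factors killing each $X_j$. Put $z_j=[C_{p_j}]-p_j[\K]$, so that $z_j[X_j]=0$. Then set $z=\prod_j z_j$. Since the ring is commutative, $z[X]=\sum_j z[X_j]=0$. Expanding $z$ into positive and negative parts gives $A\ne B$ with $A\otimes X\cong B\otimes X$; here I use that cancellation is allowed in $\G$ for isomorphism, by Krull–Schmidt for direct sums (the free additive structure). The main obstacle is showing $z\neq0$, i.e.\ $A\not\cong B$. I would count indecomposable summands: $C_{p_1}\otimes\cdots\otimes C_{p_r}$ has graph a product of cycles. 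Its component structure is determined by gcds, and for the single term with all $C$'s the number of components is $\prod p_j/\lcm(p_j)$, which differs from what the other terms contribute. More directly, I would compare the multiplicity of the indecomposable class $[C_{\lcm}]$ or of $[\K]$ on both sides using the binomial expansion. The signs alternate, so the extremal term (all $C_{p_j}$) contributes cycles of length $\lcm(p_j)>1$ appearing only with sign $(-1)^0$ and not cancelled by terms involving fewer $C$'s, whose cycle lengths are proper divisors.
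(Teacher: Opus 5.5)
Your Step 1 is correct, and it is more direct than the paper's route. The paper obtains $[X_j]([C_{p_j}]-p_j[\K])=0$ in three stages. First, the $p_j$-isomorphism of \autoref{ex:miso-cyclic} and \autoref{thm:cancel-Cn} give $C_{p_j}\otimes C_{p_j}\cong\K^{p_j}\otimes C_{p_j}$ relative to $C_{p_j}$. Second, the level map defines a local monomorphism $X_j\to C_{p_j}$, $e_i\mapsto c_{\ell(i)}$. Third, the relative isomorphism is pulled back along it (\autoref{thm:transfer}, \autoref{thm:balance-prime}). Unwinding that composite gives exactly your map $c_m\otimes e_i\mapsto f_{m-\ell(i)}\otimes e_i$. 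You replace the $m$-isomorphism machinery with a one-line check. The paper's detour buys a more general transfer statement: any relative isomorphism $A\otimes C_n\cong B\otimes C_n$ passes to $X_j$, not just the pair $C_p$, $\K^p$. Your combination step $z[X]=\sum_j z[X_j]=0$ is the same as the paper's.

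The gap is in the step you yourself flag as the main obstacle, $z\neq 0$. You take $z=\prod_{j=1}^r z_j$ over all $j$, so the primes $p_j$ may repeat. In that case your justification is false, because a term with fewer $C$-factors can have the same lcm as the extremal term. For instance, if $p_1=p_2=p$, then since $[C_p]^2=p[C_p]$,
\[
z=([C_p]-p[\K])^2=p[C_p]-2p[C_p]+p^2[\K]=-p[C_p]+p^2[\K].
\]
Here the extremal contribution $+p[C_p]$ is outweighed by the mixed terms, and the net multiplicity of $[C_p]$ is negative. The conclusion $z\neq0$ is still true, but it needs a correct reason. There are two easy repairs.

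\emph{Distinct primes.} As the paper does, take the product only over the distinct primes $q_1,\dots,q_s$ dividing $\prod_j p_j$; each $X_j$ is still annihilated. Now $C_{q_1}\otimes\cdots\otimes C_{q_s}\cong C_q$, with $q=q_1\cdots q_s$, occurs once with sign $+$. Every other term is a multiple of some $[C_L]$ with $L$ a proper divisor of $q$.

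\emph{Count $[\K]$.} Carry out the $[\K]$-count you mention only in passing. By \autoref{lem:CnCm}, every term containing at least one factor $[C_{p_j}]$ is a multiple of some $[C_L]$ with $L>1$. Hence the coefficient of $[\K]$ in $z$ is $\prod_j(-p_j)\neq 0$, and no distinctness is needed.

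You should also delete the abandoned intermediate attempts, namely the sentence about level maps being ``incompatible with $\Z/N$'' and the ``suitably weighted'' choice of $A$. They play no role in the final argument and the first of them is not meaningful as written.
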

\begin{proof}
For each $j$, fix a prime $p_j$ dividing $\b(X_j)$, and let $U_j=C_{p_j}$, $V_j=\K^{\oplus p_j}$. Observe that since $p_j>1$, $U_j\not\cong V_j$ as they have different optimal direct-sum decomposition into indecomposable evolution ideals. But the $p_j$-isomorphism in
\autoref{ex:miso-cyclic} shows $U_j^{\Phi}=V_j$, so by
\autoref{thm:cancel-Cn} one has $U_j\otimes C_{p_j}\iso V_j\otimes C_{p_j}$ relative to $C_{p_j}$. Applying
\autoref{thm:balance-prime} with $D=X_j$ gives
$U_j\otimes X_j\iso V_j\otimes X_j$, so the isomorphism class of $X_j$, namely $[X_j]$, is a
zero-divisor in $\G(\Evol_\K)$ since $$[X_j]([C_{p_j}]-p_j[\K])=0.$$ 

Define $q=\rad(\prod_{j=1}^r p_j)$, that is, $q$ is the largest square-free integer that divides $\prod_{j=1}^r p_j$, and let $q=q_1\ldots q_s$ be its prime factorization. Therefore, for each $j$ there exists some $q_i$ such that $p_j=q_i$ and $[X_j]([C_{q_i}]-q_i[\K])=0,$ and
\begin{align*}
    [X]\big(\prod_{i=1}^s([C_{q_i}]-q_i[\K])\big) &= (\sum_{j=1}^r[X_j])\big(\prod_{i=1}^s([C_{q_i}]-q_i[\K])\big)\\
    &= \sum_{j=1}^r[X_j]\big(\prod_{i=1}^s([C_{q_i}]-q_i[\K])\big)\\
    &=0
\end{align*}
Now, grouping the summands according to sign as it is done in \autoref{lem:lovasz}, we get
$$\prod_{i=1}^s([C_{q_i}]-q_i[\K])=[A]-[B]$$
thus $X\otimes A\cong X\otimes B$. Notice that $A\not\cong B$, or equivalently $[A]-[B]\ne 0,$ since by repeated
use of \autoref{lem:CnCm} we get that $A$ and $B$ have different optimal direct-sum decomposition into indecomposable evolution ideals. In particular $A$ contains a summand of type $C_q\cong{\otimes}_{i=1}^s C_{q_i}$ while $B$ does not.
\end{proof}

While we do not have a complete description of all zero-divisors in $\G(\Evol_\K)$, the following conjecture appears to be well motivated.

\begin{conjecture}\label{conj:balance}
An evolution $\K$-algebra is a zero-divisor in $\G(\Evol_\K)$ if and only if
each of its indecomposable direct summands has balance greater than $1$.
\end{conjecture}

\begin{remark}\label{rem:reduce-indec}
Notice that \autoref{lem:lovasz} allows one to \emph{combine} the zero-divisor relations associated with the indecomposable summands into a single relation for their direct sum. This establishes precisely the sufficiency direction of \autoref{conj:balance}, which \autoref{thm:zero-divisors} already proves for arbitrary direct sums.

By contrast, \autoref{lem:lovasz} does not reduce the necessity direction to the indecomposable case. There is no converse ``de-combination'' principle: a direct sum containing both a balance-$1$ summand and summands of balance greater than $1$ could, a priori, be a zero-divisor for reasons that are not detected at the level of the individual summands. Therefore, \autoref{conj:balance} cannot, on the basis of \autoref{lem:lovasz}, be reduced to the indecomposable case.
\end{remark}

\section{Graphs and the balance invariant}\label{sec:graphs}

In this section we briefly recall the basics in Graph Theory we use in the following sections. All graphs in this paper are directed, and we follow the conventions of \cite{ElduqueLabra2015,ElduqueLabra2019,Lovasz}.

\begin{definition}
A \emph{graph} is a pair
$\Gamma=(V,E)$ consisting of a finite set of vertices $V$ and a set of
edges (or arrows) $E\subseteq V\times V$. 
\end{definition}

\begin{definition}\label{def:balance}
A \emph{path} in $\Gamma$ is a sequence
$$
  \gamma=(v_0,e_1,v_1,\dots,v_{n-1},e_n,v_n),\qquad n\ge 0,
$$
with $v_0,\dots,v_n\in V$ and $e_1,\dots,e_n\in E$ such that, for each $i\in\{1,\dots,n\}$, either $e_i=(v_{i-1},v_i)$ (the edge is traversed \emph{forwards}) or $e_i=(v_i,v_{i-1})$ (traversed \emph{backwards}). A (non-simple) \emph{cycle} is a path with $v_0=v_n$. The \emph{balance} of $\gamma$ is the integer
$$
\b(\gamma)=\#\{\,i : e_i=(v_{i-1},v_i)\,\}-\#\{\,i : e_i=(v_i,v_{i-1})\,\},
$$
that is, $+1$ for each edge traversed in the ``right'' direction (from
$v_0$ towards $v_n$) and $-1$ for each edge traversed in the ``wrong''
direction, summed over $i$.
\end{definition}

\begin{remark}\label{rm:balance_vs_concatenation}
Observe that balance behaves naturally with respect to the two elementary operations on paths: it is additive under concatenation and changes sign under reversal. Thus, if $\alpha$ and $\beta$ are paths in $\Gamma$ with the same final vertex, then $\alpha\star\beta^{-1}$ is a well-defined path, where $\star$ denotes concatenation and $\beta^{-1}$ is the path $\beta$ traversed in the opposite direction. Hence
$$\b(\alpha\star\beta^{-1})=\b(\alpha)+\b(\beta^{-1}) =\b(\alpha)-\b(\beta).$$
\end{remark}

\begin{definition}\label{def:graphbalance}
The \emph{balance} of $\Gamma$ is the greatest common divisor of the
absolute values of the balances of its cycles,
\[
  \b(\Gamma)=\gcd\{\,|\b(\gamma)| : \gamma \text{ a cycle in } \Gamma\,\}.
\]
\end{definition}

\begin{remark}\label{rm:balance=weight}
The notions of balance of a path and of a graph introduced here agree with those in \cite{ElduqueLabra2015,ElduqueLabra2019} and constitute the terminology commonly used in the context of evolution algebras. They also coincide with the notions of weight introduced by Chen and Chen in \cite{ChenChen}.
\end{remark}

\begin{definition}\label{def:degrees}
The \emph{indegree} and \emph{outdegree} of a vertex $v\in V$ are
$$
  \deg^-(v)=\#\{\,w\in V : (w,v)\in E\,\},
  \qquad
  \deg^+(v)=\#\{\,w\in V : (v,w)\in E\,\}.
$$
The vertex $v$ is a \emph{source} if $\deg^-(v)=0$ and a \emph{sink} if
$\deg^+(v)=0$. The graph $\Gamma$ is \emph{connected} if its underlying
undirected graph is connected, that is, if for every $v,w\in V$ there is a
path from $v$ to $w$.
\end{definition}

The following results characterize the balance of connected graphs.

\begin{lemma}\label{lem:balance-one-cycle}
Let $\Gamma=(V,E)$ be a connected graph with $\b(\Gamma)=d$. Then, for every $v_0\in V$,
$\Gamma$ contains a cycle $\gamma$ with base point $v_0$ and $\b(\gamma)=d$.
\end{lemma}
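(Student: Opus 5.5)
Fix $v_0\in V$. The plan is to show that the set of balances of cycles based at $v_0$ is a subgroup of $\Z$ containing every cycle balance; it is then generated by $d$, and a based cycle of balance exactly $d$ exists. Let
$$S_{v_0}=\{\b(\gamma):\gamma \text{ a cycle with } v_0 \text{ as initial and final vertex}\}\subseteq\Z .$$

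First, $S_{v_0}$ is a subgroup of $\Z$. The trivial path $(v_0)$ with $n=0$ gives $0\in S_{v_0}$. Concatenating two cycles based at $v_0$ gives another such cycle, so $S_{v_0}$ is closed under addition. Reversing a based cycle gives a based cycle with the opposite balance, so $S_{v_0}$ is closed under negation (\autoref{rm:balance_vs_concatenation}). Hence $S_{v_0}=m\Z$ for a unique $m\ge 0$.

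Next, every cycle balance lies in $S_{v_0}$. Let $\gamma$ be any cycle, based at some vertex $w$. Since $\Gamma$ is connected, there is a path $\alpha$ from $v_0$ to $w$. Then $\alpha\star\gamma\star\alpha^{-1}$ is a cycle based at $v_0$. By additivity and sign reversal its balance is $\b(\alpha)+\b(\gamma)-\b(\alpha)=\b(\gamma)$. So $\b(\gamma)\in S_{v_0}$.

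Finally, compare $m$ with $d$. Every cycle based at $v_0$ is in particular a cycle, so $d$ divides every element of $S_{v_0}$, hence $d\mid m$. Conversely, every cycle balance lies in $m\Z$ by the previous step, so $m$ divides the gcd of their absolute values, i.e.\ $m\mid d$. Thus $m=d$, so $d\in S_{v_0}$, which gives a cycle based at $v_0$ with balance exactly $d$. (If the positive value is needed and the chosen cycle has balance $-d$, reverse it.)

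The only delicate point is the degenerate case. If all cycles have balance $0$, then $\gcd\{0\}=0$ and $d=0$, and the trivial cycle works. No step here is a real obstacle; the argument just needs the convention that the trivial path counts as a cycle and that gcd is taken in this way.
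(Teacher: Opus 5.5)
Your proof is correct and follows essentially the same route as the paper: both move an arbitrary cycle to $v_0$ by conjugating with a connecting path, $\alpha\star\gamma\star\alpha^{-1}$, and use that balance is additive under concatenation and changes sign under reversal. The only difference is packaging: the paper applies Bézout explicitly to write $d=\sum_j\alpha_j\b(\gamma_j)$ and builds the combined based cycle by hand, whereas you note that the based balances form a subgroup $m\Z$ of $\Z$ and show $m=d$, which is the same Bézout step in structural form.
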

\begin{proof}
By definition $\b(\Gamma)=\gcd\{\,|\b(\gamma)|:\gamma\text{ a cycle}\,\}=d$, so by Bézout's identity, there
are cycles $\gamma_1,\dots,\gamma_r$ and integers $\alpha_1,\dots,\alpha_r$
with $\sum_{j=1}^r\alpha_j\,\b(\gamma_j)=d$.  Fix the base point $v_0$. Since $\Gamma$ is connected, for
each $j\geq 1$, there is a path $\beta_j$ from $v_0$ to the base point of $\gamma_j$.
Concatenating $\beta_j$, traversing each $\gamma_j$ exactly $|\alpha_j|$ times in the
direction $\operatorname{sg}(\alpha_j)$ and returning along
$\beta_j^{-1}$ for each $j\ge1$, namely
$$
  \gamma=\operatornamewithlimits{\star}_{j=1}^r \bigl(\beta_j\star\gamma_j^{\alpha_j}\star\beta_j^{-1}\bigr),
$$
gives a cycle based at $v_0$. Applying \autoref{rm:balance_vs_concatenation} yields
$$\b(\gamma)=\sum_{j=1}^r\big(\b(\beta_j)+\alpha_j\b(\gamma_j)-\b(\beta_j)\big)=\sum_{j=1}^r\alpha_j\b(\gamma_j)=d. \qedhere
$$
\end{proof}

\begin{lemma}\label{lem:balance_by_grading}
Let $\Gamma=(V,E)$ be a connected graph with no sinks. Then
$\b(\Gamma)\ge 1$, and for any positive integer $d$ the following are equivalent:
\begin{enumerate}[label={\rm (\roman{*})}]
\item\label{lem:balance_by_grading_i} $d$ divides $\b(\Gamma)$.
\item\label{lem:balance_by_grading_ii} There exists a surjective map $$h_d\colon V \longrightarrow \Z/d,$$ such that if $(v,w)\in E$ then $h_d(w)=h_d(v)+1 \pmod d.$
\end{enumerate}
\end{lemma}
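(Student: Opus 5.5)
First we show $\b(\Gamma)\ge 1$, that is, $\Gamma$ has at least one cycle of nonzero balance. We start at any vertex $v_0$ and repeatedly follow an outgoing edge, which always exists because $\Gamma$ has no sinks. Since $V$ is finite, some vertex eventually repeats. The portion of this walk between the two occurrences is a cycle traversed entirely forwards, so its balance equals its length, which is at least $1$. Hence the gcd in \autoref{def:graphbalance} is taken over a set containing a positive integer, and $\b(\Gamma)\ge 1$. (Without this, the gcd could be $0$.)

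\ref{lem:balance_by_grading_i}$\Rightarrow$\ref{lem:balance_by_grading_ii}. Fix a base vertex $v_0$. For each $v\in V$ choose a path $\beta_v$ from $v_0$ to $v$, which exists by connectedness, and set $h_d(v)=\b(\beta_v)\bmod d$. This is well defined: if $\beta,\beta'$ are two paths from $v_0$ to $v$, then by \autoref{rm:balance_vs_concatenation} the cycle $\beta\star\beta'^{-1}$ has balance $\b(\beta)-\b(\beta')$. That balance is divisible by $\b(\Gamma)$, hence by $d$. For an edge $(v,w)\in E$, the path $\beta_v$ followed by the edge traversed forwards is a path to $w$ of balance $\b(\beta_v)+1$, so $h_d(w)=h_d(v)+1$. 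For surjectivity we use the forward cycle found above, through some vertex $u$: along it the values of $h_d$ increase by $1$ at each step. Its length is a positive multiple of $\b(\Gamma)$, hence of $d$, and so is at least $d$. Therefore its first $d$ vertices take all residues modulo $d$.

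\ref{lem:balance_by_grading_ii}$\Rightarrow$\ref{lem:balance_by_grading_i}. Given $h_d$, along any path each forward edge raises $h_d$ by $1$ and each backward edge lowers it by $1$ modulo $d$. By induction on the length of the path, $h_d(v_n)-h_d(v_0)\equiv\b(\gamma)\pmod d$. For a cycle the left side vanishes, so $d$ divides every cycle balance and hence their gcd $\b(\Gamma)$. Surjectivity is not even needed in this direction.

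The only delicate point is surjectivity in the first implication, and it is exactly where the no-sinks hypothesis is used, through the forward cycle of length divisible by $d$. The rest is routine bookkeeping with \autoref{rm:balance_vs_concatenation}.
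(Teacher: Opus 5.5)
Your proof is correct and follows the paper's argument essentially step for step. You use a forward walk to a repeated vertex to get $\b(\Gamma)\ge 1$, define heights as balances of paths from a base vertex modulo $d$, and get surjectivity from a directed cycle whose length is a positive multiple of $d$. The only deviation is in (ii)$\Rightarrow$(i): you telescope $h_d$ around every cycle and conclude that $d$ divides the gcd, whereas the paper invokes \autoref{lem:balance-one-cycle} to get a single cycle of balance exactly $\b(\Gamma)$. Your version is slightly more economical, since it avoids the B\'ezout argument, and it makes explicit that backward edges lower $h_d$ by one, which the paper leaves implicit.
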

\begin{proof}
Since $\Gamma$ has no sink, every vertex has out-degree $\ge 1$, and a finite graph with this property
contains a cycle, of balance equal to its length, so
$\b(\Gamma)\ge 1$. 

Fix now an integer $d>0.$

We assume first that $d$ divides $\b(\Gamma)$. Choose a vertex $v_0$, and define $h_d(v_0)=0$. For any vertex $v$,
choose a path $\gamma$ from $v_0$ to $v$ and set
$h_d(v)=\b(\gamma)\bmod d$. This is well defined: a second path
$\gamma'$ gives a cycle $\gamma-\gamma'$, whose balance is divisible by
$\b(\Gamma)$ and hence by $d$, so $\b(\gamma)\equiv \b(\gamma')\pmod d$.  
Appending an edge $(v,w)\in E$ to the path $\gamma$ gives a path from $v_0$ to $w$ that raises balance by $1$, giving
$h_d(w)=h_d(v)+1$. Finally, since $\Gamma$ has no sinks it contains a directed cycle $Z$ of some length $L$; its balance is $L$, so $d\mid\b(\Gamma)\mid L$ and $L\ge d$. Traversing $Z$, consecutive vertices have consecutive heights $h_d(u_0)+k \pmod d$, and $L\ge d$ forces these to exhaust $\Z/d$; hence $h_d$ is surjective.

Now, assume there exists 
$h_d\colon V\to\Z/d$ such that if $(v,w)\in E$ then $h_d(w)=h_d(v)+1 \pmod d.$ For any given vertex $v_0\in V$ traversing the directed cycle with base point $v_0$ given by \autoref{lem:balance-one-cycle} shows that $h_d(v_0)=h_d(v_0)+\b(\Gamma) \pmod d,$ that holds only if $\b(\Gamma)=0 \pmod d,$ that is, if $d$ divides $\b(\Gamma).$
\end{proof}

\section{Basic definitions on evolution algebras}\label{sec:prelim}

In this section, we establish the notation and recall the basic definitions concerning evolution algebras that are used throughout the paper. Unless stated otherwise, $\K$ denotes a field, and all $\K$-algebras are assumed to be finite-dimensional.

\begin{definition}\label{def:evol}
A $\K$-algebra $X$ is an \emph{evolution algebra} if it admits a basis
$B=\{e_1,\dots,e_n\}$, called a \emph{natural basis}, such that
$e_i\, e_j = 0$ whenever $i\neq j$.

Relative to such a basis the products of the basis elements with
themselves are recorded by scalars $w_{ik}\in\K$ via
\[
  e_i^{\,2} \;=\; \sum_{k=1}^{n} w_{ik}\, e_k ,
\]
and the matrix $M_B(X)=(w_{ik})$ is the \emph{structure matrix} of $X$
relative to $B$.
\end{definition}

Evolution algebras are commutative because the product of distinct basis vectors is zero. However, they are generally neither associative nor power-associative. Once a natural basis $B$ is fixed, the algebra is completely determined by its structure matrix $M_B(X)=(w_{ik})$. Geometrically and biologically, this structure carries two primary interpretations \cite{Tian2008}: in genetics, each generator $e_i$ represents a type and $e_i^2$ its offspring distribution, while the Markov-chain interpretation takes the $w_{ik}$ to be transition weights from state $i$ to state $k$.

\begin{remark}\label{rem:basisnotunique}
The natural basis is not unique, and neither is the structure matrix: a given evolution algebra can carry several genuinely different natural bases. This non-uniqueness is the source of most of the subtlety in the theory and is precisely what the rigidified category of \autoref{sec:nevol} is designed to control.
\end{remark}

\begin{definition}
An evolution $\K$-algebra $X$ is \emph{non-degenerate} if it admits a natural basis $B=\{e_1,\dots,e_n\}$ such that $e_i^2\ne 0$ for $i=1,\ldots,n.$
\end{definition}

\begin{remark}\label{rem:degenerate}
The property of being non-degenerate is independent of the chosen basis. Indeed, according to \cite[Corollary 2.19]{CabreraSilesVelasco2016}, an evolution algebra $X$ is non-degenerate if and only if
$$\operatorname{Ann}(X)=\{x\in X : xX=0\}=0.$$    
\end{remark}

\begin{definition}\label{def:digraph}
Let $X$ be an evolution algebra with natural basis $B=\{e_1,\dots,e_n\}$ and structure matrix $(w_{ik})$. The \emph{associated graph} $\Gamma(X,B)$ is defined as the graph on the vertex set $\{1,\dots,n\}$ that has an edge $(i,k)$  whenever $w_{ik}\neq 0$ \cite{ElduqueLabra2015,ElduqueLabra2019}.
\end{definition}

Many algebraic features of $X$ (nilpotency, simplicity, ideals) are visible in $\Gamma(X,B)$. However, the correspondence between evolution algebras and their associated graphs is neither bijective nor functorial: it depends on the chosen natural basis (\autoref{rem:basisnotunique}), and a homomorphism of evolution algebras need not rescale the prescribed natural bases, hence need not induce a morphism of graphs. Nevertheless, we prove in \autoref{sec:balance} that the balance of the graph $\Gamma(X,B)$ is an intrinsic invariant of $X$ that does not depend on the chosen natural basis $B$ if $X$ is non-degenerate.

\section{Monoidal structure on $\Evol_\K$}\label{sec:monoidal}

Recall the category $\Evol_\K$ is defined as the full subcategory of the category of $\K$-algebras which is spanned by  finite-dimensional non-degenerate evolution algebras. The category $\Evol_\K$ is closed under two operations:

\begin{itemize}
\item The \emph{direct sum} $X\oplus Y$: It is the \emph{categorical
product} in $\Evol_\K$. Indeed, given $X$, and $Y$ non-degenerate evolution algebras with natural basis
$B$ and $B'$ respectively, then $B\oplus\{0\}\sqcup \{0\}\oplus B'$ is a natural basis of $X\oplus Y$, whose
structure matrix is the block diagonal matrix of those of $X$ and $Y$. Moreover, since multiplication in $X\oplus Y$ is component wise, the square of every element in $B\oplus\{0\}\sqcup \{0\}\oplus B'$ is non trivial, that is $X\oplus Y$ is non-degenerate. Moreover, $X\oplus Y$ is the categorical product: the projections
$\pi_X\colon X\oplus Y\to X$ and $\pi_Y\colon X\oplus Y\to Y$ are
homomorphisms of evolution algebras, and for any non-degenerate evolution
$\K$-algebra $Z$ with homomorphisms $f\colon Z\to X$ and $g\colon Z\to Y$,
the map $z\mapsto(f(z),g(z))$ is the unique homomorphism $Z\to X\oplus Y$
with $\pi_X\circ(f,g)=f$ and $\pi_Y\circ(f,g)=g$; it is multiplicative
because products across the two blocks vanish. 

Every non-degenerate evolution algebra admits a unique optimal direct-sum decomposition into indecomposable evolution ideals \cite[Theorem 5.11]{CabreraSilesVelasco2016}. In other words, every non-degenerate evolution algebra decomposes in a  unique way as a direct sum of indecomposable evolution algebras. Finally, the zero algebra $0$ is non-degenerate evolution algebra since it admits an empty natural basis, and it is a unit for this operation.

\item The \emph{tensor product} $X\otimes Y$: 
Let $X$ and $Y$ be evolution algebras over a field $\K$, each endowed with natural bases $B=\{e_i\}_{i\in I}$ and $B'=\{f_j\}_{j\in J}$ respectively. It is well known, e.g.\ \cite[Theorem 3.2]{CabreraMartinMartinTocino2023},  that the tensor product algebra $X \otimes Y$ admits a natural basis given by $B\otimes B'=\{e_i \otimes f_j\}_{(i,j)\in I\times J}$, with multiplication determined by
\[
(e_i \otimes f_j)^2 = e_i^2 \otimes f_j^2, \quad (e_i \otimes f_j)(e_{i'} \otimes f_{j'}) = 0 \text{ for } (i,j)\neq(i',j').
\]
Hence $X\otimes Y$ is itself an evolution algebra, its structure matrix is the Kronecker product of the structure matrices of $X$ and $Y$ \cite[Remark 3.3]{CabreraMartinMartinTocino2023}. Moreover, if both $X$ and $Y$ are non-degenerate, then for every basis element $e_i \otimes f_j$ one has
\[
(e_i \otimes f_j)^2 = e_i^2 \otimes f_j^2 \neq 0,
\]
since both tensor factors are nonzero, and $X\otimes Y$ is again non-degenerate 
\cite[Lemma 3.8]{CabreraMartinMartinTocino2023}. Therefore, the class of non-degenerate evolution algebras is closed under tensor products. The one-dimensional algebra $\K$, acts as a unit for this operation.
\end{itemize}

Since the usual direct sum and tensor product of $\K$-algebras are associative and symmetric up to canonical
isomorphism, it follows that the category $\Evol_\K$ inherits a symmetric monoidal structure from the ambient
category of $\K$-algebras when considering these two operations. This is made explicit in the following statement.

\begin{proposition}\label{prop:monoidal}
$(\Evol_\K,\oplus,0)$ and $(\Evol_\K,\otimes,\K)$ are symmetric monoidal categories, and $\otimes$ is distributive with respect to $\oplus$:
\[
  X\otimes(Y\oplus Z)\;\iso\;(X\otimes Y)\oplus(X\otimes Z).
\]
\end{proposition}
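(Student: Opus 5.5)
The strategy is to transport the symmetric monoidal structures and the distributivity isomorphism from the ambient category of finite-dimensional $\K$-algebras, where they are classical, to the full subcategory $\Evol_\K$. The only genuine content is closure: the monoidal products of objects of $\Evol_\K$ lie in $\Evol_\K$, and so does the unit. Since $\Evol_\K$ is a full subcategory, the associators, unitors and symmetries, being algebra isomorphisms between objects of $\Evol_\K$, are automatically morphisms of $\Evol_\K$. The pentagon, triangle and hexagon axioms then hold because they hold in the ambient category.

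\emph{Step 1: closure and units.} For $\oplus$, closure of $\Evol_\K$ under direct sums and the fact that the zero algebra, with empty natural basis, is a non-degenerate evolution algebra are recorded in \autoref{sec:monoidal}. For $\otimes$, the same section shows that $B\otimes B'$ is a natural basis of $X\otimes Y$ with $(e_i\otimes f_j)^2=e_i^2\otimes f_j^2\neq 0$. The unit $\K$ is the one-dimensional algebra with basis $\{1\}$ and $1^2=1\ne 0$, so it is non-degenerate.

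\emph{Step 2: structure isomorphisms.} I will write these down explicitly to confirm they are multiplicative.

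For $\oplus$:
\begin{itemize}
\item the associator is $((x,y),z)\mapsto(x,(y,z))$;
\item the unitors are $X\oplus 0\to X$ and $0\oplus X\to X$;
\item the symmetry is $(x,y)\mapsto(y,x)$.
\end{itemize}
All of these are componentwise, hence multiplicative.

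For $\otimes$:
\begin{itemize}
\item the associator is $(x\otimes y)\otimes z\mapsto x\otimes(y\otimes z)$;
\item the unitor is $\K\otimes X\to X$, $\lambda\otimes x\mapsto \lambda x$;
\item the symmetry is $x\otimes y\mapsto y\otimes x$.
\end{itemize}
It suffices to check multiplicativity on pure tensors, using $(x\otimes y)(x'\otimes y')=xx'\otimes yy'$. For the unitor this gives $(\lambda\otimes x)(\mu\otimes x')=\lambda\mu\otimes xx'\mapsto\lambda\mu\,xx'$, as required. For the symmetry, commutativity of the multiplication on each factor is not needed, since the swap is multiplicative for any algebras. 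Naturality and the coherence diagrams are identities of linear maps, so they are inherited from $\K$-vector spaces.

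\emph{Step 3: distributivity.} I will use the linear map
$$\phi\colon X\otimes(Y\oplus Z)\to (X\otimes Y)\oplus(X\otimes Z),\qquad x\otimes(y,z)\mapsto (x\otimes y,\,x\otimes z).$$
It is the standard vector-space isomorphism. To see that it is multiplicative, compute on pure tensors:
$$\bigl(x\otimes(y,z)\bigr)\bigl(x'\otimes(y',z')\bigr)=xx'\otimes(yy',zz')\mapsto(xx'\otimes yy',\,xx'\otimes zz'),$$
which equals the product of the images, because the product in the direct sum is componentwise. By bilinearity this extends to all elements. Alternatively, one can check directly that $\phi$ sends the natural basis $\{e_i\otimes(f_j,0)\}\cup\{e_i\otimes(0,g_k)\}$ bijectively onto the natural basis of the right-hand side, preserving squares.

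I do not expect a real obstacle. The only point needing care is that these maps are algebra homomorphisms rather than mere linear isomorphisms, so that they are morphisms in the full subcategory $\Evol_\K$. This reduces to the pure-tensor computations above.
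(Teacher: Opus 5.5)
Your proposal is correct and follows the paper's proof. Both arguments inherit the symmetric monoidal structures from the ambient category of $\K$-algebras, using closure of the full subcategory $\Evol_\K$, and both verify that the canonical distributivity map is an algebra isomorphism. The only difference is in that last check: the paper does it on the natural basis $(B_X\otimes B_Y)\sqcup(B_X\otimes B_Z)$, which is your stated alternative, while your main check on pure tensors is equally valid and does not use the evolution structure at all.
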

\begin{proof}
The symmetric monoidal structures are inherited from the ambient category of
$\K$-algebras, as noted above; only the distributivity isomorphism requires
comment. Let $B_X$, $B_Y$, $B_Z$ be natural bases of $X$, $Y$, $Z$. On the
one hand, $Y\oplus Z$ has natural basis $B_Y\sqcup B_Z$, so $X\otimes(Y\oplus
Z)$ has natural basis
\[
  B_X\otimes(B_Y\sqcup B_Z)=(B_X\otimes B_Y)\sqcup(B_X\otimes B_Z).
\]
On the other hand, this is precisely the natural basis of $(X\otimes
Y)\oplus(X\otimes Z)$. The linear bijection matching $e\otimes f$ to itself
under this identification preserves squares, since
$(e\otimes f)^2=e^2\otimes f^2$ in both algebras and products of distinct
natural basis vectors vanish on both sides; hence it is an isomorphism of
evolution algebras.
\end{proof}
The distributivity in \autoref{prop:monoidal} is exactly the compatibility one needs
between an ``addition'' and a ``multiplication'' to form a ring out of
isomorphism classes, which is the subject of the next section.

\section{The Grothendieck ring of $\Evol_\K$}\label{sec:grothendieck}

In this section we introduce $\G(\Evol_\K)$, the Grothendieck ring of finite-dimensional non-degenerate evolution algebras and some of its basic properties.

Write $[X]$ for the isomorphism class of an evolution $\K$-algebra $X$
.
\begin{definition}\label{def:groth}
The \emph{Grothendieck ring} of $\Evol_\K$ is the ring
\[
  \G(\Evol_\K)=(G,+,\cdot)
\]
generated by the symbols $[X]$, for $X$ an object in $\Evol_\K$, subject to the relations
\begin{align}
  [X]=[Y] &\iff X\iso Y, \label{eq:iso}\\
  [X]+[Y] &= [X\oplus Y], \label{eq:add}\\
  [X]-[Y]=[Z] &\iff [X]=[Z\oplus Y], \label{eq:sub}\\
  [X]\cdot[Y] &= [X\otimes Y]. \label{eq:prod}
\end{align}
\end{definition}
Relations \eqref{eq:add}--\eqref{eq:sub} record the additive (group
completion) structure coming from $\oplus$, while \eqref{eq:prod}
introduces the multiplication coming from $\otimes$; that the latter is
well defined and distributes over the former is guaranteed by \autoref{prop:monoidal}.
This construction is well defined. By \cite[Theorem~5.11]{CabreraSilesVelasco2016} the commutative monoid $(\{[X]\},\oplus)$ of isomorphism classes is free on the indecomposables, hence cancellative, so its group completion is the free abelian group on the indecomposables and the natural map into it is injective. The product $\otimes$ respects isomorphism and distributes over $\oplus$ by \autoref{prop:monoidal}, so it descends to a well-defined, commutative, distributive multiplication on this group completion. In particular, collecting the positive and negative coefficients of a $\Z$-combination $\sum_k c_k[Y_k]$ as $[A]-[B]$, with $A=\bigoplus_{c_k>0}Y_k^{\oplus c_k}$ and $B=\bigoplus_{c_k<0}Y_k^{\oplus(-c_k)}$, produces genuine non-degenerate algebras, since $\Evol_\K$ is closed under $\oplus$ (\autoref{sec:monoidal}). This is what \autoref{lem:lovasz} and \autoref{thm:zero-divisors} use when reading off $X\otimes A\iso X\otimes B$.

\begin{remark}\label{rem:unity}
The Grothendieck ring $\G(\Evol_\K)$ is a commutative ring with unity. As it was mentioned in the previous section, the additive zero is the class of the trivial (zero) evolution algebra,
$$
  0=[0],\qquad [X]+0=[X\oplus 0]=[X],
$$
while multiplicative unit is the class of the one-dimensional
evolution algebra $\K=\K\{e\}$, with $e^2=e$, taken with the natural
basis $\{e\}$,
\[
  1=[\K],\qquad [X]\cdot 1=[X]\cdot[\K]=[X\otimes\K]=[X],
\]
the latter because $\K$ is the unit for $\otimes$
(\autoref{prop:monoidal}).
\end{remark}

As an abelian group, $\G(\Evol_\K)$ is free. Indeed, since every finite-dimensional non-degenerate evolution algebra decomposes in a unique way as a direct sum of indecomposable evolution algebras \cite[Theorem 5.11]{CabreraSilesVelasco2016}, so the isomorphism classes of finite-dimensional indecomposables non-degenerate evolution algebras form a $\Z$-basis:
\[
  (\G(\Evol_\K),+)\;=\;\bigoplus_{[X]\ \text{indecomp.}} \Z\,[X].
\]
This group is \emph{huge}, and listing a set of additive generators is literally the wild classification problem from the introduction.

As a \emph{ring}, by contrast, $\G(\Evol_\K)$ is generated by far fewer elements, because the product \eqref{eq:prod} imposes relations, in particular many zero-divisors, among the additive generators. Concretely, two phenomena reduce the number of generators one must list.
\medskip

\noindent\textbf{(a) Indecomposable products are not needed as ring generators.}
There exist indecomposable non-degenerate evolution algebras $X$ and $Y$ such that the tensor product $X\otimes Y$ is again indecomposable (see \autoref{thm:indecomp-tensor}). In this case,
the three classes $[X]$, $[Y]$ and $[X\otimes Y]$ are distinct additive basis
elements and would all have to be recorded. However, as ring elements,
$[X\otimes Y]=[X][Y]$ by \eqref{eq:prod}, so once $[X]$ and $[Y]$ are
known, the class $[X\otimes Y]$ is determined, and need not be listed separately as a generator.
\medskip

\noindent\textbf{(b) Non-cancellation phenomena produce zero-divisors.} As we show in the following pages, there exist non-degenerate evolution algebras $X$, $Y$, and $Z$ such that
$$
  X\otimes Y \iso X\otimes Z \qquad\text{even though}\qquad Y\not\iso Z .
$$
In the Grothendieck ring $\G(\Evol_\K)$, this relation becomes
$$
  [X]\bigl([Y]-[Z]\bigr)=0
  \qquad\text{with}\qquad
  [Y]-[Z]\ne 0,
$$
which proves that $[X]$ is a zero-divisor. Consequently, the presence of these relations implies that $\G(\Evol_\K)$ is not a free ring. This imposes constraints on the generators: we do not need to record all products individually, as many are already determined by these algebraic dependencies.

To conclude this section, we illustrate how algebraic manipulations in $\G(\Evol_\K)$ give rise to identities that may be of independent interest, as they exhibit further instances of non-cancellation phenomena.

\begin{lemma}\label{lem:lovasz}
Let $A_i$, $B_i$ and $D_i$, $i=1,2,$ be non-degenerate evolution algebras. Suppose $A_i\otimes D_i\iso B_i\otimes D_i$ for $i=1,2$. Then
\[
  (A_1\otimes A_2\;\oplus\;B_1\otimes B_2)\otimes(D_1\oplus D_2)
  \;\iso\;
  (A_1\otimes B_2\;\oplus\;B_1\otimes A_2)\otimes(D_1\oplus D_2).
\]
\end{lemma}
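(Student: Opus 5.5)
The plan is to work entirely inside the Grothendieck ring $\G(\Evol_\K)$ and then read the ring identity back as an isomorphism. By \eqref{eq:iso}--\eqref{eq:prod}, the hypotheses $A_i\otimes D_i\iso B_i\otimes D_i$ become
\[
  [D_i]\bigl([A_i]-[B_i]\bigr)=0,\qquad i=1,2 .
\]
Since the ring is commutative (\autoref{rem:unity}), multiplying the relation for $i=1$ by $[D_2]\cdot\bigl([A_2]-[B_2]\bigr)$ is not what we want. Instead we multiply the relation for $i=1$ by $[A_2]-[B_2]$, which gives $[D_1]\bigl([A_1]-[B_1]\bigr)\bigl([A_2]-[B_2]\bigr)=0$. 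Symmetrically, multiplying the relation for $i=2$ by $[A_1]-[B_1]$ gives $[D_2]\bigl([A_1]-[B_1]\bigr)\bigl([A_2]-[B_2]\bigr)=0$. Adding the two yields
\[
  \bigl([D_1]+[D_2]\bigr)\bigl([A_1]-[B_1]\bigr)\bigl([A_2]-[B_2]\bigr)=0 .
\]

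Next we expand the product using distributivity (\autoref{prop:monoidal}) and group terms by sign:
\[
  \bigl([A_1]-[B_1]\bigr)\bigl([A_2]-[B_2]\bigr)
  =\bigl([A_1\otimes A_2]+[B_1\otimes B_2]\bigr)-\bigl([A_1\otimes B_2]+[B_1\otimes A_2]\bigr).
\]
By \eqref{eq:add} this equals $[P]-[Q]$, where
\[
  P=A_1\otimes A_2\oplus B_1\otimes B_2,\qquad Q=A_1\otimes B_2\oplus B_1\otimes A_2 .
\]
Writing $D=D_1\oplus D_2$, so that $[D]=[D_1]+[D_2]$, and applying \eqref{eq:prod}, the identity becomes $[P\otimes D]-[Q\otimes D]=0$, that is, $[P\otimes D]=[Q\otimes D]$.

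The only delicate step is to deduce an actual isomorphism from this equality of classes. It relies on the well-definedness discussion after \autoref{def:groth}: the monoid of isomorphism classes is free on the indecomposables (\cite[Theorem~5.11]{CabreraSilesVelasco2016}), hence cancellative, so its map into the group completion is injective. Therefore \eqref{eq:iso} holds genuinely, and $[P\otimes D]=[Q\otimes D]$ gives $P\otimes D\iso Q\otimes D$, which is the claimed isomorphism.

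Alternatively, one can avoid the ring and argue directly with \autoref{prop:monoidal}. Expanding both sides into eight summands, we use $A_1\otimes D_1\iso B_1\otimes D_1$ to replace the summand $A_1\otimes A_2\otimes D_1$ by $B_1\otimes A_2\otimes D_1$ and the summand $B_1\otimes B_2\otimes D_1$ by $A_1\otimes B_2\otimes D_1$. In the same way, $A_2\otimes D_2\iso B_2\otimes D_2$ handles the two $D_2$-summands. The two sides then agree summand by summand. This direct check can serve as confirmation of the ring argument.
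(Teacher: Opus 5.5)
Your main argument is the paper's: translate the hypotheses into $([A_i]-[B_i])[D_i]=0$ in the commutative ring $\G(\Evol_\K)$, note that $([A_1]-[B_1])([A_2]-[B_2])([D_1]+[D_2])$ both vanishes and expands to the difference of the two classes, then use injectivity of the monoid of isomorphism classes into its group completion to get a genuine isomorphism. Your alternative summand-by-summand check is also correct and slightly more elementary: it builds the isomorphism directly from distributivity, associativity and symmetry of $\otimes$, with no appeal to the cancellativity supplied by \cite[Theorem~5.11]{CabreraSilesVelasco2016}.
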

\begin{proof}
In $\G(\Evol_\K)$ the hypotheses $A_i\otimes D_i\iso B_i\otimes D_i$ read
\[
  ([A_i]-[B_i])\,[D_i]=0 \qquad (i=1,2).
\]
Therefore
\[
  P:=([A_1]-[B_1])\,([A_2]-[B_2])\,\bigl([D_1]+[D_2]\bigr).
\]
On the one hand, multiplying out and using \eqref{eq:add}--\eqref{eq:prod},
\[
\begin{split}
  P
  &= \bigl[(A_1\otimes A_2)\oplus(B_1\otimes B_2)\bigr]\bigl[D_1\oplus D_2\bigr]\\
  &\quad - \bigl[(A_1\otimes B_2)\oplus(B_1\otimes A_2)\bigr]\bigl[D_1\oplus D_2\bigr],
\end{split}
\]
which is exactly the difference of the classes of the two sides of the claimed
isomorphism. On the other hand, expanding $P$ over $[D_1]+[D_2]$ and using
commutativity of $\G(\Evol_\K)$,
\[
  P=([A_2]-[B_2])\,\underbrace{([A_1]-[B_1])[D_1]}_{=0}
  +([A_1]-[B_1])\,\underbrace{([A_2]-[B_2])[D_2]}_{=0}=0 ,
\]
each summand vanishing by the hypotheses. Hence the difference of the two
classes is $0$, that is, they are equal; since $\oplus$-decomposition into indecomposables is essentially unique, equal classes come from isomorphic algebras, which is the stated isomorphism.
\end{proof}

\section{Balance of non-degenerate algebras is an invariant}\label{sec:balance}

In the following sections we show that whether or not an indecomposable object $X$ in $\Evol_\K$ gives rise to a zero divisor in $\G(\Evol_\K)$ is controlled by the balance of $X$. But a non-degenerate evolution algebra $X$ can carry genuinely different natural bases $B$ with different associated graphs (\autoref{rem:basisnotunique}), thus the quantity $\b(\Gamma(X,B))$
is a priori basis-dependent. In this section we show that is not the case.

\begin{theorem}\label{thm:balance_invariant}
Let $A\neq 0$ be a finite-dimensional non-degenerate evolution $\K$-algebra. Then $\b(\Gamma(A,B))$ does not depend on the chosen natural basis $B$ of $A$.
\end{theorem}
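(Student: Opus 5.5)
The plan is to characterize the balance intrinsically, in terms of structure of $A$ that makes no reference to any basis, and then to check that this characterization holds for every natural basis. The first step is a graph-theoretic reformulation. Since $A$ is non-degenerate, $\Gamma(A,B)$ has no sinks. Applying \autoref{lem:balance_by_grading} to each connected component shows that, for a positive integer $d$, we have $d\mid\b(\Gamma(A,B))$ if and only if there is a map $h\colon V\to\Z/d$ with $h(w)=h(v)+1$ for every edge $(v,w)$. For a disconnected graph surjectivity is dropped, and $\b$ is the gcd over the components. In particular $\b(\Gamma(A,B))$ is the largest such $d$.

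The second step turns such an $h$ into algebra. Set $A_k=\operatorname{span}\{e_i : h(i)=k\}$ for $k\in\Z/d$. Then $A=\bigoplus_k A_k$, $A_kA_l=0$ for $k\neq l$, and $A_k^2\subseteq A_{k+1}$. Call such a decomposition a \emph{cyclic $d$-splitting} of $A$; it is a basis-free notion. Conversely, suppose a cyclic $d$-splitting is adapted to a natural basis $B'$, meaning every vector of $B'$ lies in some $A_k$. Then $h'(f_j)=k$ for $f_j\in A_k$ satisfies the edge condition for $\Gamma(A,B')$: since $f_j^2\in A_{k+1}$ and $B'\cap A_{k+1}$ spans $A_{k+1}$, every $f_l$ occurring in $f_j^2$ lies in $A_{k+1}$. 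Hence $d\mid\b(\Gamma(A,B'))$. By symmetry in $B,B'$, the theorem then follows from the statement that \emph{every cyclic $d$-splitting of a non-degenerate $A$ is adapted to every natural basis $B'$}, possibly after modifying $B'$ by a change that does not alter the balance.

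The main obstacle is this adaptedness step. I would write $f_j=\sum_k x_{j,k}$ with $x_{j,k}\in A_k$. From $f_jf_l=0$ for $j\neq l$ and $A_kA_m=0$ for $k\neq m$ we get $\sum_k x_{j,k}x_{l,k}=0$. The summands lie in the distinct spaces $A_{k+1}$, so $x_{j,k}x_{l,k}=0$ for every $k$. Moreover $f_j^2=\sum_k x_{j,k}^2$. Thus, for each fixed $k$, the nonzero projections $\{x_{j,k}\}_j$ are pairwise orthogonal, and they span $A_k$ since $B'$ spans $A$.

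I would first try to extract from these families a natural basis of each $A_k$. Combined with non-degeneracy (via $\operatorname{Ann}(A)=0$, \autoref{rem:degenerate}) this should show that each $f_j$ has only one nonzero component, or else lies in a family of basis vectors with proportional squares. Such a family can be replaced, through an invertible change of basis, by homogeneous vectors without changing the gcd of the cycle balances. Here I will use that $f_jA=\K f_j^2$ is one-dimensional, so a vector with two nonzero components would force linear relations among squares that the splitting forbids. If this direct route stalls, the fallback is to track how the edge sets of $\Gamma(A,B)$ and $\Gamma(A,B')$ relate under the known change-of-natural-basis description for non-degenerate evolution algebras, where bases differ only by permutation, scaling, and mixing of twin vectors. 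One would then check that such mixing preserves every height function $h$.
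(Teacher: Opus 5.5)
Your reduction is correct, and it is a different route from the paper's:
\begin{itemize}
\item applying \autoref{lem:balance_by_grading} componentwise, $d\mid\b(\Gamma(A,B))$ iff $\Gamma(A,B)$ carries a $\Z/d$-height function;
\item such a function gives a splitting $A=\bigoplus_{k\in\Z/d}A_k$ with $A_kA_l=0$ for $k\neq l$ and $A_kA_k\subseteq A_{k+1}$;
\item a splitting adapted to $B'$ gives a height function on $\Gamma(A,B')$.
\end{itemize}
But all the content lies in the adaptedness step, and you do not prove it. You say what you would try and that it should work, hedge with twin families and modifications of $B'$, and keep a fallback in reserve.

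Your first route does not close this gap by itself. Extracting a basis of each $A_k$ from the orthogonal projections $\{x_{j,k}\}_j$ produces a new natural basis $B''$ adapted to the splitting. Comparing $\Gamma(A,B'')$ with $\Gamma(A,B')$ is then the original problem again. Relations among the squares $x_{j,k}^2$ alone do not suffice either, since in a non-degenerate evolution algebra $x^2=0$ does not force $x=0$. Your fallback does work, but only by importing from \cite{BoudiCabreraSiles2022} that every natural basis of a non-degenerate algebra is partitioned into bases of the canonical rank-one components $E_i$. That is exactly the external input of the paper's proof. Granting it, twin vectors have proportional squares, hence the same nonempty set of successors, hence the same height, so each $E_i$ lies in a single $A_k$.

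The missing step takes only three lines and uses only the ingredients you named ($f_jA=\K f_j^2$ and $\operatorname{Ann}(A)=0$). It needs neither a modification of $B'$ nor a twin exception. Let $f\in B'$ and write $f=\sum_k x_k$ with $x_k\in A_k$. For $a\in A_m$ one has $fa=x_ma\in A_mA_m\subseteq A_{m+1}$, hence
\[
  fA=\bigoplus_{m\in\Z/d}x_mA_m,\qquad x_mA_m\subseteq A_{m+1},
\]
the sum being direct because $m\mapsto m+1$ permutes $\Z/d$. Since $f$ is orthogonal to the rest of $B'$, $fA=\K f^2$ has dimension at most one, so $x_mA_m=0$ for all but at most one $m$. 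For any such $m$, $x_mA=x_mA_m+\sum_{l\neq m}x_mA_l=0$, so $x_m\in\operatorname{Ann}(A)=0$ by \autoref{rem:degenerate}. As $f\neq 0$, exactly one component survives and $f\in A_m$.

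With this, your proof is complete, and it is more economical than the paper's. The paper uses the uniqueness of the rank-one decomposition to build an intrinsic quotient graph $\overline{\Gamma}(A)$ and then projects and lifts cycles. Your route needs only $\operatorname{Ann}(A)=0$. It also gives a basis-free characterization: the $d$ for which $A$ admits such a $\Z/d$-splitting are exactly the divisors of $\b(A)$.
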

\begin{proof}
Since $A$ is non-degenerate, then $b^2\ne 0$ for all $b\in B,$ and every vertex of \(\Gamma(A,B)\) has at least one outgoing
arrow; equivalently, \(\Gamma(A,B)\) has no sinks.

Since the graph $\Gamma(A,B)$ is finite, starting at any vertex and successively following an outgoing arrow eventually produces a repeated vertex. Thus $\Gamma(A,B)$ contains a directed cycle, and in particular,
$\b(\Gamma(A,B))$ is a well-defined positive integer.

We now use the canonical evolution rank-one decomposition of $A$: by
\cite[Theorem~2.11]{BoudiCabreraSiles2022}, there is a
decomposition
\begin{equation}\label{eq:evol_rank_decomp}
A=E_1\oplus\cdots\oplus E_r
\end{equation}
such that
$$\dim E_i^2=1,
\qquad\text{and}\qquad 
E_iE_j=0,\quad
\dim(E_i^2+E_j^2)=2
\quad
(i\neq j).$$

Because $A$ is non-degenerate,  the decomposition given in \autoref{eq:evol_rank_decomp}
is unique, up to a permutation of its summands. Moreover, the proof of
\cite[Theorem~2.11, p.~165]{BoudiCabreraSiles2022} also shows that any natural basis $B$ of $A$ admits a partition
\begin{equation}\label{eeq:partition_base}
B=B_1\mathbin{\dot\cup}\cdots\mathbin{\dot\cup}B_r
\end{equation}
such that $B_i$ is a basis of $E_i$. 

We are defining a graph $\overline{\Gamma}(A)$ associated to the decomposition given in \autoref{eq:evol_rank_decomp}. For each $i$, choose a nonzero vector
$q_i\in E_i^2.$ Since \autoref{eq:evol_rank_decomp} holds, there are uniquely determined vectors $q_{ij}\in E_j$ such that
$$q_i=q_{i1}+\cdots+q_{ir}.$$
Then $\overline{\Gamma}(A)$ is the graph whose vertices are
$E_1,\dots,E_r,$
and whose edges are determined by
$$(E_i, E_j)\in \overline{\Gamma}(A)
\quad\Longleftrightarrow\quad
q_{ij}\neq 0.$$

This definition of $\overline{\Gamma}(A)$ is independent of the choice of the nonzero generator
$q_i\in E_i^2$, because replacing $q_i$ by a nonzero scalar multiple (recall that $\dim E_i^2=1$) does
not change which components $q_{ij}$ vanish. The uniqueness clause
in \cite[Theorem~2.11]{BoudiCabreraSiles2022} therefore implies that
$\overline{\Gamma}(A)$ is intrinsic to $A$, up to relabelling its vertices.

We now prove that $\b(\Gamma(A,B))
=
\b(\overline{\Gamma}(A))$
by showing that $\overline{\Gamma}(A)$ is the quotient graph of $\Gamma(A,B)$ induced by the natural projection on vertices.

Let $B=\mathbin{\dot\cup}_{i=1}^r B_i$ be the partition provided by \autoref{eeq:partition_base}. Since $A$ is non-degenerate, for each $e\in B_i$ there exists $\lambda_{e}\in \K^\times$ such that
$$e^2=\lambda_{e} q_i=\lambda_{e}(q_{i1}+\cdots+q_{ir}).$$

For every pair $i,j$, define
$$S_{ij}(B)
=
\left\{
f\in B_j:
\text{the coefficient of \(f\) in \(q_{ij}\) is nonzero}
\right\}.$$
Then
$S_{ij}(B)\neq\varnothing$ if and only if 
$(E_i, E_j)$ is an edge in $\overline{\Gamma}(A).$
Moreover, for any $e\in B_i$ and $f\in B_j$, $(e, f)$ is an edge in $\Gamma(A,B)$ if and only if $f\in S_{ij}(B).$
Thus all vertices belonging to the same block $B_i$ have the same
successors inside every block $B_j$.
In other words, the map given at level of vertices by 
$$
\begin{array}{rcl}
\pi_B\colon \Gamma(A,B) & \longrightarrow & \overline{\Gamma}(A)\\
e & \longmapsto & E_i
\quad\text{for }e\in B_i.
\end{array}
$$
projects every edge in $\Gamma(A,B)$, with its orientation preserved, to an edge in $\overline{\Gamma}(A)$. 

We first observe that any two vertices in the same block $B_j$
can be connected by a path of balance zero. Let $x,y\in B_i$.
Because $q_i\neq 0$, there exist an index $j$ and a vertex
$z\in S_{ij}(B)$. Hence
$(x, z)$ and $(y,z)$ are edges in $\Gamma(A,B)$, and the path
$$\gamma_{x,y}:=(x, (x, z), z, (y,z), y)  $$
connects $x$ to $y$ and has balance $\b(\gamma_{x,y})=1-1=0$.

Then, for each elementary path of balance $1$ in $\overline{\Gamma}(A)$, namely $\overline{\epsilon}=(E_i,(E_i,E_j),E_j)$, choose a fixed elementary path of balance $1$ in $\Gamma(A,B)$, say ${\epsilon}=(x,(x,y),y)$, where $x\in B_i$ and $y\in S_{ij}(B).$ Therefore $\pi_B({\epsilon})=\overline{\epsilon},$ and we say that ${\epsilon}$ is the preferred lift of $\overline{\epsilon}$.

Now let $\gamma$ be a cycle in $\Gamma(A,B)$. Its projection
$\pi_B(\gamma)$ is a cycle in $\overline{\Gamma}(A)$, and the
orientation of every traversed edge is preserved. Therefore
\[
\b(\pi_B(\gamma))=\b(\gamma).
\]
It follows that every cycle balance occurring in \(\Gamma(A,B)\)
also occurs in \(\overline{\Gamma}(A)\). Consequently,
\begin{equation}\label{eq:balance_1}
\b(\overline{\Gamma}(A))
\mid
\b(\Gamma(A,B)).
\end{equation}

Conversely, let $\overline{\gamma}$ be a cycle in $\overline{\Gamma}(A)$. Write $\overline{\gamma}=\operatornamewithlimits{\star}_{k=1}^r \overline{\epsilon}_k^{a_k},$ a concatenation
where every $\overline{\epsilon}_k$ is an elementary path of balance $1$ and $a_k\in\{-1,1\},$ thus $\b(\overline{\gamma})=\sum_{k=1}^r a_k.$ Let ${\epsilon}_k$ be the preferred lift of $\overline{\epsilon}_k$, and let $s_k$ and $t_k$ be the initial and final vertices of the path ${\epsilon}_k^{a_k}$. Since $\overline{\epsilon}_k^{a_k}=\pi_B({\epsilon}_k^{a_k})$ and $\overline{\epsilon}_{k+1}^{a_{k+1}}=\pi_B({\epsilon}_{k+1}^{a_{k+1}})$ are concatenable paths in $\overline{\Gamma}(A)$ (indices ${\mod r})$, then $t_k,s_{k+1}\in B_j\subset B$ for some $j$, and there exists a balance $0$ path $\gamma_{t_k,s_{k+1}}$ connecting these points in $\Gamma(A,B)$. Therefore $\gamma=\operatornamewithlimits{\star}_{k=1}^r ({\epsilon}_k^{a_k}\star\gamma_{t_k,s_{k+1}})$ is a cycle in $\Gamma(A,B)$ (recall that $s_{k+1}=s_0$) with
$$\b(\gamma)=\sum_{k=1}^r \big(\b({\epsilon}_k^{a_k})+\b(\gamma_{t_k,s_{k+1}})\big)=\sum_{k=1}^r ({a_k}+0)=\b(\overline{\gamma}).$$
Therefore every cycle balance occurring in
$\overline{\Gamma}(A)$ also occurs in $\Gamma(A,B)$, so
\begin{equation}\label{eq:balance_2}
\b(\Gamma(A,B))
\mid
\b(\overline{\Gamma}(A)).
\end{equation}

Combining \autoref{eq:balance_1} and \autoref{eq:balance_2}, we obtain
\[
\b(\Gamma(A,B))
=
\b(\overline{\Gamma}(A)).
\]
The graph \(\overline{\Gamma}(A)\) is intrinsic to $A$, so the
right-hand side does not depend on $B$. This proves the result.
\end{proof}

\begin{remark}\label{rm:definition_balance_not_B}
Since for any given finite-dimensional non-degenerate evolution $\K$-algebra $A\neq 0$, the value $\b(\Gamma(A,B))$ does not depend on the chosen natural basis $B$ of $A$, we shall simply write $\b(A)$. In particular $\b(A)$ is well defined for every non-degenerate $A$.
\end{remark}

\section{The rigidified category $\NEvol_\K$}\label{sec:nevol}

As it was mentioned in \autoref{sec:prelim}, the main difficulty when working with evolution algebras is that algebra homomorphisms ignore the natural basis. We therefore work in a category in which the natural basis, or rather the resulting decomposition into
lines, is part of the structure that morphisms must preserve.

\begin{definition}
The \emph{category of natural evolution algebras} over $\K$ is the category whose objects and morphisms are:

\begin{description}
    \item[Objects] A \emph{natural evolution algebra} is a non-degenerate evolution algebra $X$, together with a fixed decomposition 
\[
  X=\bigoplus_{i=1}^{n} A_i,
  \qquad A_iA_j=0 \ (i\neq j),
  \qquad \dim_\K A_i = 1,
\]
into one-dimensional subspaces with pairwise trivial products.

\item[Morphisms] A morphism in $\NEvol_\K$ is a $\K$-algebra homomorphism that preserves
the decomposition, that is, an algebra morphism $f\colon \bigoplus_i A_i \to \bigoplus_j B_j$ such that
each line is sent into a line,
\[
  f(A_i)\subseteq B_j \quad\text{for some } j .
\]
Equivalently, $f$ comes with a map of index sets $\hat f\colon I\to J$
with
\[
  f(A_i)\subseteq B_{\hat f(i)} \qquad (i\in I). 
\]
We further require that no line is annihilated: $f(A_i)\neq 0$ for every $i$ (equivalently, $f$ restricts to a linear isomorphism $A_i\xrightarrow{\iso}B_{\hat f(i)}$), so that the index map $\hat f$ is well defined.
\end{description}
\end{definition}

\begin{remark}\label{rem:twobases}

Observe that there is a forgetful functor
\[
  \NEvol_\K \rightsquigarrow \Evol_\K .
\]
given by forgetting the natural decomposition. This is a faithful functor which is clearly not an embedding: A single evolution algebra $X$ may carry two different natural bases $B_1,B_2$, giving two different objects
\[
  X_1=\bigoplus_{b\in B_1}\K\{b\},
  \qquad
  X_2=\bigoplus_{b\in B_2}\K\{b\}
\]
in $\NEvol_\K$ that become equal, $X_1=X_2$, after applying the forgetful
functor. 
\end{remark} 

\begin{remark}\label{rm:nat_isomorphisms}
An isomorphism $\varphi\colon X\xrightarrow{\iso}Y$ of evolution
algebras carrying $B_1$ to a natural basis $B_2=\varphi(B_1)$ of $Y$ is
the prototype of the data the rigidified category $\NEvol_\K$ keeps track of. Indeed, a morphism in $\NEvol_\K$ permutes the lines and rescales them, and therefore
the automorphisms of an object are monomial: writing $n=\dim X$,
\[
  \Aut_{\NEvol_\K}(X)\;\le\;\K^{\times}\wr S_n ,
\]
the group of generalized permutation ($\K$-monomial) matrices. Explicitly
an automorphism is a pair
\[
  \varphi=(\sigma;\lambda_1,\dots,\lambda_n),
  \qquad \sigma\in S_n,\ \lambda_i\in\K^{\times},
  \qquad \varphi(A_i)=\lambda_i\,A_{\sigma(i)} .
\] 
The cancellation phenomena described in \autoref{sec:grothendieck} are governed
by a multi-indexed refinement of an automorphism.

\end{remark}

Every natural evolution algebra has a unique well-defined associated graph.  

\begin{definition}\label{def:natural_grpah}
Let $X=\bigoplus_{i=1}^{n} A_i$ be a natural evolution algebra over $\K$.
Its \emph{associated graph} is the directed graph
\[
  \Gamma(X)=(V,E),\qquad V=\{A_1,\dots,A_n\},
\]
whose vertices are the one-dimensional  $A_i$, and where
$(A_r,A_s)\in E$ if and only if the composite
\[
  A_r^{2}\;\hookrightarrow\; X=\bigoplus_{i=1}^{n} A_i
  \;\xrightarrow{\ \pi_s\ }\; A_s
\]
is nonzero, where $\pi_s$ is the canonical linear projection
$$\begin{array}{rcl}
 X=\bigoplus_{i=1}^{n} A_i & \buildrel \pi_s \over \longrightarrow & A_s,\\
  \textstyle\sum_i x_i & \longmapsto & x_s\quad (x_i\in A_i).
\end{array}$$
For a natural evolution algebra $X$ we set $\b(X):=\b\bigl(\Gamma(X)\bigr)$.
\end{definition}

\begin{remark}\label{rm:definition_balance_not_B_II}
If $A=\bigoplus_{i=1}^{n} b_i\K$ is a non-degenerate natural evolution algebra over $\K$, then $B=\{b_i\}$ is a natural base of $A$ and the graphs $\Gamma(A,B)$ and $\Gamma(A)$ coincide. Since the balance of non-degenerate evolution algebra is independent of the chosen natural basis (\autoref{thm:balance_invariant}), the different notions of balance $\b(A)$ agree. 
\end{remark}

We now introduce a construction that will play a central role in \autoref{sec:cyclic}, where it will be used to describe non-degenerate evolution algebras $X,Y,Z$ satisfying
$$X\otimes Y \cong X\otimes Z$$
while $Y\not\cong Z$.

\begin{definition}\label{def:miso}
Let $X=\bigoplus_{i=1}^n \K b_i$ be an object in $\NEvol_\K$ with structure matrix $(m_{kl})$ associated to the basis $B=\{b_i\}$. An
\emph{$m$-isomorphism} of $X$ is an
$m$-tuple
\[
  \Phi=(\varphi_0,\dots,\varphi_{m-1}),
  \qquad
  \varphi_r=(\sigma_r;\lambda_{r,1},\dots,\lambda_{r,n})\in
 \K^{\times}\wr S_n,
\]
such that the scalars
\[
  \mu_{ij}\;:=\;
  \frac{\,m_{\,\sigma_r^{-1}(i)\,,\,\sigma_{r+1}^{-1}(j)}\;\cdot\;
        \lambda_{\,r+1,\,\sigma_{r+1}^{-1}(j)}\,}
       {\lambda_{\,r,\,\sigma_r^{-1}(i)}^{\,2}}
\]
do not depend on $r$ (indices read modulo $m$). The scalars $(\mu_{ij})$
are then the structure constants of an evolution algebra, which we denote
by $X^{\Phi}$:
\[
  X^{\Phi}\;:=\;\bigoplus_{i=1}^{n} \K \bar b_i
  \qquad\text{with structure matrix } (\mu_{ij}).
\]
\end{definition}

We first show that $m$-isomorphisms preserve non-degeneracy for evolution algebras.

\begin{proposition}
Let $X=\bigoplus_{i=1}^n \K b_i$ be an object in $\NEvol_\K$ with structure matrix $(m_{kl})$ associated to the basis $B=\{b_i\}$. 
Let $\Phi=(\varphi_0,\dots,\varphi_{m-1})$ be an $m$-isomorphism of $X$. Then the evolution
algebra $X^{\Phi}=\bigoplus_{i=1}^{n} \K \bar b_i$ is non-degenerate, that is, it is an object in $\NEvol_\K$.
\end{proposition}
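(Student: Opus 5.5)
The plan is to check non-degeneracy directly on the natural basis $\{\bar b_i\}$ of $X^{\Phi}$. By \autoref{rem:degenerate} and the definition, it suffices to show that $\bar b_i^{\,2}\neq 0$ for every $i$. Equivalently, each row $(\mu_{i1},\dots,\mu_{in})$ of the structure matrix $(\mu_{ij})$ should contain a nonzero entry. Once this is shown, the decomposition $X^{\Phi}=\bigoplus_i \K\bar b_i$ into lines with pairwise trivial products makes $X^{\Phi}$ an object of $\NEvol_\K$.

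Fix $i$ and, since the $\mu_{ij}$ do not depend on $r$, take $r=0$. Then
\[
\mu_{ij}=\frac{m_{\sigma_0^{-1}(i),\,\sigma_1^{-1}(j)}\,\lambda_{1,\sigma_1^{-1}(j)}}{\lambda_{0,\sigma_0^{-1}(i)}^{2}} .
\]
All the $\lambda$'s lie in $\K^{\times}$, so $\mu_{ij}\neq 0$ exactly when $m_{\sigma_0^{-1}(i),\sigma_1^{-1}(j)}\neq 0$. Set $k=\sigma_0^{-1}(i)$. Because $X$ is a non-degenerate natural evolution algebra, $b_k^2\neq 0$, so some $l$ satisfies $m_{kl}\neq 0$. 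Since $\sigma_1$ is a bijection, take $j=\sigma_1(l)$. Then $m_{\sigma_0^{-1}(i),\sigma_1^{-1}(j)}=m_{kl}\neq 0$, hence $\mu_{ij}\neq 0$ and $\bar b_i^{\,2}=\sum_j\mu_{ij}\bar b_j\neq 0$.

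The only point needing care is that non-degeneracy of $X$ applies to the fixed basis $B=\{b_i\}$. This holds because objects of $\NEvol_\K$ are non-degenerate, and by \autoref{rem:degenerate} every natural basis of such an algebra has nonzero squares; in particular $B$ does. The case $m=1$ is covered by the same argument with indices read modulo $1$. Beyond this, no real obstacle arises: the nonzero pattern of the rows is simply permuted by $\sigma_0,\sigma_1$ and rescaled by units. Consequently $\Gamma(X^{\Phi})$ is isomorphic to $\Gamma(X)$ via $(k,l)\mapsto(\sigma_0(k),\sigma_1(l))$, and in particular it has no sinks.
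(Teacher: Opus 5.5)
Your argument is correct and is essentially the paper's proof. You fix $i$, set $k=\sigma_r^{-1}(i)$, use non-degeneracy of $X$ to get $m_{kl}\neq 0$, and take $j=\sigma_{r+1}(l)$, so that $\mu_{ij}\neq 0$ because all the $\lambda$'s are units. The paper keeps $r$ arbitrary while you specialize to $r=0$, which is harmless since $\mu_{ij}$ is independent of $r$.

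One correction to your closing remark: $\Gamma(X^{\Phi})$ is \emph{not} in general isomorphic to $\Gamma(X)$.

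\begin{itemize}
\item The assignment $(k,l)\mapsto(\sigma_0(k),\sigma_1(l))$ moves tails of edges by $\sigma_0$ and heads by $\sigma_1$.
\item Unless $\sigma_0=\sigma_1$, it is not induced by a bijection of vertices.
\item It only says that the zero patterns of $(m_{kl})$ and $(\mu_{ij})$ differ by independent row and column permutations.
\end{itemize}

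\autoref{ex:miso-cyclic} is a counterexample. There $\Gamma(C_n)$ is a directed $n$-cycle of balance $n$, whereas $\Gamma(C_n^{\Phi})=\Gamma(\K^{n})$ consists of $n$ loops and has balance $1$. This change of graph is exactly what makes $m$-isomorphisms useful for producing non-cancellation.

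Your ``no sinks'' conclusion still holds, since it only needs every row of $(\mu_{ij})$ to be nonzero, which you already proved. Just delete the graph-isomorphism claim.
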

\begin{proof}
Recall that an evolution algebra with natural basis $\{b_1,\ldots,b_n\}$ is non-degenerate precisely when every natural basis element has nonzero square.
Equivalently, every row of its structure matrix is nonzero.

We prove that every row of the structure matrix $(\mu_{ij})$ of $X^{\Phi}$ is nonzero. Fix $i\in\{1,\ldots,n\}$. Choose any $r\in\{1,\ldots,m\}$. By definition,
\[
\mu_{ij}
=
\frac{
 m_{\sigma_r^{-1}(i),\sigma_{r+1}^{-1}(j)}
 \lambda_{r+1,\sigma_{r+1}^{-1}(j)}
}{
 \lambda_{r,\sigma_r^{-1}(i)}^2
}.
\]

Since each $\varphi_r$ is an automorphism and $\varphi_r(b_k)=\lambda_{r,k}b_{\sigma_r(k)},$
we have $\lambda_{r,k}\neq 0$ for every $r$ and every $k$. Hence 
\begin{equation}\label{eq:prop_m_auto_1}
 \frac{\lambda_{r+1,\sigma_{r+1}^{-1}(j)}}
     {\lambda_{r,\sigma_r^{-1}(i)}^2}\ne 0   
\end{equation}
for every $i,j$.

Now put $k=\sigma_r^{-1}(i).$ Since $X$ is non-degenerate, the $k$-th row of $(m_{ij})$ is nonzero. Therefore
there exists some $\ell\in\{1,\ldots,n\}$ such that
\begin{equation}\label{eq:prop_m_auto_2}
    m_{k\ell}\neq 0.
\end{equation}
Take $j=\sigma_{r+1}(\ell),$ or equivalently $\sigma_{r+1}^{-1}(j)=\ell.$
Therefore, combining \autoref{eq:prop_m_auto_1} and \autoref{eq:prop_m_auto_2},
\[
\mu_{ij}
=
\frac{
 m_{\sigma_r^{-1}(i),\sigma_{r+1}^{-1}(j)}
 \lambda_{r+1,\sigma_{r+1}^{-1}(j)}
}{
 \lambda_{r,\sigma_r^{-1}(i)}^2
}
=
\frac{m_{k\ell}\lambda_{r+1,\ell}}{\lambda_{r,k}^2}\neq 0.
\]
Thus the $i$-th row of $(\mu_{ij})$ is nonzero.

Since $i$ was arbitrary, every row of the structure matrix $(\mu_{ij})$ is nonzero. Hence every natural basis element of $X^{\Phi}$ has nonzero square.
Therefore $X^{\Phi}$ is non-degenerate.
\end{proof}

\begin{example}[The cyclic algebra $C_n$]\label{ex:miso-cyclic}
Let $X=C_n=\K c_0\oplus\dots\oplus\K c_{n-1}$ be the cyclic evolution
algebra with $c_i^{\,2}=c_{i+1}$ (indices mod $n$), so that its structure
matrix is
\[
  m_{ij}=
  \begin{cases}
    1, & j= i+1 \pmod n,\\
    0, & \text{otherwise.}
  \end{cases}
\]
For $r\in\Z/n$ let $\sigma_r\in S_n$ be the cyclic shift
$\sigma_r(i)=i-r \pmod n$ and set
$$\varphi_r=(\sigma_r;1,\dots,1)\in\Aut_{\NEvol_\K}(C_n)\leq \K^{\times}\wr S_n.$$ Then
$\Phi=(\varphi_0,\dots,\varphi_{n-1})$ is an $n$-isomorphism of $C_n$:
since all scalars equal $1$,
\[
  \mu_{ij}
   = m_{\sigma_r^{-1}(i),\,\sigma_{r+1}^{-1}(j)}
   = m_{\,i+r,\;j+r+1}
   = \begin{cases} 1, & i= j \pmod n,\\ 0,&\text{otherwise,}\end{cases}
\]
which is independent of $r$. The corresponding algebra $C_n^{\Phi}$ is therefore
\[
  C_n^{\Phi}=\bigoplus_{j=0}^{n-1}\K\{\bar c_j\},
  \qquad \bar c_j^{\,2}=\bar c_j ,
\]
the split algebra $\K^{\,n}$. In particular $C_n^{\Phi}\not\iso C_n$ for
$n\ge 2$.
\end{example}

Thus an $m$-isomorphism of a non-degenerate evolution algebra $X$ produces a new non degenerate evolution algebra $X^\Phi$, in general not isomorphic to $X$.

The following result shows that two distinct $m$-isomorphisms of a non-degenerate evolution algebra $X$, say $\Phi$ and $\Phi'$, may give rise to isomorphic evolution algebras $X^\Phi \cong X^{\Phi'}$.

\begin{lemma}\label{lem:cyclic-shift}
Let $X=\bigoplus_{i=1}^n \K b_i$ be an object in $\NEvol_\K$ with structure matrix $(m_{kl})$ associated to the basis $B=\{b_i\}$. 
Let $\Phi=(\varphi_0,\dots,\varphi_{m-1})$ be an $m$-isomorphism of $X$. Then
every cyclic ``shift" of $\Phi$,
$\Phi^{(s)}=(\varphi_s,\varphi_{s+1},\dots,\varphi_{s+m-1})$ (indices mod
$m$) is an $m$-isomorphism of $X$, and $X^\Phi \cong X^{\Phi^{(s)}}$.
\end{lemma}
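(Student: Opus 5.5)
Both claims follow from the definition of $m$-isomorphism once the indices are reindexed. Let $\mu_{ij}$ be the common value of the quotient in \autoref{def:miso} computed from $\Phi$. For the shifted tuple $\Phi^{(s)}=(\psi_0,\dots,\psi_{m-1})$ with $\psi_r=\varphi_{r+s}$, the quantity attached to index $r$ is
\[
\frac{m_{\sigma_{r+s}^{-1}(i),\,\sigma_{r+s+1}^{-1}(j)}\;\lambda_{r+s+1,\,\sigma_{r+s+1}^{-1}(j)}}{\lambda_{r+s,\,\sigma_{r+s}^{-1}(i)}^{2}} .
\]
This is exactly the $\Phi$-quantity at index $r+s$, and indices are read modulo $m$. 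Hence it equals $\mu_{ij}$ for every $r$, so it is independent of $r$ and $\Phi^{(s)}$ is an $m$-isomorphism. The condition "indices read modulo $m$" is essential: it is what makes the pair $(\psi_{m-1},\psi_0)=(\varphi_{s+m-1},\varphi_{s})$ one of the pairs already constrained for $\Phi$.

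The same computation shows that the structure constants of $X^{\Phi^{(s)}}$ coincide with those of $X^{\Phi}$, entry by entry and with the same indexing of the basis $\bar b_1,\dots,\bar b_n$. So the linear map sending $\bar b_i$ to $\bar b_i$ is an isomorphism of evolution algebras $X^\Phi\cong X^{\Phi^{(s)}}$; in fact the two algebras are equal as structure matrices.

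The only place needing care is the wrap-around term $r=m-1$. I would verify explicitly that the quotient built from $(\varphi_{s+m-1},\varphi_{s+m})=(\varphi_{s-1},\varphi_s)$ is one of the $\Phi$-conditions. It is, because the $m$ conditions for $\Phi$ range over all cyclically consecutive pairs $(\varphi_r,\varphi_{r+1})$, $r\in\Z/m$. The shift simply permutes this set of pairs. Everything else is immediate bookkeeping.
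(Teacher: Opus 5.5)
Your proposal is correct and follows essentially the same route as the paper: the $r$-th defining quotient for $\Phi^{(s)}$ is the $(r+s)$-th quotient for $\Phi$, so it equals $\mu_{ij}$ for every $r$. Hence $\Phi^{(s)}$ is an $m$-isomorphism and $X^{\Phi^{(s)}}$ has the same structure matrix as $X^{\Phi}$, so the two are in fact equal.
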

\begin{proof}
The condition of \autoref{def:miso} relates only consecutive pairs
$\varphi_r,\varphi_{r+1}$, and therefore any cyclic ``shift" preserves every such pair. Moreover, by \autoref{def:miso} the structure constants $\mu_{ij}$ of $X^{\Phi}$ are independent of $r$; those of $X^{\Phi^{(s)}}$ are given by the same expression evaluated at $r\mapsto r+s$, hence take the identical value $\mu_{ij}$. Thus $X^{\Phi}$ and $X^{\Phi^{(s)}}$ carry the same structure matrix on the same basis, so $X^{\Phi}=X^{\Phi^{(s)}}$; in particular $X^{\Phi}\cong X^{\Phi^{(s)}}$.
\end{proof}

\section{Cyclic evolution algebras and cancellation I}\label{sec:cyclic}

In this section we obtain our first results on the zero divisors of $\G(\Evol_\K)$, or equivalently, on the non-cancellation phenomena in $\Evol_\K$ or  $\NEvol_\K$. First we need to introduce some notation.

\begin{definition}\label{def:relC}
Let $A,B,C$ be evolution $\K$-algebras. We say that $A\otimes C$ and
$B\otimes C$ are \emph{isomorphic relative to $C$} if there is an
isomorphism
\[
  \varphi\colon A\otimes C \xrightarrow{\ \iso\ } B\otimes C
\]
that preserves the second tensor coordinate: for every $c\in C$ there is a
linear map $\psi_c\colon A\to B$ with
\[
  \varphi(a\otimes c)=\psi_c(a)\otimes c \qquad(a\in A).
\]
Equivalently, $\varphi(A\otimes c)\subseteq B\otimes c$ for every $c\in C$. In this case we say $\varphi$ is \emph{second-coordinate preserving}.
\end{definition}

The relevance of the cyclic algebras, defined in \autoref{ex:miso-cyclic}, to the multiplicative structure of
$\G(\Evol_\K)$ is that tensoring with $C_m$ cancels exactly up to the
$m$-isomorphisms of \autoref{def:miso}.

\begin{theorem}\label{thm:cancel-Cn}
Let $A,B$ be algebras in $\NEvol_\K$ of dimension $n$. Then
\[
  A\otimes C_m \;\iso\; B\otimes C_m \qquad(\text{relative to } C_m)
\]
in $\NEvol_\K$ if and only if there is an $m$-isomorphism
$\Phi=(\varphi_0,\dots,\varphi_{m-1})$ of $A$ with $B\iso A^{\Phi}$.
\end{theorem}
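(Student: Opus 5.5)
The plan is to translate a second-coordinate preserving isomorphism in $\NEvol_\K$ into coefficient identities, and to check that these identities are exactly the defining condition of an $m$-isomorphism in \autoref{def:miso}. Fix natural bases $\{a_1,\dots,a_n\}$ of $A$ with structure matrix $(m_{kl})$, $\{b_1,\dots,b_n\}$ of $B$ with structure matrix $(\mu_{kl})$, and $\{c_0,\dots,c_{m-1}\}$ of $C_m$ as in \autoref{ex:miso-cyclic}. Then $A\otimes C_m$ has natural basis $\{a_i\otimes c_r\}$, with
\[
(a_i\otimes c_r)^2=a_i^2\otimes c_{r+1}=\sum_k m_{ik}\,a_k\otimes c_{r+1},
\]
and similarly for $B\otimes C_m$. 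So both algebras are ``$m$ layers'' of copies of the structure matrix, with every arrow going from layer $r$ to layer $r+1$.

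\textbf{Forward direction.} Let $\varphi\colon A\otimes C_m\to B\otimes C_m$ be an isomorphism in $\NEvol_\K$ relative to $C_m$.
\begin{enumerate}[label={\rm (\arabic*)}]
\item I first show that $\varphi$ is monomial layer by layer. Since $\varphi$ sends lines to lines and $\varphi(A\otimes c_r)\subseteq B\otimes c_r$, each line $\K\,a_i\otimes c_r$ goes to a line $\K\,b_j\otimes c_r$ in the same layer.
\item Bijectivity then gives permutations $\sigma_r\in S_n$ and scalars $\lambda_{r,i}\in\K^\times$ with
\[
\varphi(a_i\otimes c_r)=\lambda_{r,i}\,b_{\sigma_r(i)}\otimes c_r .
\]
Set $\varphi_r=(\sigma_r;\lambda_{r,1},\dots,\lambda_{r,n})$.
\item Next I impose multiplicativity, $\varphi\bigl((a_i\otimes c_r)^2\bigr)=\varphi(a_i\otimes c_r)^2$. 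The left side is $\sum_k m_{ik}\lambda_{r+1,k}\,b_{\sigma_{r+1}(k)}\otimes c_{r+1}$. The right side is $\lambda_{r,i}^2\sum_j \mu_{\sigma_r(i),j}\,b_j\otimes c_{r+1}$.
\item Comparing the coefficients of $b_j\otimes c_{r+1}$ and substituting $i\mapsto\sigma_r^{-1}(i)$ gives
\[
\mu_{ij}=\frac{m_{\sigma_r^{-1}(i),\sigma_{r+1}^{-1}(j)}\,\lambda_{r+1,\sigma_{r+1}^{-1}(j)}}{\lambda_{r,\sigma_r^{-1}(i)}^2}\qquad\text{for every } r\in\Z/m .
\]
\end{enumerate}
The left-hand side does not involve $r$. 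Hence $\Phi=(\varphi_0,\dots,\varphi_{m-1})$ is an $m$-isomorphism of $A$, and $B$ with its given basis has literally the structure matrix of $A^\Phi$, so $B\iso A^\Phi$.

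\textbf{Converse.} Given an $m$-isomorphism $\Phi$ of $A$ and an isomorphism $B\iso A^\Phi$, I first reduce to the case $B=A^\Phi$ with basis $\{\bar a_i\}$. Composing with $\theta\otimes\mathrm{id}_{C_m}$, where $\theta$ is the given isomorphism, preserves second coordinates. It also preserves lines provided $\theta$ is a morphism in $\NEvol_\K$; I read ``$B\iso A^\Phi$'' as an isomorphism in $\NEvol_\K$, which is the natural reading here.
\begin{enumerate}[label={\rm (\arabic*)}]
\item Define $\varphi(a_i\otimes c_r)=\lambda_{r,i}\,\bar a_{\sigma_r(i)}\otimes c_r$.
\item This map is a linear bijection, since each $\sigma_r$ is a permutation and each $\lambda_{r,i}\neq0$.
\item It sends lines to lines and preserves second coordinates by construction.
\item Running the computation in steps (3)–(4) above backwards, the $r$-independence of the $\mu_{ij}$ is exactly what is needed for $\varphi$ to preserve squares of basis elements.
\item Products of distinct basis elements vanish on both sides, so $\varphi$ is multiplicative.
\end{enumerate}

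\textbf{Main obstacle.} The only real obstacle I expect is bookkeeping: keeping the index conventions ($\sigma_r$ versus $\sigma_r^{-1}$, and layer $r$ versus $r+1$ modulo $m$) consistent with \autoref{def:miso}. A related subtle point is to justify that the ``relative to $C_m$'' hypothesis, combined with line preservation, forces each layer map $\psi_{c_r}$ to be monomial with respect to the chosen basis of $B$. I would settle this first, by noting that $\varphi$ maps the line $\K\,a_i\otimes c_r$ into a line of $B\otimes C_m$ that meets $B\otimes c_r$, and the only such lines are of the form $\K\,b_j\otimes c_r$.
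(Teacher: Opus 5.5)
Your proposal is correct and follows the paper's proof essentially verbatim. Both write the relative isomorphism as layerwise monomial maps $\varphi_r=(\sigma_r;\lambda_{r,1},\dots,\lambda_{r,n})$, compare coefficients of $b_j\otimes c_{r+1}$ in $\varphi\bigl((a_i\otimes c_r)^2\bigr)=\varphi(a_i\otimes c_r)^2$, and recognize the $r$-independence of the resulting expression as the defining condition of an $m$-isomorphism; your extra care (checking that line preservation plus the relative condition forces each layer map to be monomial, and reading $B\iso A^{\Phi}$ as an isomorphism in $\NEvol_\K$ for the converse, which is exactly what the forward direction produces) makes explicit points the paper leaves implicit.
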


\begin{proof}
Let $A=\bigoplus_{i=1}^n \K a_i$ and $B=\bigoplus_{i=1}^n \K b_i$, so $\{a_i\}_{i=1}^{n}$ and $\{b_i\}_{i=1}^{n}$ are natural bases of $A$
and $B$, with structure constants $a_i^2=\sum_j m_{ij}a_j$ and
$b_i^2=\sum_j \mathcal{M}_{ij}b_j$. Following the notation in \autoref{ex:miso-cyclic}, let $C_m=\bigoplus_{i=0}^{m-1} \K c_i$, so $c_r^2=c_{r+1}$ (indices mod $m$). An
isomorphism 
\[
  \varphi\colon A\otimes C_m \;\iso\; B\otimes C_m \qquad(\text{relative to } C_m)
\]
in $\NEvol_\K$ has the form
\[
  \varphi(a_i\otimes c_r)=\varphi_r(a_i)\otimes c_r
   =\lambda_{ri}\,b_{\sigma_r(i)}\otimes c_r,
   \qquad \varphi_r=(\sigma_r;\lambda_{r1},\dots,\lambda_{rn}),
\]
with one monomial map $\varphi_r$ per layer $c_r$. We determine when
$\varphi$ is an algebra isomorphism.

\emph{Orthogonality.} For $i\neq j$,
\[
  \varphi(a_i\otimes c_r)\,\varphi(a_j\otimes c_r)
   =\lambda_{ri}\lambda_{rj}\,(b_{\sigma_r(i)}b_{\sigma_r(j)})\otimes c_r^2,
\]
which vanishes because $\sigma_r\in S_n$ is injective, so
$\sigma_r(i)\neq\sigma_r(j)$ and $b_{\sigma_r(i)}b_{\sigma_r(j)}=0$. This
holds automatically.

\emph{Squares.} On the one hand,
\[
  \varphi\bigl((a_i\otimes c_r)^2\bigr)
   =\varphi\Bigl(\textstyle\sum_j m_{ij}\,a_j\otimes c_{r+1}\Bigr)
   =\sum_j m_{ij}\,\lambda_{r+1,j}\,b_{\sigma_{r+1}(j)}\otimes c_{r+1},
\]
while on the other,
\[
  \varphi(a_i\otimes c_r)^2
   =(\lambda_{ri}b_{\sigma_r(i)}\otimes c_r)^2
   =\lambda_{ri}^2\sum_j \mathcal{M}_{\sigma_r(i),\,\sigma_{r+1}(j)}\,
     b_{\sigma_{r+1}(j)}\otimes c_{r+1}.
\]
Equality for all $i,j,r$ amounts to
$m_{ij}\lambda_{r+1,j}=\lambda_{ri}^2\,\mathcal{M}_{\sigma_r(i),\sigma_{r+1}(j)}$;
substituting $i=\sigma_r^{-1}(a)$ and $j=\sigma_{r+1}^{-1}(b)$ gives
\[
  \mathcal{M}_{ab}
   =\frac{m_{\sigma_r^{-1}(a),\,\sigma_{r+1}^{-1}(b)}\;
          \lambda_{r+1,\,\sigma_{r+1}^{-1}(b)}}
         {\lambda_{r,\,\sigma_r^{-1}(a)}^{\,2}}
   \qquad(\text{for all }\,a,b,r).
\]
That the right-hand side is independent of $r$ is precisely the condition
that $\Phi=(\varphi_0,\dots,\varphi_{m-1})$ be an $m$-isomorphism of $A$ as given in \autoref{def:miso}, and 
$\mathcal{M}_{ab}=\mu_{ab}$ says exactly that $B\iso A^{\Phi}$.
\end{proof}

\begin{example}[Non-cancellation of cyclic algebras]\label{ex:non-cancel--cyclic}
In \autoref{ex:miso-cyclic} it is shown that the cyclic evolution algebra $C_n$ admits an $n$-isomorphism $\Phi$ such that $C_n^\Phi=\K^n\not\cong C_n$. Therefore, according to \autoref{thm:cancel-Cn}, $C_n\otimes C_n\cong \K^n\otimes C_n$ while $\K^n\not\cong C_n$, and $[C_n]$ is a zero divisor in $\G(\Evol_\K)$.
\end{example}

\section{Cyclic evolution algebras and cancellation II}\label{sec:cyclic_II}

We now generalize the results in the previous section to a more general family of cyclic evolution algebras,
allowing also a ``shift" of the second coordinate when making tensor with these algebras. This generalization makes sense only when the base field $\K$ is not algebraically closed (see \autoref{lem:weighted-trivial} and \autoref{lem:shifted-trivial}).

\begin{definition}\label{def:cyclic}
Given scalars $m_0,\dots,m_{n-1}\in\K^{\times}$, the \emph{cyclic evolution
algebra} $C(m_0,\dots,m_{n-1})$ is defined as
$$
C(m_0,\dots,m_{n-1}):=\K c_0\oplus\dots\oplus\K c_{n-1},
  \qquad c_i^{\,2}=m_i\,c_{i+1}\quad(\text{indices mod }n). 
$$
The cyclic evolution algebra with all weights equal to $1$ is $C_n=C(1,\dots,1).$
\end{definition}

\begin{remark}\label{rem:cyclic-normal}
Observe that \[
  C(m_0,\dots,m_{n-1})\;\iso\;
  C\!\left(1,\dots,1,\ \textstyle\prod_{i=0}^{n-1}m_i^{2^{\,n-1-i}}\right).
\]
Indeed, if we denote by $\{a_0,\ldots,a_{n-1}\}$ the natural basis of $C(m_0,\dots,m_{n-1})$ and by $\{b_0,\ldots,b_{n-1}\}$ the natural basis of $C\!\left(1,\dots,1,\prod_{i=0}^{n-1}m_i^{2^{\,n-1-i}}\right)\!,$ the morphism given by $\psi(a_{0})=b_0$ and  $$\psi(a_{i+1})=\big(\prod_{j=0}^{i}m_j^{-2^{i-j}}\big)b_{i+1}\quad\text{for $0\le i\le n-2$,}$$
is the desired isomorphism. Observe that the isomorphism $\psi$ above is just a
rescaling the natural bases, and therefore, it defines an isomorphism in $\NEvol_\K$. \autoref{lem:weighted-trivial} below shows that, over an
algebraically closed field, even this scalar normalizes to $1$, so that
$C(m_0,\dots,m_{n-1})\iso C_n$ outright and, in particular, $C_n$ is the normal
form of every cyclic evolution algebra.
\end{remark}

We do now generalize the concept of $m$-isomorphism.

\begin{definition}\label{def:k-alpha-iso}
Let $\alpha_0,\dots,\alpha_{m-1}\in\K^{\times}$ and $k\in\Z/m$. A
\emph{$(k;\alpha_0,\dots,\alpha_{m-1})$-isomorphism} of an evolution
algebra $A$ with structure constants $(m_{ij})$ is a sequence
$\Phi=(\varphi_0,\dots,\varphi_{m-1})$,
$\varphi_r=(\sigma_r;\lambda_{r1},\dots,\lambda_{rn})$, such that the
scalars
\[
  \mu_{ij}\;:=\;
  \frac{m_{\sigma_r^{-1}(i),\,\sigma_{r+1}^{-1}(j)}\;
        \lambda_{r+1,\,\sigma_{r+1}^{-1}(j)}}
       {\lambda_{r,\,\sigma_r^{-1}(i)}^{\,2}}
  \cdot\frac{\alpha_r}{\alpha_{r+k}}
\]
are independent of $r$ (indices mod $m$). As before, $A^{\Phi}$ denotes
the evolution algebra on a copy of the basis of $A$ with structure
constants $(\mu_{ij})$.
\end{definition}

\begin{remark}\label{rem:k-alpha-special}
Taking all $\alpha_r$ equal, or more generally $\alpha_r/\alpha_{r+k}= 1$,
recovers the $m$-isomorphism of \autoref{def:miso}; in particular
\autoref{ex:miso-cyclic} is an instance of the
present notion as well.
\end{remark}

\begin{definition}\label{def:relC_shift}
Let $A,B$ be evolution $\K$-algebras and let
$C=C(\alpha_0,\dots,\alpha_{m-1})$. We say that $A\otimes C$ and
$B\otimes C$ are \emph{isomorphic relative to $C$ shifting the second coordinate by $k$} if there is an
isomorphism
\[
  \varphi\colon A\otimes C \xrightarrow{\ \iso\ } B\otimes C
\]
such that for every $c_r\in C$ there is a
linear map $\psi_r\colon A\to B$ with
\[
  \varphi(a\otimes c_r)=\psi_r(a)\otimes c_{r+k} \qquad(a\in A).
\]
\end{definition}

\begin{remark}\label{rem:shift}
An isomorphism relative to $C(\alpha_0,\dots,\alpha_{m-1})$ shifting the second coordinate by $0$ is just an isomorphism relative to $C(\alpha_0,\dots,\alpha_{m-1})$ as given in \autoref{def:relC}.
\end{remark}

\begin{theorem}\label{thm:cancel-weighted}
Let $A,B$ be algebras in $\NEvol_\K$ and let
$C=C(\alpha_0,\dots,\alpha_{m-1})$. Then $A\otimes C\iso B\otimes C$ relative to $C$  shifting the second coordinate by $k$ in $\NEvol_\K$ if and only if there is a $(k;\alpha_0,\dots,\alpha_{m-1})$-isomorphism $\Phi$ of $A$ with
$B\iso A^{\Phi}$.
\end{theorem}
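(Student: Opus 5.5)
We follow the computation in the proof of \autoref{thm:cancel-Cn} closely, inserting the weights and the shift. Write $A=\bigoplus_{i=1}^n\K a_i$ and $B=\bigoplus_{i=1}^n\K b_i$, with $a_i^2=\sum_j m_{ij}a_j$ and $b_i^2=\sum_j\mathcal M_{ij}b_j$. Write $C=\bigoplus_{r\in\Z/m}\K c_r$ with $c_r^2=\alpha_r c_{r+1}$. A morphism in $\NEvol_\K$ sends lines to lines. Hence an isomorphism $\varphi$ relative to $C$ shifting the second coordinate by $k$ is determined by one monomial map $\varphi_r=(\sigma_r;\lambda_{r1},\dots,\lambda_{rn})\in\K^\times\wr S_n$ per layer, with
\[
\varphi(a_i\otimes c_r)=\lambda_{ri}\,b_{\sigma_r(i)}\otimes c_{r+k}.
\]
Two facts justify this form. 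First, $\psi_r$ must send each line $\K a_i$ to some line $\K b_j$ with nonzero image. Second, bijectivity of $\varphi$ forces each $\psi_r$ to be injective, hence $\sigma_r\in S_n$. Implicitly we need $\dim A=\dim B$, which follows from $\dim(A\otimes C)=\dim(B\otimes C)$. Conversely, any such family $(\varphi_r)$ defines a bijective linear map of this form.

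\emph{Orthogonality.} For $i\ne j$ the products $\varphi(a_i\otimes c_r)\varphi(a_j\otimes c_r)$ vanish because $\sigma_r$ is injective. For $r\ne s$ the products vanish because $c_{r+k}c_{s+k}=0$. So orthogonality is automatic, exactly as in \autoref{thm:cancel-Cn}.

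\emph{Squares.} On one side,
\[
\varphi\bigl((a_i\otimes c_r)^2\bigr)=\alpha_r\sum_j m_{ij}\lambda_{r+1,j}\,b_{\sigma_{r+1}(j)}\otimes c_{r+1+k}.
\]
On the other side,
\[
\varphi(a_i\otimes c_r)^2=\lambda_{ri}^2\,\alpha_{r+k}\sum_j\mathcal M_{\sigma_r(i),\sigma_{r+1}(j)}\,b_{\sigma_{r+1}(j)}\otimes c_{r+k+1}.
\]
Comparing coefficients gives
\[
\alpha_r m_{ij}\lambda_{r+1,j}=\alpha_{r+k}\lambda_{ri}^2\,\mathcal M_{\sigma_r(i),\sigma_{r+1}(j)}.
\]
Now substitute $i=\sigma_r^{-1}(a)$ and $j=\sigma_{r+1}^{-1}(b)$, and divide by $\alpha_{r+k}\lambda_{r,\sigma_r^{-1}(a)}^2$, which is allowed since all scalars are nonzero. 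This yields
\[
\mathcal M_{ab}=\frac{m_{\sigma_r^{-1}(a),\sigma_{r+1}^{-1}(b)}\,\lambda_{r+1,\sigma_{r+1}^{-1}(b)}}{\lambda^2_{r,\sigma_r^{-1}(a)}}\cdot\frac{\alpha_r}{\alpha_{r+k}}
\]
for all $a,b,r$. The left-hand side does not depend on $r$, so this says precisely that $\Phi=(\varphi_0,\dots,\varphi_{m-1})$ is a $(k;\alpha_0,\dots,\alpha_{m-1})$-isomorphism of $A$ in the sense of \autoref{def:k-alpha-iso}. It also says that the structure matrix of $B$ in the basis $\{b_i\}$ equals $(\mu_{ab})$, that is, $B\iso A^\Phi$ via $b_i\mapsto\bar b_i$.

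\emph{Converse.} Given $\Phi$ with $B\iso A^\Phi$, we first replace $B$ by $A^\Phi$. This is harmless: composing with an isomorphism $B\iso A^\Phi$ in $\NEvol_\K$, tensored with the identity of $C$, preserves the relative shifted form. We then define $\varphi$ by the displayed formula. The computation above, read backwards, shows that $\varphi$ is multiplicative, and it is bijective since it permutes and rescales the basis $\{a_i\otimes c_r\}$.

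The main point requiring care is the bookkeeping of the weights. The square of $c_r$ contributes $\alpha_r$, while the square of the image factor $c_{r+k}$ contributes $\alpha_{r+k}$, and this is exactly what produces the factor $\alpha_r/\alpha_{r+k}$. A second point is the reduction of $B\iso A^\Phi$ to an isomorphism in $\NEvol_\K$. If the given isomorphism is only one in $\Evol_\K$, we interpret the statement, as the paper does in \autoref{thm:cancel-Cn}, as identifying $B$ with $A^\Phi$ on the matched natural basis.
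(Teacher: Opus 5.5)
Your proposal is correct and follows the same route as the paper, which redoes the layer-by-layer coefficient comparison of \autoref{thm:cancel-Cn} for the shifted map $\varphi(a_i\otimes c_r)=\varphi_r(a_i)\otimes c_{r+k}$ and observes that the weights of $C$ contribute the extra factor $\alpha_r/\alpha_{r+k}$. The paper only sketches this, while you write the computation out in full and are more explicit about three points: why the isomorphism is monomial on each layer, why the converse direction works, and how to read the condition $B\iso A^{\Phi}$.
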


\begin{proof}
The computation follows that of \autoref{thm:cancel-Cn}, applied to the layer map $\varphi(a_i\otimes c_r)=\varphi_r(a_i)\otimes c_{r+k}$. Here, the weights $c_r^2=\alpha_r c_{r+1}$ of $C$ enter the coefficient comparison, contributing an extra factor of $\alpha_r/\alpha_{r+k}$. Therefore, the resulting condition is that the scalars $\mu_{ij}$ 
from \autoref{def:k-alpha-iso} must be independent of
$r$
\autoref{def:k-alpha-iso}. This is equivalent to $\Phi$ being a
$(k;\alpha_0,\dots,\alpha_{m-1})$-isomorphism with $B\iso A^{\Phi}$.
\end{proof}

\begin{lemma}\label{lem:k-alpha-shift}
If $\Phi=(\varphi_0,\dots,\varphi_{m-1})$ is a
$(k;\alpha_0,\dots,\alpha_{m-1})$-isomorphism of $A$, then for each
$s\in\Z/m$ the cyclic shift $\Phi^{(s)}=(\varphi_s,\dots,\varphi_{s+m-1})$
is a $(k;\alpha_s,\dots,\alpha_{s+m-1})$-isomorphism of $A$.
\end{lemma}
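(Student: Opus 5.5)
The claim is a direct reindexing check. The idea is to verify the defining condition of \autoref{def:k-alpha-iso} for $\Phi^{(s)}$, with respect to the shifted weights $\alpha'_r:=\alpha_{s+r}$, by relating each consecutive pair of $\Phi^{(s)}$ to a consecutive pair of $\Phi$. The argument parallels \autoref{lem:cyclic-shift}; the only new ingredient is that the weight factor must be shifted together with the maps.

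\emph{Step 1: name the shifted data.} Write $\varphi'_r:=\varphi_{s+r}$, so that $\sigma'_r=\sigma_{s+r}$ and $\lambda'_{r,i}=\lambda_{s+r,i}$, with all indices read in $\Z/m$. The consecutive pair $(\varphi'_r,\varphi'_{r+1})$ is then exactly the pair $(\varphi_{s+r},\varphi_{s+r+1})$ of $\Phi$. This uses only that $r\mapsto r+s$ is a bijection of $\Z/m$ compatible with $r\mapsto r+1$.

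\emph{Step 2: compare the scalars.} The quantity attached to $\Phi^{(s)}$ at index $r$ is
\[
\mu'_{ij}(r)=\frac{m_{\sigma_{s+r}^{-1}(i),\,\sigma_{s+r+1}^{-1}(j)}\,\lambda_{s+r+1,\,\sigma_{s+r+1}^{-1}(j)}}{\lambda_{s+r,\,\sigma_{s+r}^{-1}(i)}^{2}}\cdot\frac{\alpha'_r}{\alpha'_{r+k}} .
\]
Since $\alpha'_r/\alpha'_{r+k}=\alpha_{s+r}/\alpha_{s+r+k}$, this is literally the expression $\mu_{ij}(s+r)$ of \autoref{def:k-alpha-iso} for $\Phi$, evaluated at index $s+r$.

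\emph{Step 3: conclude.} By hypothesis $\mu_{ij}(t)$ does not depend on $t$. Hence $\mu'_{ij}(r)=\mu_{ij}(s+r)=\mu_{ij}$ for every $r$. So $\Phi^{(s)}$ is a $(k;\alpha_s,\dots,\alpha_{s+m-1})$-isomorphism, and it has the same structure constants as $\Phi$, which gives $A^{\Phi^{(s)}}=A^{\Phi}$ as a byproduct.

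There is no real obstacle. The only point needing care is that the shift $k$ and the cyclic shift $s$ commute in $\Z/m$: the factor $\alpha_{r+k}$ for $\Phi$ must become $\alpha'_{r+k}=\alpha_{s+r+k}$, and this holds because all index arithmetic is done modulo $m$.
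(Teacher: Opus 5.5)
Your proposal is correct and follows the paper's approach: reindex by $r\mapsto r+s$, exactly as in \autoref{lem:cyclic-shift}, so that each consecutive pair of $\Phi^{(s)}$ is a consecutive pair of $\Phi$. You write this out more explicitly than the paper's one-line proof, and in particular you make explicit that the weight factor $\alpha_r/\alpha_{r+k}$ becomes $\alpha_{s+r}/\alpha_{s+r+k}$, which that proof leaves implicit.
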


\begin{proof}
As in \autoref{lem:cyclic-shift}, the defining condition constrains only
consecutive indices, which a cyclic shift preserves.
\end{proof}

Over an algebraically closed field the weights of a cyclic evolution algebra can be removed altogether.

\begin{lemma}\label{lem:weighted-trivial}
If $\K$ is algebraically closed, then
$C(\alpha_0,\dots,\alpha_{m-1})\iso C_m$ for all
$\alpha_0,\dots,\alpha_{m-1}\in\K^{\times}$.
\end{lemma}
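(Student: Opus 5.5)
The plan is to reduce to the normal form of \autoref{rem:cyclic-normal} and then absorb the single remaining weight by rescaling the natural basis. By \autoref{rem:cyclic-normal} we have
\[
  C(\alpha_0,\dots,\alpha_{m-1})\iso C(1,\dots,1,\beta),\qquad \beta=\prod_{i=0}^{m-1}\alpha_i^{2^{\,m-1-i}}\in\K^{\times}.
\]
It therefore suffices to show that $C(1,\dots,1,\beta)\iso C_m$ whenever $\K$ is algebraically closed.

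Let $\{b_0,\dots,b_{m-1}\}$ be the natural basis of $C(1,\dots,1,\beta)$, so that $b_i^2=b_{i+1}$ for $i\le m-2$ and $b_{m-1}^2=\beta b_0$. We look for a diagonal rescaling $c_i=t_i b_i$ with $t_i\in\K^\times$ satisfying $c_i^2=c_{i+1}$ for all $i$ (indices mod $m$).

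The conditions for $i\le m-2$ read $t_i^2=t_{i+1}$, which forces $t_i=t^{2^i}$ with $t=t_0$. The last condition is $c_{m-1}^2=c_0$, that is, $t_{m-1}^2\beta=t_0$, or
\[
  t^{2^m}\beta=t .
\]
Since $t\neq0$, this is equivalent to $t^{2^m-1}=\beta^{-1}$. Because $\K$ is algebraically closed and $2^m-1\ge1$, the polynomial $x^{2^m-1}-\beta^{-1}$ has a root in $\K$. That root is nonzero because $\beta^{-1}\neq0$. Note that separability of this polynomial is not needed, only the existence of a root, so the argument also works in characteristic $2$.

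With this $t$, the basis $\{c_i\}$ is a natural basis in which $c_i^2=c_{i+1}$ for all $i$. The linear map $b_i\mapsto t^{-2^i}$-rescaled correspondence, namely sending the standard basis of $C_m$ to $\{c_i\}$, is then an algebra isomorphism $C_m\iso C(1,\dots,1,\beta)$. In fact it is an isomorphism in $\NEvol_\K$, since it is a pure rescaling of lines. Composing with the isomorphism of \autoref{rem:cyclic-normal} gives the claim.

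There is no real obstacle in this argument. The only point needing care is setting up the exponent bookkeeping so that the closing condition becomes a single equation of the form $t^{N}=\beta^{-1}$ with $N\ge1$, whose solvability is exactly the hypothesis on $\K$. For $m=1$ this equation is $t=\beta^{-1}$, which is solvable over any field, consistent with the general case.
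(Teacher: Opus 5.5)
Your proof is correct and is essentially the paper's argument. The paper rescales directly via $\bar c_i\mapsto\gamma_i c_i$ with the recursion $\gamma_{i+1}=\gamma_i^2/\alpha_i$ and closing condition $\gamma_0^{2^m-1}=\prod_j\alpha_j^{2^{m-1-j}}$; you first pass through the normal form of \autoref{rem:cyclic-normal} and then solve the equivalent single equation $t^{2^m-1}=\beta^{-1}$, which is the same computation split into two steps.
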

\begin{proof}
Seek an isomorphism $\bar c_i\mapsto\gamma_i c_i$ from
$C(\alpha_0,\dots,\alpha_{m-1})$ (with $\bar c_i^2=\alpha_i\bar c_{i+1}$) to
$C_m$ (with $c_i^2=c_{i+1}$). Comparing
$\varphi(\bar c_i^2)=\alpha_i\gamma_{i+1}c_{i+1}$ with
$\varphi(\bar c_i)^2=\gamma_i^2 c_{i+1}$ forces the recursion
$\gamma_{i+1}=\gamma_i^2/\alpha_i$, whence
\[
  \gamma_i=\gamma_0^{\,2^{i}}\prod_{j=0}^{i-1}\alpha_j^{-2^{\,i-1-j}}.
\]
Closing the cycle ($\gamma_m=\gamma_0$, indices mod $m$) gives
$\gamma_0^{\,2^{m}-1}=\prod_{j=0}^{m-1}\alpha_j^{\,2^{\,m-1-j}}$, so it
suffices to take
\[
  \gamma_0=\Bigl(\textstyle\prod_{j=0}^{m-1}\alpha_j^{\,2^{\,m-1-j}}\Bigr)^{1/(2^{m}-1)}\in\K^{\times},
\]
which exists because $\K$ is algebraically closed. The remaining $\gamma_i$
are then nonzero, so $\varphi$ is an isomorphism.
\end{proof}
 
\begin{lemma}\label{lem:shifted-trivial}
Let $A,B$ be evolution $\K$-algebras and let $C=C(\alpha_0,\dots,\alpha_{m-1})$. If $\K$ is algebraically closed, then $A\otimes C\iso B\otimes C$ relative to $C$  shifting the second coordinate by $k$ if and only if $A\otimes C_m\iso B\otimes C_m$ relative to $C_m$.
\end{lemma}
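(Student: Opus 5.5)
The idea is to transport everything along the isomorphism $C(\alpha_0,\dots,\alpha_{m-1})\iso C_m$ of \autoref{lem:weighted-trivial}, and then to absorb the shift by $k$ into a rotation automorphism of $C_m$. Both maps act only on the second tensor factor and send each line $\K c_r$ to a single line. So they preserve the layered form $a\otimes c_r\mapsto \psi_r(a)\otimes c_{r+k}$ required in \autoref{def:relC_shift}, changing only the target layer and the scalars.

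\emph{Step 1 (removing the weights).} Since $\K$ is algebraically closed, \autoref{lem:weighted-trivial} gives an isomorphism $\psi\colon C\to C_m$ with $\psi(\bar c_r)=\gamma_r c_r$ and all $\gamma_r\in\K^\times$. Then $\mathrm{id}_A\otimes\psi$ and $\mathrm{id}_B\otimes\psi$ are algebra isomorphisms $A\otimes C\to A\otimes C_m$ and $B\otimes C\to B\otimes C_m$. Suppose $\varphi\colon A\otimes C\to B\otimes C$ satisfies $\varphi(a\otimes \bar c_r)=\psi_r(a)\otimes\bar c_{r+k}$. Then the conjugate
\[
\varphi' := (\mathrm{id}_B\otimes\psi)\circ\varphi\circ(\mathrm{id}_A\otimes\psi)^{-1}
\]
is an isomorphism $A\otimes C_m\to B\otimes C_m$. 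It satisfies $\varphi'(a\otimes c_r)=\gamma_r^{-1}\gamma_{r+k}\,\psi_r(a)\otimes c_{r+k}$, so it is still an isomorphism relative to $C_m$ shifting the second coordinate by $k$, with linear maps $\gamma_r^{-1}\gamma_{r+k}\psi_r$. Conjugating back by the same maps gives the reverse transfer. Hence, for a fixed $k$, shifted isomorphisms relative to $C$ and relative to $C_m$ correspond bijectively.

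\emph{Step 2 (removing the shift).} The rotation $\tau\colon C_m\to C_m$, $\tau(c_i)=c_{i-k}$ (indices mod $m$), is an automorphism. It permutes a natural basis, and
\[
\tau(c_i)^2=c_{i-k}^2=c_{i-k+1}=\tau(c_{i+1})=\tau(c_i^2).
\]
Composing $\varphi'$ with $\mathrm{id}_B\otimes\tau$ yields an isomorphism sending $a\otimes c_r$ to $\gamma_r^{-1}\gamma_{r+k}\psi_r(a)\otimes c_r$. This is second-coordinate preserving, i.e.\ an isomorphism relative to $C_m$ in the sense of \autoref{def:relC} (by \autoref{rem:shift}). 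Conversely, given an isomorphism relative to $C_m$, composing with $\mathrm{id}_B\otimes\tau^{-1}$ produces one shifting by $k$, and Step 1 then transports it back to $C$.

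Everything used here is a composition with monomial maps on the second factor, so morphisms in $\NEvol_\K$ stay in $\NEvol_\K$. The only point needing care, and the one I expect to be the main obstacle, is seeing that the rotation genuinely kills the shift. This works because $C_m$ has no weights, so its cyclic rotations are automorphisms; for a general $C(\alpha_0,\dots,\alpha_{m-1})$ they need not be, which is exactly why Step 1 must come first.

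As a cross-check, the same conclusion can be read off from \autoref{thm:cancel-weighted} and \autoref{thm:cancel-Cn}. One would rescale the $\lambda_{r,i}$ by $\gamma$-factors to turn a $(k;\alpha)$-isomorphism into an $m$-isomorphism, with the relabelling $r\mapsto r+k$ of the target layers absorbed as in \autoref{lem:k-alpha-shift}. However, the direct conjugation argument above avoids that bookkeeping.
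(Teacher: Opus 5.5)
Your proof is correct and is essentially the paper's own argument: the paper also conjugates by $\mathrm{Id}\otimes\psi$, where $\psi\colon C\to C_m$ comes from \autoref{lem:weighted-trivial}, and then composes with $\mathrm{Id}_B\otimes\Psi_{-k}$, the rotation $c_i\mapsto c_{i-k}$ of $C_m$, which is exactly your $\tau$. Your explicit tracking of the scalars $\gamma_r^{-1}\gamma_{r+k}$ and of the converse direction fills in details that the paper leaves implicit.
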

\begin{proof}
Write $C=\bigoplus_{i=0}^{m-1} \K\bar c_i$ and $C_m=\bigoplus_{i=0}^{m-1} \K c_i$. Then, by \autoref{lem:weighted-trivial}, there exists an isomorphism in $\NEvol_\K$
$$\begin{array}{rcl}
\psi\colon C & \to & C_m\\
\bar c_i & \mapsto & \gamma_i c_i
\end{array}$$
Moreover, for any given $l\in\Z$, there exists an automorphism in 
$\NEvol_\K$
$$\begin{array}{rcl}
\Psi_l\colon C_m & \to & C_m\\
c_i & \mapsto & c_{i+l}\quad \text{(indices mod $m$)}
\end{array}$$
Therefore, $\phi\colon A\otimes C\to B\otimes C$ is an isomorphism relative to $C$ shifting the second coordinate by $k$ if and only if
$\widetilde{\phi}\colon A\otimes C_m\to B\otimes C_m,$
given by $$\widetilde{\phi}= \big(\mathrm{Id}_B\otimes(\Psi_{-k}\circ \psi)\big)\circ \phi\circ (\mathrm{Id}_A\otimes \psi^{-1})$$  is an isomorphism relative to $C_m$.
\end{proof}

\section{Local monomorphisms, transfer of isomorphism, and balance}\label{sec:cancellation_local}

In this section we show that the isomorphism type of tensor products in $\NEvol_\K$ is indeed controlled by local data of the common factor, and that non-cancellation phenomena appears when this common factor has higher balance.

We first define how we do compare the local data of two evolution algebras.

\begin{definition}\label{def:localmor}
Let $A=\bigoplus_{i\in I} \K a_i$ and $B=\bigoplus_{p\in P} \K b_p$ be  objects in $\NEvol_\K$  with associated structure matrices $(m_{ij})$ and $(\mathcal{M}_{pq})$ respectively, and let
$\varphi\colon A\to B$ be a linear map of the form
$\varphi(a_i)= b_{\sigma(i)}$ for some index map $\sigma\colon I\to P.$  We call $\varphi$ a \emph{local monomorphism} if
\[
  m_{ij}
   \ne 0\quad\text{implies}\quad \mathcal{M}_{\sigma(i)\sigma(j)}\ne 0
   \qquad(\text{for all}\ i,j).
\]
\end{definition}

\begin{remark}\label{rem:localgraph}
A local monomorphism $A\to B$ is exactly a morphism of 
graphs $\Gamma(A)\to\Gamma(B)$: the identity of \autoref{def:localmor} says precisely that
each edge $a_i\to a_j$ of $\Gamma(A)$ is matched by an edge
$b_{\sigma(i)}\to b_{\sigma(j)}$ of $\Gamma(B)$. The governing invariant of
such graphs is the balance of
\autoref{def:balance} and \autoref{def:graphbalance}; it coincides with the
invariant of Elduque--Labra \cite{ElduqueLabra2015,ElduqueLabra2019} and,
on the graph side, with that of Lov\'asz \cite[p.~154]{Lovasz}.
\end{remark}

We show that the isomorphism type of tensor products in  $\NEvol_\K$ is transferred by local monomorphisms.

\begin{theorem}\label{thm:transfer}
Suppose $A\otimes C\iso B\otimes C$ relative to $C$ in $\NEvol_\K$, and suppose there is a
local monomorphism $\Psi\colon D\to C$. Then $A\otimes D\iso B\otimes D$
relative to $D$ in $\NEvol_\K$.
\end{theorem}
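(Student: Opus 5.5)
The plan is to turn the relative isomorphism $A\otimes C\iso B\otimes C$ into a family of scalar identities, one for each edge of $\Gamma(C)$. These identities will involve only the layer maps and the structure constants of $A$ and $B$, and not the weights of $C$. Pulling them back along $\Psi$ will then give the required isomorphism over $D$.

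\emph{Step 1: extract the edge identities.} Write $A=\bigoplus_i\K a_i$ and $B=\bigoplus_p\K b_p$, with structure matrices $(m_{ij})$ and $(\mathcal M_{pq})$. Write $C=\bigoplus_r\K c_r$ with $c_r^2=\sum_s w_{rs}c_s$, and $D=\bigoplus_u\K d_u$ with $d_u^2=\sum_v w'_{uv}d_v$.

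As in the proof of \autoref{thm:cancel-Cn}, a relative isomorphism in $\NEvol_\K$ has the form
\[
\varphi(a_i\otimes c_r)=\lambda_{ri}\,b_{\sigma_r(i)}\otimes c_r ,
\]
with one monomial map $\varphi_r=(\sigma_r;\lambda_{r1},\dots)$ for each layer. Orthogonality of distinct basis vectors holds automatically, because each $\sigma_r$ is injective. Comparing $\varphi((a_i\otimes c_r)^2)$ with $\varphi(a_i\otimes c_r)^2$ layer by layer, the coefficient of $b_{\sigma_s(j)}\otimes c_s$ gives
\[
w_{rs}\,m_{ij}\,\lambda_{sj}=w_{rs}\,\lambda_{ri}^2\,\mathcal M_{\sigma_r(i),\sigma_s(j)}\qquad\text{for all } i,j,r,s .
\]
Whenever $(c_r,c_s)$ is an edge of $\Gamma(C)$ we have $w_{rs}\neq0$, so we may cancel it. This yields
\[
(\ast)_{r\to s}:\qquad m_{ij}\,\lambda_{sj}=\lambda_{ri}^2\,\mathcal M_{\sigma_r(i),\sigma_s(j)}\qquad\text{for all } i,j .
\]
Conversely, the same computation shows that any family of layer-wise monomial bijections satisfying $(\ast)_{r\to s}$ for every edge of the relevant graph is an algebra isomorphism. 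For non-edges both sides vanish, since the weight factor is zero. So relative isomorphisms are characterized purely in terms of the graph of the second factor, and I expect this to be the main point to verify carefully.

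\emph{Step 2: pull back along $\Psi$.} Let $\hat\Psi$ be the index map of $\Psi$, so $\Psi(d_u)=c_{\hat\Psi(u)}$. Define
\[
\varphi'(a_i\otimes d_u)=\lambda_{\hat\Psi(u),i}\,b_{\sigma_{\hat\Psi(u)}(i)}\otimes d_u ,
\]
that is, layer $u$ of $A\otimes D$ uses the layer map $\varphi_{\hat\Psi(u)}$. On each layer this is a monomial bijection $A\to B$ with nonzero scalars. Hence $\varphi'$ is a linear bijection that sends lines to lines and preserves the second coordinate, as required in \autoref{def:relC} and in $\NEvol_\K$. The map $\hat\Psi$ need not be injective; this causes no problem, because different $D$-layers may reuse the same $C$-layer map.

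\emph{Step 3: verify multiplicativity.} By Step 1 applied to $D$, it suffices to check $(\ast)_{u\to v}$ for $\varphi'$ for every edge $(d_u,d_v)$ of $\Gamma(D)$. By \autoref{def:localmor} and \autoref{rem:localgraph}, $(c_{\hat\Psi(u)},c_{\hat\Psi(v)})$ is then an edge of $\Gamma(C)$. The required identity is therefore exactly $(\ast)_{\hat\Psi(u)\to\hat\Psi(v)}$, which holds by Step 1. It follows that $\varphi'$ is an isomorphism $A\otimes D\iso B\otimes D$ relative to $D$ in $\NEvol_\K$.
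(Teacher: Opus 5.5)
Your proposal is correct and follows essentially the same route as the paper. The paper likewise extracts the edge-wise identity $m_{ij}\lambda_{s,j}=\lambda_{r,i}^2\mathcal{M}_{\sigma_r(i),\sigma_s(j)}$ whenever $\gamma_{rs}\neq0$. It then defines $\bar\varphi(a_i\otimes d)=\lambda_{\rho(d),i}\,b_{\sigma_{\rho(d)}(i)}\otimes d$ and uses the local-monomorphism condition (edges of $\Gamma(D)$ map to edges of $\Gamma(C)$) to transport those identities to $D$.
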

\begin{proof}
Write $$A=\bigoplus_{i=1}^n \K a_i,\quad B=\bigoplus_{i=1}^n \K b_i,\quad C=\bigoplus_{i=1}^m \K c_i,\quad\text{and}\quad D=\bigoplus_{i=1}^l \K d_i,$$
and let $(m_{ij})$, $(\mathcal{M}_{ij})$, and $(\gamma_{ij})$ the associated structure matrices of $A$, $B$ and $C$. 

Let $\varphi(a_i\otimes c_r)=\lambda_{r,i}\,b_{\sigma_r(i)}\otimes c_r$ be
the given isomorphism relative to $C$. Then
\[
  \varphi\bigl((a_i\otimes c_r)^2\bigr)
   =\sum_{j,s} m_{ij}\,\gamma_{rs}\,\lambda_{s,j}\,
     b_{\sigma_{s}(j)}\otimes c_s,
\]
while
\[
  \varphi(a_i\otimes c_r)^2
   =\sum_{j,s} \lambda_{r,i}^2\,\gamma_{rs}\,
     \mathcal{M}_{\sigma_r(i),\,\sigma_s(j)}\,
     b_{\sigma_s(j)}\otimes c_s,
\]
so the two agree provided that
\begin{equation}\label{eq:proof_local_1}
  m_{ij}\,\lambda_{s,j}
   =\lambda_{r,i}^2\,\mathcal{M}_{\sigma_{r}(i),\,\sigma_{s}(j)}\quad\text{whenever $\gamma_{rs}\neq0$}.
\end{equation}

Let $\Psi(d)=c_{\rho(d)}$ be the local monomorphism, with index map $\rho$. Define
$$\begin{array}{rcl}
  \bar\varphi\colon A\otimes D & \longrightarrow & B\otimes D\\
  a_i\otimes d &\longmapsto & \lambda_{\rho(d),\,i}\,
    b_{\sigma_{\rho(d)}(i)}\otimes d
\end{array}$$
which is relative to $D$. Orthogonality is automatic: products of elements
with distinct second coordinates vanish on both sides, and for a fixed $d$
the map $\sigma_{\rho(d)}$ is injective, so $a_ia_j=0$ ($i\neq j$) is
preserved. For the squares, write $d^2=\sum_{d'}M_{dd'}d'$. Then
\[
  \bar\varphi\bigl((a_i\otimes d)^2\bigr)
   =\sum_{j,d'} m_{ij}\,M_{dd'}\,\lambda_{\rho(d'),j}\,
     b_{\sigma_{\rho(d')}(j)}\otimes d',
\]
while
\[
  \bar\varphi(a_i\otimes d)^2
   =\sum_{j,d'} \lambda_{\rho(d),i}^2\,M_{dd'}\,
     \mathcal{M}_{\sigma_{\rho(d)}(i),\,\sigma_{\rho(d')}(j)}\,
     b_{\sigma_{\rho(d')}(j)}\otimes d',
\]
so the two agree provided that 
\begin{equation}\label{eq:proof_local_2}
  m_{ij}\,\lambda_{\rho(d'),j}
   =\lambda_{\rho(d),i}^2\,\mathcal{M}_{\sigma_{\rho(d)}(i),\,\sigma_{\rho(d')}(j)}\quad\text{whenever $M_{dd'}\neq0$}.
\end{equation}
Now, if we denote $r=\rho(d)$, $s=\rho(d')$, then $\Psi$ being a local morphism gives that
$M_{dd'}\neq0$ implies $\gamma_{r,s}\neq0$, and therefore  \autoref{eq:proof_local_1} implies \autoref{eq:proof_local_2}. Hence
$\bar\varphi$ is an isomorphism relative to $D$.
\end{proof}

We can now connect the balance invariant to the zero-divisors in
$\G(\Evol_\K)$. Recall that $[A]$ is a \emph{zero-divisor} if
$[A]\bigl([Y]-[Z]\bigr)=0$ for some $Y\not\iso Z$, i.e.\ if
$A\otimes Y\iso A\otimes Z$ with $Y\not\iso Z$. The canonical example of a zero-divisor in $\G(\Evol_\K)$ is $[C_n]$ (\autoref{ex:non-cancel--cyclic}).

\begin{theorem}\label{thm:balance-prime}
Let $D$ be an indecomposable object in $\NEvol_\K$ with $\b(D)>1$, and let
$n$ be a divisor of $\b(D)$. If $A\otimes C_n\iso B\otimes C_n$ relative
to $C_n$, then $A\otimes D\iso B\otimes D$ relative to $D$.
\end{theorem}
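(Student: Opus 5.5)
The plan is to reduce the statement to \autoref{thm:transfer}. It suffices to construct a local monomorphism $\Psi\colon D\to C_n$; then the hypothesis $A\otimes C_n\iso B\otimes C_n$ relative to $C_n$ transfers directly to $A\otimes D\iso B\otimes D$ relative to $D$. By \autoref{rem:localgraph}, a local monomorphism $D\to C_n$ is the same thing as a morphism of graphs $\Gamma(D)\to\Gamma(C_n)$. The graph $\Gamma(C_n)$ is the directed $n$-cycle on vertices $c_0,\dots,c_{n-1}$, with edges $(c_r,c_{r+1})$ and indices mod $n$. So we need a map $\rho$ from the vertices of $\Gamma(D)$ to $\Z/n$ such that every edge $(d,d')$ of $\Gamma(D)$ satisfies $\rho(d')=\rho(d)+1 \pmod n$.

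This is exactly the height function supplied by \autoref{lem:balance_by_grading}, and the key step is to check its hypotheses for $\Gamma(D)$.

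First, $\Gamma(D)$ has no sinks, because $D$ is non-degenerate: every natural basis vector has nonzero square, so every row of the structure matrix is nonzero.

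Second, $\Gamma(D)$ must be connected. This is where indecomposability enters. If the underlying undirected graph split into two components, the natural basis would split into two blocks $B'\sqcup B''$. Since squares of vectors in $B'$ only involve vectors of $B'$, and likewise for $B''$, the spans of $B'$ and $B''$ would be evolution ideals with trivial mutual product. Their direct sum would give a nontrivial decomposition of $D$, contradicting indecomposability. Equivalently, one can cite the standard fact from \cite{CabreraSilesVelasco2016} that a non-degenerate evolution algebra is indecomposable if and only if its graph is connected.

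Third, by \autoref{rm:definition_balance_not_B_II} the balance $\b(\Gamma(D))=\b(D)$ is unambiguous, and $n\mid\b(D)$ by hypothesis.

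Applying \autoref{lem:balance_by_grading} with $d=n$ then gives $h_n\colon V(\Gamma(D))\to\Z/n$ with $h_n(w)=h_n(v)+1$ for every edge $(v,w)$. We define $\Psi(d_i)=c_{h_n(i)}$. Then $\Psi$ has the required form (basis vectors go to basis vectors via an index map), and the edge condition says precisely that $M_{dd'}\neq 0$ implies $\gamma_{\rho(d)\rho(d')}\neq 0$ in $C_n$. Surjectivity of $h_n$ is not needed. Invoking \autoref{thm:transfer} with $C=C_n$ completes the proof.

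The only step that requires genuine care is connectivity of $\Gamma(D)$ from indecomposability. The point to verify is that the span of a connected component really is an ideal; this holds because products of distinct natural basis vectors vanish. Everything else is a direct assembly of \autoref{lem:balance_by_grading}, \autoref{rem:localgraph} and \autoref{thm:transfer}. The hypothesis $\b(D)>1$ is not used beyond guaranteeing that nontrivial choices $n>1$ exist.
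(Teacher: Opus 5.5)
Your proposal is correct and follows the paper's proof essentially verbatim: the height function $h_n$ from \autoref{lem:balance_by_grading} defines a graph morphism $\Gamma(D)\to\Gamma(C_n)$, which is a local monomorphism by \autoref{rem:localgraph}, and \autoref{thm:transfer} then finishes the argument. The only difference is that you explicitly justify why $\Gamma(D)$ is connected and has no sinks, whereas the paper simply asserts this from indecomposability and non-degeneracy.
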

\begin{proof}
Write $D=\bigoplus_{i=1}^m \K d_i$. Since $D$ is indecomposable and non-degenerate, the graph $\Gamma(D)$ is connected and has no sinks. Thus, according to \autoref{lem:balance_by_grading}, there exists a map $$h_n\colon\{d_i: i=1,\ldots, m\}\longrightarrow \Z/n,$$ such that if $(d_i,d_j)$ is an edge in $\Gamma(D)$ then $h_n(d_j)=h_n(d_i)+1 \pmod n.$
Setting
$$\begin{array}{rcl}
\Psi\colon D & \to & C_n\\
d_i & \mapsto & c_{h_n(d_i)}
\end{array}$$
defines a morphism of graphs
$\Gamma(D)\to\Gamma(C_n)$, and according to \autoref{rem:localgraph}, $\Psi$ is a local monomorphism. Thus
\autoref{thm:transfer} applies.
\end{proof}

\section{Cancellation when the balance is one}

The results above show that high balance forces zero-divisors. We now prove
the complementary statement: a balance-one indecomposable factor cancels.
The proof relies on a non-trivial translation of Lovász's combinatorial arguments \cite[pp.~153--155]{Lovasz} into the setting of evolution algebras, requiring several modifications to handle the algebraic structure.

\begin{theorem}\label{thm:cancel-balance-one}
Let $C$ be an indecomposable object in $\NEvol_\K$ with $\b(C)=1$. If
$A\otimes C\iso B\otimes C$ relative to $C$, then $A\iso B$.
\end{theorem}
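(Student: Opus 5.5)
The plan is to work entirely with the layer maps of the given isomorphism and use $\b(C)=1$ to produce a single balance-one closed walk along which they compose to a genuine isomorphism $A\to B$. As in the proofs of \autoref{thm:cancel-Cn} and \autoref{thm:transfer}, write the isomorphism relative to $C$ as
\[
\varphi(a_i\otimes c_r)=\lambda_{r,i}\,b_{\sigma_r(i)}\otimes c_r ,
\]
and set $\varphi_r=(\sigma_r;\lambda_{r,1},\dots,\lambda_{r,n})$, a monomial bijection $A\to B$ for each vertex $c_r$ of $\Gamma(C)$. Equation \eqref{eq:proof_local_1} says that for every edge $(c_r,c_s)$ of $\Gamma(C)$ the pair $(\varphi_r,\varphi_s)$ satisfies
\[
\varphi_s(x^2)=\varphi_r(x)^2\qquad(x\in\{a_i\}).
\]
Call a pair $(f,g)$ of monomial bijections with $g(a_i^2)=f(a_i)^2$ for all $i$ a \emph{twisted pair} $A\to B$. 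A twisted pair with $f=g$ is exactly an isomorphism $A\cong B$ in $\NEvol_\K$. So the goal is to manufacture a twisted pair of the form $(f,f)$.

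First I would fix the combinatorial input. By \autoref{lem:balance-one-cycle}, applied to the connected graph $\Gamma(C)$ (connected since $C$ is indecomposable), there is a closed walk $\gamma=(c_{r_0},e_1,c_{r_1},\dots,e_L,c_{r_L}=c_{r_0})$ with $\b(\gamma)=1$. Unrolling $\gamma$ gives an oriented cycle graph $Z$ on $L$ vertices together with a graph morphism $Z\to\Gamma(C)$. The proof of \autoref{thm:transfer} never uses non-degeneracy of $D$, only the edge condition. So the same computation yields layer maps $\psi_0,\dots,\psi_{L-1}$ (namely $\psi_k=\varphi_{r_k}$) with the following property: for every forward edge $k\to k+1$ of $Z$, $(\psi_k,\psi_{k+1})$ is a twisted pair, and for every backward edge, $(\psi_{k+1},\psi_k)$ is a twisted pair. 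The problem is thereby reduced to a statement about a single oriented cycle of balance $1$.

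The core step, which I expect to be the main obstacle, is \emph{folding} the cycle $Z$. This is the algebraic analogue of Lovász's reduction \cite[pp.~153--155]{Lovasz}. Because $\b(Z)=1$, the cycle is not a directed cycle unless $L=1$, so it contains a backward edge adjacent to a forward edge.

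Consider first the pattern $u\to w\leftarrow v$. Here $(\psi_u,\psi_w)$ and $(\psi_v,\psi_w)$ are twisted pairs, so $\psi_u(a_i)^2=\psi_w(a_i^2)=\psi_v(a_i)^2$ for all $i$. Non-degeneracy of $B$ (every $b_p^2\neq0$) together with orthogonality of the natural basis then forces $\sigma_u=\sigma_v$ and $\lambda_{u,i}=\pm\lambda_{v,i}$. This should allow me to replace $\psi_v$ by $\psi_u$ everywhere the vertex $v$ occurs and contract the two edges to one. The contraction preserves all remaining twisted-pair conditions and keeps the balance equal to $1$.

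The dual pattern $u\leftarrow w\to v$ gives $\psi_u(a_i^2)=\psi_w(a_i)^2=\psi_v(a_i^2)$. Since the $a_i^2$ need not span $A$, this only forces $\psi_u=\psi_v$ on $A^2$, and this is the delicate case. Here I would try to use the rank-one decomposition from the proof of \autoref{thm:balance_invariant}: vertices in one block $B_i$ of $A$ have proportional squares. I would then show that $\psi_u$ and $\psi_v$ may be modified to agree, or, failing that, that one can still contract the two edges after composing with a suitable monomial automorphism of $B$ fixing squares.

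Iterating the contractions shortens $Z$ until it becomes a single loop $k\to k$, i.e.\ a twisted pair $(\psi_k,\psi_k)$, which is the desired isomorphism $A\cong B$. Should this direct folding fail in the second pattern, the fallback is the Lovász-style counting route. There one compares, for each $D$, the number of relative-to-$D$ twisted layer systems into $A$ and into $B$, and uses balance $1$ to show that these counts determine $A$ up to isomorphism.
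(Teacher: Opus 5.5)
Your setup (layer maps, the reformulation via twisted pairs, and the reduction to a single balance-one closed walk) matches the paper's \eqref{eq:star-layer} and Step~2. There is, however, a genuine gap in the folding step, which is where all the work lies.

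In the pattern $u\to w\leftarrow v$ you only get $\psi_u(a_i)^2=\psi_v(a_i)^2$, and this does not force $\sigma_u=\sigma_v$. Non-degeneracy says $b_p^2\neq 0$; it does not say that distinct natural basis vectors have non-proportional squares. For instance, if $b_1^2=b_2^2=b_1+b_2$ and $A=B$, then for any $C$, choosing each layer map to be either the identity or the swap satisfies every edge condition.

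More seriously, even granting $\psi_u(x)^2=\psi_v(x)^2$, replacing $\psi_v$ by $\psi_u$ only preserves the conditions on edges \emph{leaving} $v$. Suppose the other edge of the walk at $v$ is an incoming edge $y\to v$. Then you need $\psi_y(a_i)^2=\psi_u(a_i^2)$, that is, $\psi_u=\psi_v$ on $A^2$, and this is exactly what is unavailable. You can check this by trying to merge $x_2$ and $x_4$, in either direction, in the walk $x_0\to x_1\to x_2\to x_3\leftarrow x_4\leftarrow x_0$.

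The dual pattern $u\leftarrow w\to v$ is left open. The counting fallback is not viable either: over an infinite $\K$ the sets of layer systems are infinite, and no Lov\'asz-type ``hom-counts determine isomorphism'' theorem is available in $\NEvol_\K$.

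The missing idea is to \emph{compose} layer maps rather than identify them. If $(f_1,g_1),(f_2,g_2),(f_3,g_3)$ are twisted pairs, then so is $(f_3f_2^{-1}f_1,\,g_3g_2^{-1}g_1)$. This holds because a twisted pair also satisfies $f^{-1}(b)^2=g^{-1}(b^2)$ on basis vectors; it is Step~1 of the paper, \eqref{eq:compose}.

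With this operation, $u\to w\leftarrow v\to x$ does contract to the honest twisted pair $(\psi_u,\psi_x)$. Local contractions of $+-+$ and $-+-$ still get stuck, though, on a walk with step pattern $(+,+,+,-,-)$, which has balance $1$ but contains no such factor. So a global choice of edges is needed.

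The paper makes this choice as follows. It extends the balance-one walk periodically and uses its height function $g$, which satisfies $g(\nu+n)=g(\nu)+1$. It then takes the forward edges at the ascending and descending zeros of $g$. The alternating word of tails, $\Phi$, and the alternating word of heads, $\Phi'$, both cancel down to the same word, indexed by the upward and downward crossings of $g$. Hence $\Phi=\Phi'$, so $\Phi(a)^2=\Phi(a^2)$ and $\Phi\colon A\to B$ is an isomorphism.
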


\begin{proof}
Let $$A=\bigoplus_{i=1}^n \K a_i,\quad B=\bigoplus_{i=1}^n \K b_i,\quad\text{and}\quad C=\bigoplus_{i=1}^m \K c_i$$
and let $(\gamma_{ij})$ be the associated structure matrix of $C$. 

Given $\varphi\colon A\otimes C\iso B\otimes C$ relative to $C$, write
$\varphi(a_i\otimes c_r)=\varphi_r(a_i)\otimes c_r$, where
$$\begin{array}{rcl}
\varphi_r\colon A & \longrightarrow & B\\
a_i & \longmapsto & \lambda_{i,r}\,b_{\sigma_r(i)}
\end{array}$$
is a monomial linear isomorphism. Notice that each $\varphi_r$ sends lines to
lines, thus it preserves orthogonality, and it is an algebra isomorphism as soon
as it respects squares. 

Now, the identity
$\varphi((a_i\otimes c_r)^2)=\varphi(a_i\otimes c_r)^2$ expands to
$$\sum_s\gamma_{r,s}\,\varphi_s(a_i^2)\otimes c_s
   =\sum_s\gamma_{r,s}\,\varphi_r(a_i)^2\otimes c_s,$$ so that
\begin{equation}\label{eq:star-layer}
  (c_r,c_{r'})\in\Gamma(C)\ \Longrightarrow\
  \varphi_r(a_i)^2=\varphi_{r'}(a_i^2)\quad(\text{for all}\ i).
\end{equation}
Writing $j=\sigma_r(i)$, and taking into account that $\varphi_r$ and $\varphi_{r'}$ are linear isomorphism, \autoref{eq:star-layer} reads
\begin{equation}\label{eq:star-layer-inverse}
  (c_r,c_{r'})\in\Gamma(C)\ \Longrightarrow\
  \varphi_r^{-1}(b_j)^2=\varphi_{r'}^{-1}(b_j^2)\quad(\text{for all}\ j).
\end{equation}
\smallskip
\noindent\emph{Step 1 (composition along edges).} For any sequence of edges
$(c_{r_1},c_{r_1'}),\dots,(c_{r_{2k+1}},c_{r_{2k+1}'})$ in $\Gamma(C)$, applying \autoref{eq:star-layer} and \autoref{eq:star-layer-inverse} alternately yields: 
\begin{equation}\label{eq:compose}
  \bigl(\varphi_{r_{2k+1}}\varphi_{r_{2k}}^{-1}\cdots\varphi_{r_1}\bigr)(a)^2
=\bigl(\varphi_{r_{2k+1}'}\varphi_{r_{2k}'}^{-1}\cdots\varphi_{r_1'}\bigr)(a^2).
\end{equation}
\smallskip

\noindent\emph{Step 2 (a balance-one walk and its height function).} By \autoref{lem:balance-one-cycle} choose a closed walk $P=(x_0,x_1,\dots,x_n=x_0)$ in $\Gamma(C)$ with $\b(P)=1$, and extend it $n$-periodically, $x_\nu:=x_{\nu\bmod n}$ for $\nu\in\Z$. Put
$$
P_\nu =
\begin{cases}
(x_0) & \nu = 0,\\
(x_0, x_1, \dots, x_\nu) & \nu \ge 1,\\
(x_0, x_{-1}, \dots, x_\nu) & \nu < 0,
\end{cases}
$$ 
and $g(\nu)=\b(P_\nu)$, so $g(0)=0$. Each step $x_\nu\to x_{\nu+1}$ is a single edge of $\Gamma(C)$ traversed forwards or backwards, and one period contributes $\b(P)=1$; hence
\begin{equation}\label{eq:gprops}
  g(\nu+1)=g(\nu)\pm1,\qquad g(\nu+kn)=g(\nu)+k .
\end{equation}
\smallskip

\noindent\emph{Step 3 (the zeros of $g$ interlace).} From \autoref{eq:gprops},
$$g(kn)=k\ \text{ and }\ g(kn+l)\ge k-l\ \text{ for }\ 0\le l<n,$$ 
so $g(\nu)>0$ for $\nu\gg0$ and, symmetrically, $g(\nu)<0$ for $\nu\ll0$; thus $g$ has finitely many zeros. Call a zero $\nu$ \emph{ascending} if $g(\nu+1)=1$ and \emph{descending} if $g(\nu-1)=1$. Since $g$ moves by unit steps and drifts from $-\infty$ to $+\infty$, the ascending zeros $\nu_1<\dots<\nu_{p+1}$ and descending zeros $\mu_1<\dots<\mu_p$ interlace,
\begin{equation}\label{eq:zeros_interlace}
\nu_1<\mu_1\le\nu_2<\mu_2\le\dots<\mu_p\le\nu_{p+1},
\end{equation}
with exactly one more ascending than descending zero (a zero is counted in both lists, $\mu_i=\nu_{i+1}$, precisely when $g$ has a local valley there).
\smallskip

\noindent\emph{Step 4 (assembling the isomorphism).} At an ascending zero
$\nu_i$ the step $x_{\nu_i}\to x_{\nu_i+1}$ is a forward edge, and at a
descending zero $\mu_i$ the step $x_{\mu_i}\to x_{\mu_i-1}$ is a forward
edge, so all the relevant pairs lie in $\Gamma(C)$. Form the alternating
composites
\[
  \Phi=\varphi_{x_{\nu_1}}\varphi_{x_{\mu_1}}^{-1}\cdots
        \varphi_{x_{\mu_p}}^{-1}\varphi_{x_{\nu_{p+1}}},
  \qquad
  \Phi'=\varphi_{x_{\nu_1+1}}\varphi_{x_{\mu_1-1}}^{-1}\cdots
        \varphi_{x_{\mu_p-1}}^{-1}\varphi_{x_{\nu_{p+1}+1}},
\]
each a monomial linear isomorphism $A\to B$. By Step~1 (with $k=p$),
$\Phi(a)^2=\Phi'(a^2)$ for all $a$. It remains to check $\Phi=\Phi'$. We compare both alternating words to a common reduced form.

Call a zero $\nu$ of $g$ an \emph{upward crossing} if $g(\nu-1)=-1$ and
$g(\nu+1)=1$, and a \emph{downward crossing} if $g(\nu-1)=1$ and
$g(\nu+1)=-1$. Since $g$ moves by unit steps from $-\infty$ to $+\infty$
(Step~3), the upward crossings $e_1<\dots<e_{s+1}$ and downward crossings
$\pi_1<\dots<\pi_s$ strictly interlace,
\begin{equation}\label{eq:relevant_crosses}
  e_1<\pi_1<e_2<\dots<\pi_s<e_{s+1},
\end{equation}
with one more upward than downward crossing. An upward crossing is in
particular an ascending zero and a downward crossing a descending zero, so
$\{e_j\}\subseteq\{\nu_i\}$ and $\{\pi_j\}\subseteq\{\mu_i\}$; conversely an
ascending zero $\nu_i$ has $g(\nu_i-1)=-1$, hence is an upward crossing,
unless it coincides with the preceding descending zero, and symmetrically
for $\mu_i$, thus:
\begin{equation}\label{eq:cross-vs-zero}
  \nu_i\in\{e_1,\dots,e_{s+1}\}\iff \nu_i\neq\mu_{i-1},
  \qquad
  \mu_i\in\{\pi_1,\dots,\pi_s\}\iff \mu_i\neq\nu_{i+1}.
\end{equation}

When $\nu_i=\mu_{i-1}$ the two indices label the
same vertex, $x_{\nu_i}=x_{\mu_{i-1}}$, so the adjacent factors
$\varphi_{x_{\mu_{i-1}}}^{-1}\varphi_{x_{\nu_i}}=\mathrm{id}$ cancel in
$\Phi$. Deleting every such pair leaves precisely the factors indexed by the
crossings in \autoref{eq:relevant_crosses}:
\begin{equation}\label{eq:Phi-reduced}
  \Phi=\varphi_{x_{e_1}}\varphi_{x_{\pi_1}}^{-1}\cdots
       \varphi_{x_{\pi_s}}^{-1}\varphi_{x_{e_{s+1}}}=:\Phi''.
\end{equation}

Now, put $\nu_i':=\nu_i+1-n$ and
$\mu_i':=\mu_i-1-n$. By \autoref{eq:gprops},
\[
  g(\nu_i')=g(\nu_i+1)-1=0,\qquad g(\nu_i'-1)=g(\nu_i)-1=-1,
\]
\[
  g(\mu_i')=g(\mu_i-1)-1=0,\qquad g(\mu_i'+1)=g(\mu_i)-1=-1,
\]
and $n$-periodicity of $x_\bullet$ gives $x_{\nu_i'}=x_{\nu_i+1}$ and
$x_{\mu_i'}=x_{\mu_i-1}$, so
\[
  \Phi'=\varphi_{x_{\nu_1'}}\varphi_{x_{\mu_1'}}^{-1}\cdots
        \varphi_{x_{\mu_p'}}^{-1}\varphi_{x_{\nu_{p+1}'}}.
\]

The $\nu_i'$ are exactly
the zeros $\alpha$ of $g$ with $g(\alpha-1)=-1$: writing $\alpha=\nu+1-n$,
that is, $\nu=\alpha-1+n$, \autoref{eq:gprops} gives
$g(\nu)=g(\alpha-1)+1$ and $g(\nu+1)=g(\alpha)+1$, so
$\bigl(g(\alpha)=0,$ and $g(\alpha-1)=-1\bigr)$ holds simultaneously if and only if $\nu$ is an ascending
zero; hence there are $p+1$ such $\alpha$, namely the $\nu_i'$.
Symmetrically the $\mu_i'$ are all the zeros $\alpha$ with $g(\alpha+1)=-1$.
Exactly as in \autoref{eq:zeros_interlace} they interlace,
\[
  \nu_1'\le\mu_1'<\nu_2'\le\dots\le\mu_p'<\nu_{p+1}',
\]
and $\nu_i'\neq\mu_i'$ holds if and only if $\nu_i'$ is an upward crossing and $\mu_i'$
a downward crossing, that is, if and only if $\nu_i'\in\{e_j\}$ and $\mu_i'\in\{\pi_j\}$.

\emph{Reduction of $\Phi'$.} When $\nu_i'=\mu_i'$ the adjacent factors
$\varphi_{x_{\nu_i'}}\varphi_{x_{\mu_i'}}^{-1}=\mathrm{id}$ cancel in
$\Phi'$; deleting them leaves the same crossing-indexed word as in
\autoref{eq:Phi-reduced},
\[
  \Phi'=\varphi_{x_{e_1}}\varphi_{x_{\pi_1}}^{-1}\cdots
        \varphi_{x_{\pi_s}}^{-1}\varphi_{x_{e_{s+1}}}=\Phi''.
\]
Hence $\Phi=\Phi''=\Phi'$. Together with $\Phi(a)^2=\Phi'(a^2)$ from Step~1
this yields $\Phi(a)^2=\Phi(a^2)$ for every $a\in A$, so the monomial
bijection $\Phi$ preserves squares and is therefore an algebra isomorphism
$A\iso B$.
\end{proof}

\begin{remark}\label{rem:indec-resolved}
\autoref{thm:cancel-balance-one} is the converse, for indecomposable factors, of \autoref{thm:zero-divisors} under mild hypothesis: an indecomposable $C$ with
$\b(C)=1$ cancels relative to $C_m$, whereas $\b(C)>1$ makes it a zero-divisor. 
\end{remark}

\section{Indecomposability of Tensor Products}\label{sec:decomposition_tensor}

As noted in \autoref{sec:grothendieck}, understanding the ring generators of $\G(\Evol_\K)$ requires identifying when the tensor product of two indecomposable algebras remains indecomposable. We first show that the balance of the factors provides lower bounds for the number of summands in the optimal direct-sum decomposition into indecomposable evolution ideals \cite[Theorem 5.11]{CabreraSilesVelasco2016}.

\begin{theorem}\label{thm:ideals-tensor}
Let $X_1,X_2\in\Evol_\K$ be indecomposable, $d_i=\b(X_i)$, and $d=\gcd(d_1,d_2)$.
Then $$X_1\otimes X_2=\bigoplus_{c=0}^{d-1}I_c$$ where each $I_c$ is a non-trivial evolution ideal of $X_1\otimes X_2$.
\end{theorem}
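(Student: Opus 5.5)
We will use the grading maps of \autoref{lem:balance_by_grading} on the two associated graphs and split the tensor product according to the difference of the two gradings. Fix natural bases $B_1=\{e_i\}_{i\in I}$ of $X_1$ and $B_2=\{f_j\}_{j\in J}$ of $X_2$. Since each $X_k$ is indecomposable and non-degenerate, $\Gamma(X_k,B_k)$ is connected and has no sinks, as in the proof of \autoref{thm:balance-prime}. Because $d\mid d_k=\b(X_k)$ (using \autoref{thm:balance_invariant} to identify $\b(X_k)$ with the balance of the graph), \autoref{lem:balance_by_grading} yields surjective maps
\[
h^{(1)}\colon I\to\Z/d,\qquad h^{(2)}\colon J\to\Z/d
\]
with $h^{(k)}$ increasing by $1$ along every edge.

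Next we define, for $c\in\Z/d$,
\[
I_c=\operatorname{span}\{e_i\otimes f_j : h^{(1)}(i)-h^{(2)}(j)\equiv c \pmod d\}.
\]
The natural basis $B_1\otimes B_2$ of $X_1\otimes X_2$ is partitioned by the value of $h^{(1)}(i)-h^{(2)}(j)$, so $X_1\otimes X_2=\bigoplus_{c}I_c$ as vector spaces. Each $I_c$ is spanned by a subset of a natural basis, hence is an evolution subalgebra with natural basis.

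To see that each $I_c$ is an ideal, note that products of distinct basis vectors vanish. It therefore suffices to check that $(e_i\otimes f_j)^2=e_i^2\otimes f_j^2$ lies in $I_c$ whenever $e_i\otimes f_j\in I_c$. Expanding $e_i^2\otimes f_j^2=\sum_{k,l}w_{ik}w'_{jl}\,e_k\otimes f_l$, every term with nonzero coefficient corresponds to edges $(i,k)$ and $(j,l)$. Along these edges both gradings increase by $1$, so $h^{(1)}(k)-h^{(2)}(l)=h^{(1)}(i)-h^{(2)}(j)\equiv c$. Then for any $x\in X_1\otimes X_2$ and $y\in I_c$, writing both in the basis, $xy$ only involves products $b\,b$ with $b\in I_c$, so $xy\in I_c$. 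Thus each $I_c$ is an evolution ideal, and $I_cI_{c'}=0$ for $c\ne c'$.

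The remaining point, and the only one needing care, is non-triviality: each $I_c$ must be nonzero. Surjectivity of $h^{(1)}$ gives, for any $j_0\in J$ and any $c$, an index $i$ with $h^{(1)}(i)=c+h^{(2)}(j_0)$, so $e_i\otimes f_{j_0}\in I_c$ is nonzero. Since the surjectivity in \autoref{lem:balance_by_grading} relied on the absence of sinks, non-degeneracy is exactly what is used here. Finally, each $I_c$ is non-degenerate, because its basis vectors have nonzero squares by the non-degeneracy of $X_1\otimes X_2$ noted in \autoref{sec:monoidal}. This gives the claimed decomposition into $d$ non-trivial evolution ideals.
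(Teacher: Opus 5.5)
Your proof is correct and follows the paper's argument essentially verbatim. You use the grading maps to $\Z/d$ from \autoref{lem:balance_by_grading}, note that the difference $h^{(1)}(i)-h^{(2)}(j)$ is constant along edges of the product graph so that its fibres span evolution ideals, and obtain non-triviality from surjectivity; your explicit element $e_i\otimes f_{j_0}\in I_c$ spells out what the paper only asserts. One small expository slip: in your second paragraph, being spanned by part of a natural basis does not by itself make $I_c$ a subalgebra. That claim only follows from the closure computation in your third paragraph, which you do supply.
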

\begin{proof}
Let $B_i$ be a natural basis of $X_i$, with associated structure matrix $(w^{(i)}_{rs})$, for $i=1,2$.
First, recall that $$\Gamma(X_1\otimes X_2,B_1\otimes B_2)=\Gamma(X_1,B_1)\times \Gamma(X_2,B_2).$$ 
Set $n_i=\dim(X_i)$. Since $d=\gcd(d_1,d_2)$, \autoref{lem:balance_by_grading} yields surjective maps $$h_i\colon\{1,\ldots,n_i\}\to \Z/d$$ such that, whenever $(r,s)\in\Gamma(X_i,B_i)$ one has $$h_i(s)=h_i(r)+1.$$ 

Define $H(r,s)=h_1(r)-h_2(s)\in\Z/d.$ Every edge
$\big((r,s),(k,l)\big)$ in $\Gamma(X_1\otimes X_2,B_1\otimes B_2)$ then satisfies
\[
  H(k,l)=\bigl(h_1(r)+1\bigr)-\bigl(h_2(s)+1\bigr)=H(r,s),
\]
so $H$ is constant on each connected component of the graph. For $c\in\Z/d$ set
\[
  I_c=\operatorname{span}_\K\{\,b_{1,r}\otimes b_{2,s}\in B_1\otimes B_2 : H(r,s)=c\,\}.
\]
For a basis vector in $I_c$,
$(b_{1,r}\otimes b_{2,s})^2=\sum_{k,l}w^{(1)}_{rk}w^{(2)}_{sl}\,(b_{1,k}\otimes b_{2,l})$, and every
contributing $(k,l)$ has $H(k,l)=c$, so the square lies in $I_c$. Moreover products
of distinct natural basis vectors vanish, so each $I_c$ is an evolution
ideal. Since $B_1\otimes B_2$ is partitioned by the values of $H$,
\[
  X_1\otimes X_2=\bigoplus_{c\in\Z/d} I_c,
\]
and surjectivity of maps $h(i)_d$ makes all $I_c$ non-empty.
\end{proof}

\begin{example}\label{lem:CnCm}
We can follow along the lines in the proof of \autoref{thm:ideals-tensor} to completely describe the tensor product of cyclic evolution algebras (compare
\cite[Eq.~(5)]{Lovasz}):
$$C_n\otimes C_m\iso C_{\lcm(n,m)}\otimes\K^{\gcd(n,m)}.$$
Equivalently,
$C_n\otimes C_m$ is a direct sum of $\gcd(n,m)$ copies of
$C_{\lcm(n,m)}$.

Indeed, $\b(C_n)=n$, $\b(C_m)=m$, and if $d=\gcd(n,m)$, then the maps $h_{i,d}$ are just the $\operatorname{mod} d$ reduction and therefore, for any $c\in\Z/d$ one has:
\begin{align*}
I_c &=\operatorname{span}_\K\{c_{kd+c}\otimes c_s : k=0,\ldots, \frac{n}{d}-1;\, s=0,\ldots,m-1;\, c=s\mod{d}\}\\
&\cong C_{\frac{n}{d} m}=C_{\lcm(n,m)}.
\end{align*}
Therefore
$$C_n\otimes C_m =\bigoplus_{c\in\Z/d} I_c \cong \bigoplus_{c\in\Z/d} C_{\lcm(n,m)} \iso C_{\lcm(n,m)}\otimes\K^{\gcd(n,m)}.$$
\end{example}

Notice that \autoref{thm:ideals-tensor} identifies a necessary condition for the tensor product to be indecomposable.

\begin{corollary}\label{cor:necessary_indecomposable}
Let $X_1,X_2\in\Evol_\K$ be indecomposable, $d_i=\b(X_i)$, and $d=\gcd(d_1,d_2)$. If $X_1\otimes X_2$ is indecomposable, then $d=1$.
\end{corollary}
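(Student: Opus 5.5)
This is the contrapositive of \autoref{thm:ideals-tensor}, so the plan is to apply that theorem and read off the number of summands. Suppose $d=\gcd(d_1,d_2)>1$. By \autoref{thm:balance_invariant} and \autoref{rm:definition_balance_not_B}, the balances $d_i=\b(X_i)$ are well defined. \autoref{thm:ideals-tensor} applies because $X_1,X_2$ are indecomposable and non-degenerate. It gives a decomposition
$$X_1\otimes X_2=\bigoplus_{c=0}^{d-1}I_c$$
into $d\ge 2$ evolution ideals, each non-trivial. Each $I_c$ is spanned by a subset of the natural basis $B_1\otimes B_2$, and products across different $I_c$ vanish.

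Two things must be checked for this to contradict indecomposability.

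First, the decomposition is a genuine direct sum of evolution algebras, with at least two nonzero summands. Non-triviality of every $I_c$ is exactly the surjectivity statement in \autoref{thm:ideals-tensor}. That surjectivity comes from \autoref{lem:balance_by_grading}, whose hypotheses hold: the graphs $\Gamma(X_i,B_i)$ are connected because $X_i$ is indecomposable, and have no sinks because $X_i$ is non-degenerate. Hence there are at least two nonzero pieces. Grouping them as $I_0\oplus\big(\bigoplus_{c\neq0}I_c\big)$ writes $X_1\otimes X_2$ as a direct sum of two nonzero evolution ideals, each spanned by part of a natural basis.

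Second, this splitting really witnesses decomposability in the sense used by \cite[Theorem 5.11]{CabreraSilesVelasco2016}. Both pieces are evolution ideals: $(b_{1,r}\otimes b_{2,s})^2$ stays in the same $I_c$, as shown in the proof of \autoref{thm:ideals-tensor}. Since $X_1\otimes X_2$ is non-degenerate (\autoref{sec:monoidal}), each $I_c$ is itself a non-degenerate evolution algebra. So the optimal decomposition of $X_1\otimes X_2$ has at least $d\ge2$ summands, contradicting indecomposability. Hence $d=1$.

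There is no real obstacle. The only point of care is the bookkeeping that "indecomposable" means no nontrivial decomposition into evolution ideals. This is immediate here because the $I_c$ are spanned by subsets of a natural basis and products across distinct $I_c$ vanish.
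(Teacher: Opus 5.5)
Your proposal is correct and follows the paper's own reasoning: the corollary is the contrapositive of \autoref{thm:ideals-tensor}, because for $d>1$ the $d$ non-trivial evolution ideals $I_c$ split $X_1\otimes X_2$ non-trivially, so it is decomposable. Each $I_c$ is spanned by part of the natural basis $B_1\otimes B_2$ and is closed under squaring. Your extra checks (surjectivity of the gradings via \autoref{lem:balance_by_grading}, and that the $I_c$ are genuine evolution ideals) only unpack what the proof of that theorem already provides.
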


The converse of \autoref{cor:necessary_indecomposable} holds, and we can now give a complete characterization of indecomposable tensor products. The following is a reinterpretation of \cite[Theorem 15]{ChenChen} in evolution-algebra form. 

\begin{theorem}
\label{thm:indecomp-tensor}
Let $X_1,X_2\in\Evol_\K$ be indecomposable, and $d_i=\b(X_i)$.
Then $X_1\otimes X_2$ is indecomposable if and only if
$\gcd(d_1,d_2)=1$, and in that case
$\b(X_1\otimes X_2)=d_1d_2$.
\end{theorem}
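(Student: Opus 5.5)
By \autoref{cor:necessary_indecomposable} the condition $\gcd(d_1,d_2)=1$ is necessary, so two things remain: indecomposability when $\gcd(d_1,d_2)=1$, and the value of the balance. I would work entirely with graphs. Fix natural bases $B_1,B_2$, so that $\Gamma(X_1\otimes X_2,B_1\otimes B_2)=\Gamma_1\times\Gamma_2$ with $\Gamma_i=\Gamma(X_i,B_i)$. By \autoref{thm:balance_invariant} nothing depends on these choices. I would use the standard fact, already used in the proof of \autoref{thm:balance-prime}, that a non-degenerate evolution algebra is indecomposable if and only if its graph is connected. Both $\Gamma_i$ are connected and have no sinks. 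The whole statement then becomes the combinatorial claim of \cite[Theorem 15]{ChenChen}: $\Gamma_1\times\Gamma_2$ is connected if and only if $\gcd(d_1,d_2)=1$, and in that case its balance is $d_1d_2$.

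\emph{Key observation.} A walk in $\Gamma_1\times\Gamma_2$ is exactly a pair of walks $(\gamma_1,\gamma_2)$ in $\Gamma_1,\Gamma_2$ of the same length and with the same forward/backward pattern $\varepsilon\in\{\pm1\}^N$. Its balance equals $\b(\gamma_1)=\b(\gamma_2)$. Consequently every cycle of the product projects to cycles of the same balance in both factors. Hence $d_1\mid\b$ and $d_2\mid\b$ for every product cycle. Once connectivity is known, this gives $d_1d_2=\lcm(d_1,d_2)\mid\b(X_1\otimes X_2)$.

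\emph{Main step: a pattern normal form.} I would prove the following for a connected sink-free graph $\Gamma$ of balance $d$ with height function $h=h_d$ from \autoref{lem:balance_by_grading}. For vertices $u,v$ and every integer $b\equiv h(v)-h(u)\pmod d$, there is a walk from $u$ to $v$ of balance $b$ whose pattern is a \emph{canonical} one, depending only on $b$ and a large parameter $N$. My first choice of canonical pattern is $(+)^{N}(-)^{N}\cdots$ followed by $(+)^{b}$-type blocks, concretely $\bigl((+)^N(-)^N\bigr)^{K}(+)^{N+b}(-)^{N}$ with $N,K\gg0$. The construction would proceed in three moves.
\begin{enumerate}
\item Start from any walk of balance $b$. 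Such walks exist because all values in the residue class $h(v)-h(u)+d\Z$ are realized by inserting the cycle of balance $d$ from \autoref{lem:balance-one-cycle}.
\item Lengthen forward runs freely, since there are no sinks: from any vertex one can go forward $N$ steps and come back along the same edges.
\item Absorb each isolated backward step into a long zigzag.
\end{enumerate}
This is where I expect the real difficulty. Backward steps cannot be inserted at a source, so the padding must be chosen carefully, and one must check that all patterns can be brought to the \emph{same} canonical shape in both factors simultaneously. If this direct normalization becomes too messy, I would fall back on the Lov\'asz-style argument \cite[pp.~153--155]{Lovasz} that Chen and Chen use: realize a target balance difference via long directed cycles of lengths divisible by $d_i$ and pad with $(+)^L(-)^L$ excursions along a fixed directed cycle.

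\emph{Concluding from the normal form.} Given $(u_1,u_2)$ and $(v_1,v_2)$, by the Chinese remainder theorem (here $\gcd(d_1,d_2)=1$ is used) choose $b$ with $b\equiv h_1(v_1)-h_1(u_1)\pmod{d_1}$ and $b\equiv h_2(v_2)-h_2(u_2)\pmod{d_2}$. Take the same large $N,K$ for both factors. The normal form yields walks in $\Gamma_1$ and $\Gamma_2$ with identical pattern, and this pair is a walk in the product. Hence $\Gamma_1\times\Gamma_2$ is connected and $X_1\otimes X_2$ is indecomposable. Applying the same construction with $u_i=v_i$ and $b=d_1d_2$ produces a product cycle of balance $d_1d_2$. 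Together with the divisibility from the key observation, this gives $\b(X_1\otimes X_2)=d_1d_2$.
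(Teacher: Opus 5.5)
Your reduction is exactly the paper's proof. Non-degeneracy makes each $\Gamma_i$ sink-free, $\Gamma(X_1\otimes X_2,B_1\otimes B_2)=\Gamma_1\times\Gamma_2$, indecomposability of a non-degenerate algebra is connectedness of its graph, and balance is the Chen--Chen weight. At that point the paper simply invokes \cite[Theorem 15]{ChenChen}. Your key observation is also correct, and it gives $\lcm(d_1,d_2)\mid\b(\Gamma_1\times\Gamma_2)$ directly. The gap is in the self-contained argument you offer in place of the citation. The pattern normal form carries everything that remains: connectedness, and a product cycle of balance $d_1d_2$. It is not proved, and for the canonical pattern you propose it is false.

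Take the algebra with natural basis $e_y,e_{a_0},e_{a_1},e_c$ and $e_y^2=e_{a_0}+e_c$, $e_{a_0}^2=e_{a_1}$, $e_{a_1}^2=e_{a_0}$, $e_c^2=e_c$. Its graph is connected and sink-free, and the loop at $c$ gives balance $1$. So your claim asserts, for every $b$, a walk from $a_0$ to $c$ with pattern $\bigl((+)^N(-)^N\bigr)^K(+)^{N+b}(-)^N$.

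\begin{itemize}
\item On $S=\{y,a_0,a_1\}$ set $\phi(a_0)=0$ and $\phi(a_1)=\phi(y)=1$ in $\Z/2$. Every edge inside $S$ raises $\phi$ by $1$. Hence along a walk from $a_0$ staying in $S$, the current vertex $x$ satisfies $\phi(x)\equiv\beta \pmod 2$, where $\beta$ is the balance of the walk so far.
\item The only edge leaving $S$ is $y\to c$. Since $y$ is a source, the walk can be at $y$ and continue only at a junction where a backward step is followed by a forward step.
\item In your pattern every such junction comes right after a complete block $(+)^N(-)^N$, where $\beta=0$. So the walk is at $a_0$ there, never at $y$, and $c$ is never reached, for any $N,K,b$.
\end{itemize}

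This obstruction is not something the padding moves (2)--(3) can fix. To pass through $y$, a direction change must occur at odd prefix balance, which is a local residue that $\b(\Gamma)=1$ does not see. The common pattern therefore has to depend on the endpoints and on the fine structure of both factors. Showing that patterns compatible with both $\Gamma_1$ and $\Gamma_2$ exist whenever $\gcd(d_1,d_2)=1$ is precisely the content of Chen--Chen's theorem, and your fallback only names it.

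As phrased, the fallback would not suffice either, because directed cycles need not carry the balance. For example, $s\to p$, $s\to q$, $p\to q\to p$ has balance $1$ (the cycle $s\to p\to q$ closed back to $s$ along $s\to q$), but its only directed cycle has length $2$. Either cite \cite[Theorem 15]{ChenChen}, as the paper does, or supply that argument in full.
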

\begin{proof}
Recall that the algebra $X_i$ is non-degenerate. Hence, the graph $\Gamma(X_i,B_i)$ has no sinks and has non-trivial balance for any natural basis $B_i$ of $X_i$. Moreover, the notion of balance used here agrees with the notion of weight in \cite{ChenChen} (see \autoref{rm:balance=weight}). Since, for non-degenerate evolution algebras, indecomposability is equivalent to weak connectedness of the associated graph \cite{CabreraSilesVelasco2016}, the statement follows directly from \cite[Theorem 15]{ChenChen}.
\end{proof}

There is also a multiplicative description of indecomposability for
iterated tensor products, the evolution-algebra form of
\cite[Corollary 16]{ChenChen}, which we record here.

\begin{proposition}\label{prop:mfold}
Let $X_1,\dots,X_k\in\Evol_\K$ be indecomposable. Then $X_1\otimes\cdots\otimes X_k$ is indecomposable if
and only if the balances $b(X_1),\dots,b(X_k)$ are pairwise coprime, and
in that case
\[
  b\bigl(X_1\otimes\cdots\otimes X_k\bigr)=\prod_{i=1}^{k} b(X_i).
\]
\end{proposition}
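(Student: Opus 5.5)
We argue by induction on $k$, using \autoref{thm:indecomp-tensor} as both the base case and the inductive step, together with the associativity of $\otimes$ from \autoref{prop:monoidal}. For $k=1$ there is nothing to prove. For $k=2$ the statement is exactly \autoref{thm:indecomp-tensor}. Assume then $k\ge 3$ and put $Y=X_1\otimes\cdots\otimes X_{k-1}$, so that $X_1\otimes\cdots\otimes X_k\iso Y\otimes X_k$.

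\emph{Sufficiency.} Suppose the balances are pairwise coprime. The balances $\b(X_1),\dots,\b(X_{k-1})$ are in particular pairwise coprime, so by the induction hypothesis $Y$ is indecomposable and $\b(Y)=\prod_{i<k}\b(X_i)$. Since $\b(X_k)$ is coprime to each factor of this product, it is coprime to the product itself, so $\gcd(\b(Y),\b(X_k))=1$. Now \autoref{thm:indecomp-tensor} applied to the indecomposable pair $Y,X_k$ shows that $Y\otimes X_k$ is indecomposable with balance $\b(Y)\b(X_k)=\prod_{i=1}^k\b(X_i)$. Here we use that the balance of $Y$ is intrinsic (\autoref{thm:balance_invariant}), so that it may be computed on any natural basis, in particular on a tensor basis.

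\emph{Necessity.} This is the step requiring care, since we cannot assume a priori that $Y$ is indecomposable. Suppose $\gcd(\b(X_i),\b(X_j))=d>1$ for some $i\neq j$. By commutativity and associativity of $\otimes$ (\autoref{prop:monoidal}), we may reorder the factors as $(X_i\otimes X_j)\otimes Z$, where $Z$ is the tensor product of the remaining factors. By \autoref{thm:ideals-tensor}, $X_i\otimes X_j=\bigoplus_{c\in\Z/d}I_c$ with each $I_c$ a nonzero evolution ideal. Distributivity (\autoref{prop:monoidal}) then gives
\[
  X_1\otimes\cdots\otimes X_k\iso\bigoplus_{c\in\Z/d}(I_c\otimes Z),
\]
a direct sum of at least $d\ge2$ nonzero non-degenerate evolution algebras. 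Each summand $I_c\otimes Z$ is nonzero because tensor products of nonzero spaces are nonzero and non-degeneracy is preserved (\autoref{sec:monoidal}). By uniqueness of the optimal decomposition (\cite[Theorem 5.11]{CabreraSilesVelasco2016}), the product is therefore decomposable.

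The only delicate point is bookkeeping in the sufficiency direction: the induction hypothesis must be strong enough to deliver both indecomposability and the exact balance of $Y$, which is why the balance formula is proved simultaneously with indecomposability.
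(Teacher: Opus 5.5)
Your proof is correct, but it takes a different route from the paper. The paper gives no argument of its own for \autoref{prop:mfold}: it records the statement as the evolution-algebra form of \cite[Corollary 16]{ChenChen}. It relies on the same dictionary used for \autoref{thm:indecomp-tensor}: balance is Chen--Chen's weight, indecomposability of a non-degenerate algebra is weak connectedness of $\Gamma(X,B)$, and the graph of a tensor basis is the tensor product of the graphs.

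You instead derive the $k$-fold statement from results stated inside the paper. Sufficiency goes by induction on $k$ from the two-factor case, carrying the balance formula along so that $\gcd(\b(Y),\b(X_k))=1$ can be fed back into \autoref{thm:indecomp-tensor}. Necessity needs no induction: you push the splitting of \autoref{thm:ideals-tensor} for a non-coprime pair through distributivity (\autoref{prop:monoidal}).

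The citation is shorter, but it leans on an external graph-theoretic corollary and on applying the dictionary to a $k$-fold product. Your argument needs only the case $k=2$ and the paper's own ideal decomposition. Its necessity half is also quantitative: it shows that $X_1\otimes\cdots\otimes X_k$ splits into at least $\gcd(\b(X_i),\b(X_j))$ nonzero evolution ideals for every $i\neq j$.

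One minor point: you do not need uniqueness of the optimal decomposition to conclude decomposability. The $I_c\otimes Z$ are already nonzero evolution ideals with pairwise zero products. Uniqueness only matters if you want to turn this into a lower bound on the number of indecomposable summands.
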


\section{Zero-divisors among direct sums of cyclic algebras}

\autoref{conj:balance} predicts that an algebra of $\Evol_\K$ is a zero-divisor
in $\G(\Evol_\K)$ precisely when every one of its indecomposable direct
summands has balance strictly greater than $1$. The \emph{sufficiency}
direction holds in full generality: \autoref{thm:zero-divisors} produces a zero-divisor
relation for any direct sum $\bigoplus_j X_j$ all of whose summands have
balance $>1$, by combining the relations of the summands through
\autoref{lem:lovasz}. The \emph{necessity} direction is known only for indecomposable algebras (\autoref{rem:indec-resolved}, via the cancellation \autoref{thm:cancel-balance-one}), and its extension to reducible
algebras is a natural open problem. In this section we settle it for
arbitrary direct sums of cyclic algebras.

We now introduce the main object of study of this section.

\begin{definition}\label{def:cyclic-subring}
Let $\Cyc\subseteq\G(\Evol_\K)$ be the subring generated by the classes of the
cyclic algebras $\{[C_n]:n\ge 1\}$, where $C_1=\K$ is the unit.
\end{definition}

By \autoref{lem:CnCm} the product of two generators is a single generator up
to an integer factor,
\begin{equation}\label{eq:fusion}
  [C_n]\,[C_m]=\gcd(n,m)\,[C_{\lcm(n,m)}],\qquad [C_1]=1,
\end{equation}
so products of generators never leave the $\mathbb{Z}$-span of
$\{[C_n]\}$. As the $C_n$ are pairwise non-isomorphic indecomposables,
their classes are part of the free $\mathbb{Z}$-basis of $\G(\Evol_\K)$; hence
\[
  \Cyc=\bigoplus_{n\ge 1}\mathbb{Z}\,[C_n]
\]
is free abelian on $\{[C_n]\}$, with multiplication given by \autoref{eq:fusion}.

For a positive integer $F$, write $\tau(F)$ for its number of divisors, and define
\[
  \Cyc_F=\bigoplus_{d\mid F}\mathbb{Z}\,[C_d]\subseteq \Cyc.
\]
Since $d\mid F$ and $e\mid F$ imply that $\lcm(d,e)\mid F$, each $\Cyc_F$ is a
subring of $\Cyc$, free of rank $\tau(F)$, and $\Cyc=\bigcup_F \Cyc_F$.

We now construct \emph{balance characters} for $\Cyc$.

\begin{lemma}\label{lem:characters}
For each integer $m\ge 1$ there is a unique ring homomorphism
$\chi_m\colon \Cyc\to\mathbb{Z}$ with
\[
  \chi_m\bigl([C_n]\bigr)=
  \begin{cases} n, & n\mid m,\\[2pt] 0, & n\nmid m.\end{cases}
\]
\end{lemma}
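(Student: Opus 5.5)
Since $\Cyc$ is free abelian on $\{[C_n]:n\ge1\}$, as established just before the statement, there is exactly one additive map $\chi_m\colon\Cyc\to\Z$ with the prescribed values on the basis. This settles uniqueness: any ring homomorphism with these values is in particular additive, so it coincides with this map. It then remains to check that $\chi_m$ is multiplicative and sends $1=[C_1]$ to $1$.

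The unit condition is immediate. Since $1\mid m$, we get $\chi_m([C_1])=1$.

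For multiplicativity, bilinearity of the product and additivity of $\chi_m$ reduce the problem to products of basis elements. By \autoref{eq:fusion} it therefore suffices to show, for all $n,k\ge1$,
\[
  \chi_m([C_n])\,\chi_m([C_k])=\gcd(n,k)\,\chi_m\bigl([C_{\lcm(n,k)}]\bigr).
\]
We split into two cases according to whether both $n$ and $k$ divide $m$.

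\emph{Case 1: $n\mid m$ and $k\mid m$.} Then $\lcm(n,k)\mid m$, so the right-hand side equals $\gcd(n,k)\lcm(n,k)=nk$. The left-hand side is also $nk$.

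\emph{Case 2: $n\nmid m$ or $k\nmid m$.} The left-hand side is $0$. Moreover $\lcm(n,k)\nmid m$, because $\lcm(n,k)\mid m$ would force both $n\mid m$ and $k\mid m$. Hence the right-hand side is $0$ as well.

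This proves multiplicativity on basis elements, and therefore on all of $\Cyc$.

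The computation itself is trivial. The only real point is to make sure that \autoref{eq:fusion} genuinely describes the multiplication in $\Cyc$ and that $\{[C_n]\}$ is a $\Z$-basis. Both facts are stated before the lemma: the first follows from \autoref{lem:CnCm}, and the second from uniqueness of the indecomposable decomposition together with the $C_n$ being pairwise non-isomorphic indecomposables. So I expect no real obstacle beyond invoking these facts.
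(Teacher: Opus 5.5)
Your proposal is correct and follows the paper's proof: you define $\chi_m$ on the $\Z$-basis $\{[C_n]\}$, get uniqueness from additivity, and check the unit and multiplicativity on generators via \autoref{eq:fusion}. The key facts are the same as the paper's, namely $\gcd(n,k)\lcm(n,k)=nk$ and $\lcm(n,k)\mid m \iff (n\mid m \text{ and } k\mid m)$.
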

\begin{proof}
Define $\chi_m$ on the basis by the formula given in the statement, and extend it $\mathbb{Z}$-linearly. Uniqueness is then immediate. It remains to verify that $\chi_m$ is a ring homomorphism.

Unitality is $\chi_m([C_1])=1$. For multiplicativity it suffices to check the
generators. Using \autoref{eq:fusion}, we obtain
\[
  \chi_m\bigl([C_n][C_{n'}]\bigr)
  =
  \begin{cases}
\gcd(n,n')\,\lcm(n,n'), &\text{if $\lcm(n,n')\mid m$,}\\
0, &\text{otherwise.}
  \end{cases}
\]
while
\[
  \chi_m\bigl([C_n]\bigr)\chi_m\bigl([C_{n'}]\bigr)
  =
  \begin{cases}
nn', &\text{if $n\mid m$ and $n'\mid m$,}\\
0, &\text{otherwise.}
  \end{cases}  
\]
These agree because $\gcd(n,n')\lcm(n,n')=nn'$, and $\lcm(n,n')\mid m$ if and only if $n\mid m$ and
$n'\mid m$.
\end{proof}

The character $\chi_m$ records the total dimension contributed by the cyclic summands whose balance divides $m$. In particular, $\chi_1([X])$ counts the copies of the unit algebra $\K$ occurring as summands of $X$. These characters diagonalize the cyclic subring after tensoring with $\mathbb{Q}$.

\begin{proposition}\label{prop:R-split}
For every $F\ge 1$, the map
$$\begin{array}{rcl}
\Phi_F\colon \Cyc_F\otimes\mathbb{Q} & \longrightarrow & \prod_{m\mid F}\mathbb{Q},\\
\xi & \longmapsto & \bigl(\chi_m(\xi)\bigr)_{m\mid F}
\end{array}$$
is an isomorphism of $\mathbb{Q}$-algebras. Consequently
$\Cyc_F\otimes\mathbb{Q}\cong\mathbb{Q}^{\tau(F)}$ is a product of fields,
and $\Cyc\otimes\mathbb{Q}$ is reduced.
\end{proposition}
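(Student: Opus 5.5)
The plan is to show that the $\chi_m$ with $m\mid F$ restrict to $\mathbb{Q}$-algebra homomorphisms on $\Cyc_F\otimes\mathbb{Q}$, and that the matrix of $\Phi_F$ in the basis $\{[C_d]\}_{d\mid F}$ is invertible. By \autoref{lem:characters} each $\chi_m$ is a unital ring homomorphism $\Cyc\to\mathbb{Z}$. Restricting to the subring $\Cyc_F$ and extending scalars gives a $\mathbb{Q}$-algebra homomorphism $\Cyc_F\otimes\mathbb{Q}\to\mathbb{Q}$. Hence $\Phi_F$ is a $\mathbb{Q}$-algebra homomorphism into the product algebra $\prod_{m\mid F}\mathbb{Q}$, with componentwise operations. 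Both sides have $\mathbb{Q}$-dimension $\tau(F)$: the source because $\Cyc_F$ is free of rank $\tau(F)$, the target by construction. It therefore suffices to show that $\Phi_F$ is injective, or equivalently that its square matrix is invertible.

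The matrix is indexed by pairs $(m,d)$ of divisors of $F$, with entries
\[
  T_{m,d}=\chi_m([C_d])=\begin{cases} d,& d\mid m,\\ 0,& d\nmid m.\end{cases}
\]
Order the divisors of $F$ compatibly with divisibility, for instance by increasing size. Then $T_{m,d}\neq 0$ forces $d\mid m$, hence $d\le m$, so $T$ is lower triangular. Its diagonal entries are $T_{d,d}=d\neq 0$, so $\det T=\prod_{d\mid F}d\neq 0$ and $T$ is invertible over $\mathbb{Q}$. An equivalent way to say this is that $T$ is the incidence (zeta) matrix of the divisor poset with the columns scaled by $d$. Its inverse is then given explicitly by Möbius inversion,
\[
  [C_d]\mapsto \frac1d\sum_{d\mid m,\ m\mid F}\mu(m/d)\,\delta_m ,
\]
where $\delta_m$ denotes the $m$-th primitive idempotent. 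This expresses the primitive idempotents of $\Cyc_F\otimes\mathbb{Q}$, although the triangularity argument alone already suffices.

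For the consequences: $\Cyc_F\otimes\mathbb{Q}\cong\mathbb{Q}^{\tau(F)}$ is a finite product of fields, and hence reduced. For $\Cyc\otimes\mathbb{Q}$, observe that tensoring with $\mathbb{Q}$ is exact, so $\Cyc\otimes\mathbb{Q}=\bigcup_F\Cyc_F\otimes\mathbb{Q}$, and this union is directed since $\Cyc_F,\Cyc_{F'}\subseteq\Cyc_{\lcm(F,F')}$. Any nilpotent element lies in some $\Cyc_F\otimes\mathbb{Q}$, which is reduced, so the element is zero.

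There is no serious obstacle here. The only points needing care are confirming that $\Phi_F$ respects multiplication, which follows directly from \autoref{lem:characters}, and choosing an ordering of the divisors that makes the triangularity evident.
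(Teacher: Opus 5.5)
Your argument is correct and is essentially the paper's proof: $\Phi_F$ is a homomorphism between $\tau(F)$-dimensional $\mathbb{Q}$-algebras by \autoref{lem:characters}, its matrix $\bigl(\chi_m([C_d])\bigr)_{m,d\mid F}$ is lower triangular with nonzero diagonal entries $d$ once the divisors are ordered by size, and reducedness of $\Cyc\otimes\mathbb{Q}$ follows because it is the directed union of the $\Cyc_F\otimes\mathbb{Q}$. One small correction to your optional Möbius aside: as written it is wrong (for $F=2$ it gives $[C_1]\mapsto\delta_1-\delta_2$, whereas $\Phi_2([C_1])=(1,1)$), and the correct formula for the primitive idempotents is $\Phi_F^{-1}(\delta_d)=\sum_{d\mid m\mid F}\frac{\mu(m/d)}{m}[C_m]$; since you explicitly did not rely on it, the proof itself is unaffected.
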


\begin{proof}
Each coordinate of $\Phi_F$ is a ring homomorphism, so $\Phi_F$ is a
homomorphism between two $\mathbb{Q}$-algebras of the same dimension
$\tau(F)$. It therefore suffices to show $\Phi_F$ is injective, that is,
that the matrix of $\Phi_F$ in the basis $\{[C_d]:d\mid F\}$ of the
source and the coordinate basis of the target, namely
\[
  M=\bigl(\chi_m([C_d])\bigr)_{m\mid F,\;d\mid F}
\]
is invertible. List the divisors of $F$ in standard increasing numerical order. A nonzero entry $$M_{m,d}=\chi_m([C_d])=d$$ requires $d\mid m$, hence
$d\le m$, and therefore $M$ is lower triangular while its diagonal entries are
$M_{m,m}=m$. Thus
\[
  \det M=\prod_{d\mid F} d = F^\frac{\tau(F)}{2}\neq 0,
\]
and $\Phi_F$ is an isomorphism. Passing to the union over $F$, every
element of $\Cyc\otimes\mathbb{Q}$ lies in some $\Cyc_F\otimes\mathbb{Q}$,
which is reduced; hence $\Cyc\otimes\mathbb{Q}$ is reduced.
\end{proof}

Now we can characterize the zero-divisors in $\Cyc$.

\begin{corollary}\label{cor:zd-in-R}
An element $\alpha=\sum_{n} a_n[C_n]\in \Cyc$ is a zero-divisor in $\Cyc$ if
and only if there exists $m\ge 1$ such that
$$\chi_m(\alpha)=\sum_{n\mid m} a_n\,n=0.$$
\end{corollary}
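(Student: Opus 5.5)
The idea is to reduce everything to the finite-dimensional split algebras $\Cyc_F\otimes\mathbb{Q}\cong\mathbb{Q}^{\tau(F)}$ of \autoref{prop:R-split}. Note first that $\Cyc$ is torsion-free, being free abelian on $\{[C_n]\}$. Hence $\Cyc\hookrightarrow\Cyc\otimes\mathbb{Q}$ is injective, and every $\Cyc_F$ embeds in $\Cyc_F\otimes\mathbb{Q}$. Fix $\alpha\in\Cyc$. Choose $F$ with $\alpha\in\Cyc_F$, for instance $F=\lcm\{n: a_n\neq 0\}$. We interpret ``zero-divisor'' as: there exists $0\ne\beta\in\Cyc$ with $\alpha\beta=0$.

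\emph{Sufficiency.} Suppose $\chi_m(\alpha)=0$ for some $m\ge1$. Replace $F$ by $F'=\lcm(F,m)$. Then $\alpha\in\Cyc_{F'}$ and $m\mid F'$. Under $\Phi_{F'}$, the element $\alpha$ corresponds to a tuple whose $m$-th coordinate is $0$. Let $\varepsilon_m\in\Cyc_{F'}\otimes\mathbb{Q}$ be the idempotent $\Phi_{F'}^{-1}(e_m)$, where $e_m$ is the $m$-th coordinate vector. Then $\alpha\varepsilon_m=0$, because products are computed coordinatewise. Clear denominators: pick $N\in\Z_{>0}$ with $\beta:=N\varepsilon_m\in\Cyc_{F'}$. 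This is possible since $\Cyc_{F'}\otimes\mathbb{Q}$ is the $\mathbb{Q}$-span of the lattice $\Cyc_{F'}$. Then $\beta\ne0$ and $\alpha\beta=0$ in $\Cyc_{F'}\otimes\mathbb{Q}$, hence also in $\Cyc$ by injectivity. If one wants $\beta$ explicitly, Möbius inversion on the triangular matrix $M$ gives $\varepsilon_m=\frac1m\sum_{d\mid m}\mu(m/d)\,[C_d]$ up to the normalisation, but this is not needed.

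\emph{Necessity.} Suppose $\alpha\beta=0$ with $\beta\ne0$ in $\Cyc$. Choose a single $F$ such that both $\alpha,\beta\in\Cyc_F$. Since $\beta\ne0$ and $\Phi_F$ is injective, some coordinate satisfies $\chi_m(\beta)\ne0$ with $m\mid F$. The map $\chi_m$ is a ring homomorphism to $\Z$, an integral domain. So $0=\chi_m(\alpha\beta)=\chi_m(\alpha)\chi_m(\beta)$ forces $\chi_m(\alpha)=0$. Finally, unwinding the definition in \autoref{lem:characters} gives $\chi_m(\alpha)=\sum_{n\mid m}a_n n$.

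The only delicate point is the sufficiency step: showing that the rational annihilator $\varepsilon_m$ can be scaled into the integral ring $\Cyc_{F'}$, and remembering to enlarge $F$ so that $m\mid F'$. (If $m\nmid F$, then $\chi_m(\alpha)=\chi_{\gcd(m,F)}(\alpha)$, so one could alternatively replace $m$ by $\gcd(m,F)$.) Both points are handled by the lattice/injectivity remark above, so I expect no real obstacle.
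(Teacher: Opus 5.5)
Your proof is correct and follows essentially the paper's route. Both arguments pass to $\Cyc_F\otimes\mathbb{Q}\cong\mathbb{Q}^{\tau(F)}$ via $\Phi_F$, use torsion-freeness of $\Cyc$ to move witnesses between $\Cyc$ and its rationalization, and handle truncation; you enlarge to $F'=\lcm(F,m)$ where the paper uses $\chi_m|_{\Cyc_F}=\chi_{\gcd(m,F)}$, and your necessity step via $\chi_m(\alpha)\chi_m(\beta)=0$ in $\Z$ is a slightly more direct phrasing.

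One correction to your unused aside: the idempotent is $\varepsilon_m=\sum_{m\mid d\mid F'}\frac{\mu(d/m)}{d}[C_d]$, summing over multiples of $m$. Your formula $\frac1m\sum_{d\mid m}\mu(m/d)[C_d]$ is not even proportional to it; for $m=2$ it has $\chi_1=-\tfrac12\neq 0$.
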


\begin{proof}
Choose $F$ with $\alpha\in \Cyc_F$. As $\Cyc$ is free abelian, hence torsion-free, $\alpha$ is a zero-divisor in $\Cyc$ if and only if it is one in $\Cyc\otimes\mathbb{Q}$, and a witness may be taken in some $\Cyc_{F'}\otimes\mathbb{Q}$ with $F\mid F'$. Under the isomorphism $\Phi_{F'}$ of \autoref{prop:R-split}, zero-divisors of the product $\prod_{m\mid F'}\mathbb{Q}$ are exactly the tuples with a vanishing coordinate, so $\alpha$ is a zero-divisor if and only if $\chi_m(\alpha)=0$ for some $m\mid F'$. Finally, for any $m$ one has $\chi_m|_{\Cyc_F}=\chi_{\gcd(m,F)}$ because $n\mid F$ forces $n\mid m\Leftrightarrow n\mid\gcd(m,F)$; thus the existence of a vanishing coordinate is independent of the truncation, and is equivalent to $\chi_m(\alpha)=0$ for some $m\ge1$.
\end{proof}

\autoref{cor:zd-in-R} characterizes the zero-divisors of $\Cyc$ (inside $\Cyc$). The next lemma extends this characterization to the entire Grothendieck ring: a relation that does not occur within $\Cyc$ cannot arise after allowing the test element to range over arbitrary elements of $\G(\Evol_\K)$. The argument relies on the fact that elements of $\Cyc$ are integral over $\mathbb{Z}$, together with the torsion-freeness of $\G(\Evol_\K)$.

\begin{lemma}\label{lem:transfer}
Let $\alpha\in \Cyc$.
\begin{enumerate}[label={\rm (\arabic{*})}]
\item\label{lem:transfer_1} If $\chi_m(\alpha)\neq 0$ for every $m\ge 1$, then $\alpha$ is a
  non-zero-divisor in $\G(\Evol_\K)$.
\item\label{lem:transfer_2} If $\chi_m(\alpha)=0$ for some $m\ge 1$, then $\alpha$ is a
  zero-divisor in $\G(\Evol_\K)$, with a witness already in $\Cyc$.
\end{enumerate}
\end{lemma}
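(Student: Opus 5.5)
Part (2) is the easy direction, and I would do it first. Suppose $\chi_m(\alpha)=0$ for some $m\ge1$. By \autoref{cor:zd-in-R}, $\alpha$ is a zero-divisor in $\Cyc$, so there is a nonzero $\beta\in\Cyc$ with $\alpha\beta=0$. Since $\Cyc\subseteq\G(\Evol_\K)$ and $\beta\neq0$ there as well, this same $\beta$ witnesses that $\alpha$ is a zero-divisor in $\G(\Evol_\K)$.

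For a concrete witness, choose $F$ with $\alpha\in\Cyc_F$ and $m\mid F$; this is allowed by the truncation remark $\chi_m|_{\Cyc_F}=\chi_{\gcd(m,F)}$. Let $\beta$ be a nonzero integer multiple of $\Phi_F^{-1}(e_m)$, where $e_m$ is the idempotent of $\prod_{m\mid F}\mathbb{Q}$ supported at $m$. Scaling by a suitable integer clears denominators and places $\beta$ in $\Cyc_F$. Then $\Phi_F(\alpha\beta)=\chi_m(\alpha)\,e_m\cdot c=0$ for that integer $c$, and $\Phi_F$ is injective, so $\alpha\beta=0$.

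For part (1), assume $\chi_m(\alpha)\neq0$ for every $m$, and suppose $\alpha\gamma=0$ with $\gamma\in\G(\Evol_\K)$. The plan is to show that $\alpha$ divides a nonzero integer inside $\Cyc$. Fix $F$ with $\alpha\in\Cyc_F$. Multiplication by $\alpha$ on the free $\mathbb{Z}$-module $\Cyc_F$ of rank $\tau(F)$ has characteristic polynomial
$$p(t)=\prod_{m\mid F}\bigl(t-\chi_m(\alpha)\bigr),$$
because after tensoring with $\mathbb{Q}$ it is diagonalized by $\Phi_F$ with eigenvalues $\chi_m(\alpha)$. By Cayley--Hamilton, $p(\alpha)=0$ in $\Cyc_F$; this is the integrality of $\alpha$ over $\mathbb{Z}$. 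The constant term is
$$p(0)=\pm\prod_{m\mid F}\chi_m(\alpha)=:\pm N,$$
and $N\neq0$ by hypothesis. Writing $p(t)=t\,q(t)\pm N$ with $q\in\mathbb{Z}[t]$, we get
$$\alpha\,q(\alpha)=\mp N\cdot 1 \quad\text{in }\Cyc .$$

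Multiplying this identity by $\gamma$ gives
$$N\gamma=\mp q(\alpha)\,\alpha\gamma=0$$
in $\G(\Evol_\K)$. Since $(\G(\Evol_\K),+)$ is free abelian on the indecomposable classes, it is torsion-free, so $\gamma=0$. Hence $\alpha$ is a non-zero-divisor.

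The step that needs the most care is the identification of the characteristic polynomial. One has to check that the eigenvalues of multiplication by $\alpha$ on $\Cyc_F\otimes\mathbb{Q}$ are exactly the $\chi_m(\alpha)$ for $m\mid F$. This holds because $\Phi_F$ is a $\mathbb{Q}$-algebra isomorphism (\autoref{prop:R-split}) onto a product of copies of $\mathbb{Q}$, so multiplication by $\alpha$ acts on the $m$-th factor as scalar multiplication by $\chi_m(\alpha)$. Only divisors $m$ of $F$ occur, which is sufficient since the hypothesis covers all $m$. After that, the rest is formal.
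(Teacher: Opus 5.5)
Your proposal is correct and follows the paper's argument. In (1) you produce $\delta\in\Cyc_F$ with $\alpha\delta$ a nonzero integer and conclude by torsion-freeness of $\G(\Evol_\K)$; in (2) you clear denominators in a witness built from $\Phi_F$. The only difference is cosmetic: you obtain the cofactor as $q(\alpha)$ from the characteristic polynomial via Cayley--Hamilton, which makes $N=\prod_{m\mid F}\chi_m(\alpha)$ explicit and matches the ``integral over $\mathbb{Z}$'' remark before the lemma, whereas the paper inverts $\alpha$ in $\Cyc_F\otimes\mathbb{Q}\cong\mathbb{Q}^{\tau(F)}$ and clears denominators.
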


\begin{proof}
Fix $F$ with $\alpha\in \Cyc_F$.

\ref{lem:transfer_1}: If no character vanishes on $\alpha$, then $\Phi_F(\alpha)$ has all
coordinates nonzero, so $\alpha$ is a unit in
$\Cyc_F\otimes\mathbb{Q}\cong\mathbb{Q}^{\tau(F)}$. Let
$\beta\in \Cyc_F\otimes\mathbb{Q}$ be its inverse and clear denominators:
there exist an integer $c\ge 1$ and an element $\delta=c\beta\in \Cyc_F$
with
\[
  \alpha\,\delta=c\quad\text{in}\quad \Cyc_F\subseteq\G(\Evol_\K) .
\]
Now suppose $\alpha W=0$ for some $W\in\G(\Evol_\K)$. Then
\[
  cW=(\alpha\delta)W=\delta(\alpha W)=0,
\]
and since $\G(\Evol_\K)$ is free abelian, hence torsion-free, and $c\neq 0$, we
get $W=0$. Thus $\alpha$ is a non-zero-divisor in $\G(\Evol_\K)$.

\ref{lem:transfer_2}: If $\chi_{m_0}(\alpha)=0$, then $\Phi_F(\alpha)$ has a vanishing
coordinate, so $\alpha$ is a zero-divisor in
$\Cyc_F\otimes\mathbb{Q}$. Clearing denominators in an identity
$\alpha\eta_0=0$ with $0\neq\eta_0\in \Cyc_F\otimes\mathbb{Q}$ yields
$0\neq\eta\in \Cyc_F$ with $\alpha\eta=0$ in $\Cyc_F\subseteq\G(\Evol_\K)$.
As $\eta\neq 0$ in $\G(\Evol_\K)$, $\alpha$ is a zero-divisor there.
\end{proof}

The key point in the proof of \autoref{lem:transfer}.\ref{lem:transfer_1} is that the test element $W$ may be an arbitrary element of $\G(\Evol_\K)$: it need not be cyclic, nor even a direct sum of indecomposable elements of bounded balance. Nevertheless, the relation
$$\alpha\delta=c\cdot 1,$$ 
which holds in $\Cyc$, implies that $cW=0$.
This is illustrated in the following example.

\begin{example}
Let $\alpha=[\K\oplus C_2]=1+[C_2]$. Then, taking $\delta=3[\K]-[C_2]=3-[C_2],$ one has
\[
  \bigl(1+[C_2]\bigr)\bigl(3-[C_2]\bigr)=3,
\]
since $[C_2]^2=[C_2\otimes C_2]=[C_2\oplus C_2]=2[C_2]$. So any $W$ satisfying
$(1+[C_2])W=0$ also satisfies $3W=0$ and hence $W=0$. Therefore, $[\K\oplus C_2]$  is not a zero-divisor in $\G(\Evol_\K)$, even though its summand $C_2$ itself is a
zero-divisor.
\end{example}

Finally, we can describe the non-degenerate algebras whose optimal direct-sum decomposition into indecomposable evolution ideals involve just cyclic algebras, and give rise to zero-divisors in $\G(\Evol_\K)$.

\begin{theorem}\label{thm:cyclic-conjecture}
Let $X=\bigoplus_{i=1}^{r}C_{n_i}$ be a finite direct sum of cyclic
algebras, with $n_i\ge 1$. Then $[X]$ is a zero-divisor in $\G(\Evol_\K)$ if
and only if $n_i>1$ for every $i$; equivalently, if and only if every
indecomposable direct summand of $X$ has balance greater than $1$. In
particular, \autoref{conj:balance} holds for all direct sums of cyclic algebras.
\end{theorem}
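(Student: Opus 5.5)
The plan is to reduce everything to the character criterion of \autoref{lem:transfer}. Write $[X]=\alpha=\sum_{n}a_n[C_n]\in\Cyc$, where $a_n=\#\{i: n_i=n\}\ge 0$. By \autoref{lem:transfer}, $[X]$ is a zero-divisor in $\G(\Evol_\K)$ if and only if $\chi_m(\alpha)=0$ for some $m\ge 1$. Parts \ref{lem:transfer_1} and \ref{lem:transfer_2} together give both directions, and the witness can even be taken in $\Cyc$. It therefore suffices to decide when some character vanishes on $\alpha$.

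The key observation is that, because all coefficients $a_n$ are nonnegative, each
\[
\chi_m(\alpha)=\sum_{n\mid m}a_n\,n=\sum_{i:\,n_i\mid m} n_i
\]
is a sum of positive integers. It vanishes exactly when no $n_i$ divides $m$.

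Suppose every $n_i>1$. Then take $m=1$: no $n_i$ divides $1$, so $\chi_1(\alpha)=0$, and \autoref{lem:transfer}\ref{lem:transfer_2} shows that $[X]$ is a zero-divisor. This direction also follows from \autoref{thm:zero-divisors}, since $\b(C_n)=n$.

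Conversely, suppose some $n_{i_0}=1$, so that $\K$ is a summand of $X$. For every $m\ge 1$ we have $1\mid m$, so $\chi_m(\alpha)\ge 1>0$. Hence no character vanishes, and \autoref{lem:transfer}\ref{lem:transfer_1} shows that $[X]$ is a non-zero-divisor in all of $\G(\Evol_\K)$.

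Finally, we identify the condition with balance. The summands $C_{n_i}$ are indecomposable, and by the unique decomposition of \cite[Theorem 5.11]{CabreraSilesVelasco2016} they are precisely the indecomposable summands of $X$. Their balances are $\b(C_{n_i})=n_i$, because the cycle in $\Gamma(C_n)$ has balance $n$ and is the only cycle up to traversal. So "$n_i>1$ for all $i$" is exactly "every indecomposable summand has balance $>1$", which is \autoref{conj:balance} for these algebras.

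There is no real obstacle once \autoref{lem:transfer} is available; the only point needing care is the positivity of the coefficients, which is what makes $\chi_m(\alpha)=0$ equivalent to a divisibility condition.
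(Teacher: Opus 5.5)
Your proposal is correct and matches the paper's proof. You write $[X]$ in $\Cyc$, observe that $\chi_m([X])=\sum_{n_i\mid m}n_i$ is a sum of positive integers, and then apply \autoref{lem:transfer}: part \ref{lem:transfer_1} when some $n_i=1$, and part \ref{lem:transfer_2} with $m=1$ when every $n_i>1$, using $\b(C_n)=n$ to translate the condition into balance.
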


\begin{proof}
The indecomposable summands of $X$ are the cyclic algebras $C_{n_i}$,
which have balance $b(C_{n_i})=n_i$; so the two formulations of the
condition coincide. Write $\alpha=[X]=\sum_{i}[C_{n_i}]\in \Cyc$, so that
for every $m\ge 1$
\[
  \chi_m(\alpha)=\sum_{\{i\,:\,n_i\mid m\}} n_i\;\ge 0 ,
\]
a sum of positive integers, which vanishes exactly when no $n_i$ divides
$m$.

If some $n_{i_0}=1$, then $n_{i_0}\mid m$ for every $m$, so
$\chi_m(\alpha)\ge 1>0$ for all $m$. Therefore, by \autoref{lem:transfer}.\ref{lem:transfer_1},
$\alpha$ is a non-zero-divisor in $\G(\Evol_\K)$.

If instead $n_i>1$ for all $i$, then no $n_i$ divides $m=1$, so
$\chi_1(\alpha)=0$; by \autoref{lem:transfer}.\ref{lem:transfer_2}, $\alpha$ is a
zero-divisor in $\G(\Evol_\K)$, with a witness in $\Cyc$.
\end{proof}

\begin{remark}\label{rmk:correction}
Observe that \autoref{thm:cyclic-conjecture} clarifies the statement of \autoref{rem:reduce-indec}. Indeed,
\autoref{thm:cyclic-conjecture} handles the necessity direction when all summands are cyclic, without invoking the reduction given by \autoref{lem:lovasz}. Instead, it works directly in $\G(\Evol_\K)$ via the balance characters of $\Cyc$ introduced in \autoref{lem:characters}.
\end{remark}


\begin{thebibliography}{99}

\bibitem{BoudiCabreraSiles2022}
N.~Boudi, Y.~Cabrera Casado, and M.~Siles Molina,
\emph{Natural families in evolution algebras},
Publ.\ Mat.\ \textbf{66} (2022), no.~1, 159--181.

\bibitem{CabreraMartinMartinTocino2023}
Y.~Cabrera Casado, D.~Mart\'in Barquero, C.~Mart\'in Gonz\'alez, A.~Tocino,
\emph{Tensor product of evolution algebras},
Mediterr. J. Math. \textbf{20} (2023), no.~1, Paper No.~43.

\bibitem{CabreraSilesVelasco2016}
Y.~Cabrera Casado, M.~Siles Molina, M.~V.~Velasco,
\emph{Evolution algebras of arbitrary dimension and their decompositions},
Linear Algebra Appl. \textbf{495} (2016), 122--162.

\bibitem{CabreraSilesVelasco2017}
Y.~Cabrera Casado, M.~Siles Molina, M.~V.~Velasco,
\emph{Classification of three-dimensional evolution algebras},
Linear Algebra Appl. \textbf{524} (2017), 68--108.

\bibitem{ChenChen}
S.~Chen, X.~Chen,
\emph{Weak connectedness of tensor product of digraphs},
Discrete Appl. Math. \textbf{185} (2015), 52--58.

\bibitem{CostoyaLigourasTocinoViruel2022}
C.~Costoya, P.~Ligouras, A.~Tocino, A.~Viruel,
\emph{Regular evolution algebras are universally finite},
Proc. Amer. Math. Soc. \textbf{150} (2022), no.~3, 919--925.

\bibitem{CostoyaMunozTocinoViruel2023}
C.~Costoya, V.~Mu\~noz, A.~Tocino, A.~Viruel,
\emph{Automorphism groups of Cayley evolution algebras},
Rev. R. Acad. Cienc. Exactas F\'is. Nat. Ser. A Mat. RACSAM
\textbf{117} (2023), no.~2, Paper No.~82.

\bibitem{ElduqueLabra2015}
A.~Elduque, A.~Labra,
\emph{Evolution algebras and graphs},
J. Algebra Appl. \textbf{14} (2015), no.~7, 1550103.

\bibitem{ElduqueLabra2019}
A.~Elduque, A.~Labra,
\emph{Evolution algebras, automorphisms, and graphs},
Linear Multilinear Algebra \textbf{69} (2021), no.~1, 1--12.



\bibitem{Lovasz}
L.~Lov\'asz,
\emph{On the cancellation law among finite relational structures},
Period. Math. Hungar. \textbf{1} (1971), no.~2, 145--156.

\bibitem{Tian2008}
J.~P.~Tian,
\emph{Evolution Algebras and their Applications},
Lecture Notes in Mathematics \textbf{1921}, Springer, Berlin, 2008.

\bibitem{TianVojtechovsky2006}
J.~P.~Tian, P.~Vojt\v{e}chovsk\'y,
\emph{Mathematical concepts of evolution algebras in non-Mendelian
genetics}, Quasigroups Related Systems \textbf{14} (2006), no.~1,
111--122.

\end{thebibliography}
\end{document}